%%%%%%%%%%%%%%%%%%%%%%%%%%%%%%%%%%%%%%%%%%%%%%%%%%%%%%%%%%%%%%%%%%%%%%
%%% Homomorphisms and generically τ-regular components for  
%%%               skewed-gentle algebras
%%%                     Christof Geiß
%%%%%%%%%%%%%%%%%%%%%%%%%%%%%%%%%%%%%%%%%%%%%%%%%%%%%%%%%%%%%%%%%%%%%%
%%%  This is essentially a copy of skewg-JALG-R2.tex
%%%            FINAL VERSION
%%%%%%%%%%%%%%%%%%%%%%%%%%%%%%%%%%%%%%%%%%%%%%%%%%%%%%%%%%%%%%%%%%%%%% 

\documentclass[11pt]{amsart}
\allowdisplaybreaks
\usepackage[unicode]{hyperref}
\usepackage{cleveref}
\usepackage{mathtools}
\usepackage[utf8]{inputenc}
\usepackage{xcolor}
\usepackage{graphicx}
\usepackage{url}
\usepackage{tikz}
\usetikzlibrary{intersections}
\usetikzlibrary {decorations.pathmorphing, decorations.pathreplacing, decorations.shapes}
\usepackage[all]{xy}
\usepackage{array}
\usepackage{bbm}  % font with double struck 1
\usepackage{amssymb}
\usepackage{xcolor}
\textheight 230mm
\textwidth 150mm
\hoffset -16mm
\voffset -16mm

%%%%%%%%%%%%%%%%%%%%%%%%%%%%%%%%%%%%%%%%%%%%%%%%%%%%%%%%%%%%%%%%%%%%%%%%%
%% This code is a slight modification of Hendrik Vogt's \widebar       %%
%% See: https://tex.stackexchange.com/questions/16337                  %%
%% https://tex.stackexchange.com/questions/412815/double-bar-overline  %%
%%%%%%%%%%%%%%%%%%%%%%%%%%%%%%%%%%%%%%%%%%%%%%%%%%%%%%%%%%%%%%%%%%%%%%%%% 
\makeatletter
\let\save@mathaccent\mathaccent
\newcommand*\if@single[3]{%
  \setbox0\hbox{${\mathaccent"0362{#1}}^H$}%
  \setbox2\hbox{${\mathaccent"0362{\kern0pt#1}}^H$}%
  \ifdim\ht0=\ht2 #3\else #2\fi
  }
%The bar will be moved to the right by a half of \macc@kerna, which is computed by amsmath:
\newcommand*\rel@kern[1]{\kern#1\dimexpr\macc@kerna}
%If there's a superscript following the bar, then no negative kern may follow the bar;
%an additional {} makes sure that the superscript is high enough in this case:
\newcommand*\wideaccent[2]{\@ifnextchar^{{\wide@accent{#1}{#2}{0}}}{\wide@accent{#1}{#2}{1}}}
%Use a separate algorithm for single symbols:
\newcommand*\wide@accent[3]{\if@single{#2}{\wide@accent@{#1}{#2}{#3}{1}}{\wide@accent@{#1}{#2}{#3}{2}}}
\newcommand*\wide@accent@[4]{%
  \begingroup
  \def\mathaccent##1##2{%
%Enable nesting of accents:
    \let\mathaccent\save@mathaccent
%If there's more than a single symbol, use the first character instead (see below):
    \if#42 \let\macc@nucleus\first@char \fi
%Determine the italic correction:
    \setbox\z@\hbox{$\macc@style{\macc@nucleus}_{}$}%
    \setbox\tw@\hbox{$\macc@style{\macc@nucleus}{}_{}$}%
    \dimen@\wd\tw@
    \advance\dimen@-\wd\z@
%Now \dimen@ is the italic correction of the symbol.
    \divide\dimen@ 3
    \@tempdima\wd\tw@
    \advance\@tempdima-\scriptspace
%Now \@tempdima is the width of the symbol.
    \divide\@tempdima 10
    \advance\dimen@-\@tempdima
%Now \dimen@ = (italic correction / 3) - (Breite / 10)
    \ifdim\dimen@>\z@ \dimen@0pt\fi
%The bar will be shortened in the case \dimen@<0 !
    \rel@kern{0.6}\kern-\dimen@
    \if#41
      #1{\rel@kern{-0.6}\kern\dimen@\macc@nucleus\rel@kern{0.4}\kern\dimen@}%
      \advance\dimen@0.4\dimexpr\macc@kerna
%Place the combined final kern (-\dimen@) if it is >0 or if a superscript follows:
      \let\final@kern#3%
      \ifdim\dimen@<\z@ \let\final@kern1\fi
      \if\final@kern1 \kern-\dimen@\fi
    \else
      #1{\rel@kern{-0.6}\kern\dimen@#2}%
    \fi
  }%
  \macc@depth\@ne
  \let\math@bgroup\@empty \let\math@egroup\macc@set@skewchar
  \mathsurround\z@ \frozen@everymath{\mathgroup\macc@group\relax}%
  \macc@set@skewchar\relax
  \let\mathaccentV\macc@nested@a
%The following initialises \macc@kerna and calls \mathaccent:
  \if#41
    \macc@nested@a\relax111{#2}%
  \else
%If the argument consists of more than one symbol, and if the first token is
%a letter, use that letter for the computations:
    \def\gobble@till@marker##1\endmarker{}%
    \futurelet\first@char\gobble@till@marker#2\endmarker
    \ifcat\noexpand\first@char A\else
      \def\first@char{}%
    \fi
    \macc@nested@a\relax111{\first@char}%
  \fi
  \endgroup
}
\makeatother

\newcommand\doubleoverline[1]{\overline{\overline{#1}}}
\newcommand\widebar{\wideaccent\overline}
\newcommand\widebarbar{\wideaccent\doubleoverline}

\newtheorem{Thm}{Theorem}[section]
\newtheorem{Prop}[Thm]{Proposition}
\newtheorem{Cor}[Thm]{Corollary}
\newtheorem{Lem}[Thm]{Lemma}
\theoremstyle{definition}
\newtheorem{Rem}[Thm]{Remark}
\newtheorem{Def}[Thm]{Definition}
\newtheorem{Expl}[Thm]{Example}
\numberwithin{equation}{section} 

\newcommand{\blux}[1]{\textcolor{blue}{#1}} 
\newcommand{\cyax}[1]{\textcolor{cyan}{#1}}
\newcommand{\teax}[1]{\textcolor{teal}{#1}}
\newcommand{\redx}[1]{\textcolor{red}{#1}}
\newcommand{\orax}[1]{\textcolor{orange}{#1}}
\newcommand{\purx}[1]{\textcolor{purple}{#1}}
\newcommand{\grex}[1]{\textcolor{gray}{#1}}

\newcommand{\ebrace}[1]{\langle #1\rangle}
\newcommand{\abs}[1]{\lvert #1\rvert}

\newcommand{\bd}{\mathbf{d}}
 
\newcommand{\bg}{\mathbf{g}}

\newcommand{\bp}{\mathbf{p}}
\newcommand{\bq}{\mathbf{q}}

\newcommand{\bs}{\mathbf{s}}
\newcommand{\bu}{\mathbf{u}}
\newcommand{\bv}{\mathbf{v}}
\newcommand{\bw}{\mathbf{w}}
\newcommand{\bx}{\mathbf{x}}
\newcommand{\by}{\mathbf{y}}
\newcommand{\bz}{\mathbf{z}}

\newcommand{\hbx}{\widehat{\bx}}
\newcommand{\hby}{\widehat{\by}}

\newcommand{\bbx}{\widebar{\bx}}
\newcommand{\bby}{\widebar{\by}}
\newcommand{\bbz}{\widebar{\bz}}

\newcommand{\bbbx}{\widebarbar{\bx}}
\newcommand{\bbby}{\widebarbar{\by}}

\newcommand{\tbp}{\tilde{\bp}}
\newcommand{\tbq}{\tilde{\bq}}
\newcommand{\tbs}{\tilde{\bs}}

\newcommand{\tH}{\widetilde{\text{h}}}

\newcommand{\cH}{\mathcal{H}}
\newcommand{\cK}{\mathcal{K}}
\newcommand{\cL}{\mathcal{L}}
\newcommand{\cC}{\mathcal{C}}

\newcommand{\cO}{\mathcal{O}}
\newcommand{\cP}{\mathcal{P}}
\newcommand{\cQ}{\mathcal{Q}}
\newcommand{\cR}{\mathcal{R}}

\newcommand{\cT}{\mathcal{T}}

\newcommand{\CC}{\mathbb{C}}

\newcommand{\NN}{\mathbb{N}}
\newcommand{\ZZ}{\mathbb{Z}}
\newcommand{\II}{\mathbbm{1}}

\newcommand{\alp}{\alpha}     
\newcommand{\bet}{\beta}
\newcommand{\gam}{\gamma}     
\newcommand{\del}{\delta}
\newcommand{\eps}{\varepsilon}
\newcommand{\kap}{\kappa}
\newcommand{\lam}{\lambda}    
\newcommand{\vph}{\varphi}
\newcommand{\sig}{\sigma}
\newcommand{\Del}{\Delta}

\newcommand{\Sig}{\Sigma}     
\newcommand{\bSig}{\mathbf{\Sig}}
\newcommand{\ome}{\omega}

\newcommand{\df}{\colon}      
\newcommand{\ra}{\rightarrow}
\newcommand{\Iff}{\Longleftrightarrow}

\newcommand{\tikzcircle}[2][red,fill=red]{\tikz[baseline=-0.5ex]\draw[#1,radius=#2, very thick] (0,0) circle ;}%
\newcommand{\PT}{\bullet}
\newcommand{\bPT}{\blux{\bullet}}
\newcommand{\pbPT}{\tikzcircle[purple, fill=blue]{2.5pt}}
\newcommand{\tbPT}{\tikzcircle[teal, fill=blue]{2.5pt}}
\newcommand{\obPT}{\tikzcircle[orange, fill=blue]{2.5pt}}

\newcommand{\rPT}{\redx{\bullet}}
\newcommand{\crPT}{\tikzcircle[cyan, fill=red]{2.5pt}}
\newcommand{\orPT}{\tikzcircle[orange, fill=red]{2.5pt}}
\newcommand{\prPT}{\tikzcircle[purple, fill=red]{2.5pt}}
\newcommand{\trPT}{\tikzcircle[teal, fill=red]{2.5pt}}
\newcommand{\cPT}{\tikzcircle[cyan, fill=white]{2.5pt}}
\newcommand{\tPT}{\tikzcircle[teal, fill=white]{2.5pt}}
\newcommand{\oPT}{\tikzcircle[orange, fill=white]{2.5pt}}
\newcommand{\pPT}{\tikzcircle[purple, fill=white]{2.5pt}}

\newcommand{\sfA}{\mathsf{A}}
\newcommand{\sfD}{\mathsf{D}}
\newcommand{\sftA}{\widetilde{\sfA}}
\newcommand{\sftD}{\widetilde{\sfD}}
\newcommand{\sfP}{\mathsf{P}}
\newcommand{\sfT}{\mathsf{T}}

\newcommand{\us}{\underline{s}}
\newcommand{\ut}{\underline{t}}

\newcommand{\tw}{{\tilde{w}}}

\newcommand{\tbw}{{\widetilde{\bw}}}
\newcommand{\tbx}{{\widetilde{\bx}}}
\newcommand{\tby}{{\widetilde{\by}}}
\newcommand{\tbz}{{\widetilde{\bz}}}

\newcommand{\tQ}{\widetilde{Q}}
\newcommand{\tcR}{\widetilde{\cR}}
\newcommand{\uQ}{{\underline{Q}}}
\newcommand{\ord}{{\mathrm{ord}}}
\newcommand{\spe}{{\mathrm{sp}}}
\newcommand{\Qsp}{{Q_1^\spe}}
\newcommand{\Qord}{{Q_1^\ord}}
\newcommand{\Qspv}{{Q_0^\spe}}
\newcommand{\Qordv}{{Q_0^\ord}}
\newcommand{\huQ}{{\widehat{\uQ}}}
\newcommand{\hQ}{\widehat{Q}}
\newcommand{\hQord}{\hQ_1^{\mathrm{ord}}}
\newcommand{\hQsp}{\hQ_1^{\mathrm{sp}}}
\newcommand{\tuQ}{\widetilde{\uQ}}
\newcommand{\uH}{\underline{H}}
\newcommand{\Hord}{{H_1^{\mathrm{ord}}}}
\newcommand{\Hsp}{{H_1^{\mathrm{sp}}}}
\newcommand{\Hred}{{H_0^{\mathrm{red}}}}
\newcommand{\Hora}{{H_0^{\mathrm{ora}}}}
\newcommand{\Hpur}{{H_0^{\mathrm{pur}}}}
\newcommand{\Hblu}{{H_0^{\mathrm{blu}}}}
\newcommand{\Hcya}{{H_0^{\mathrm{cya}}}}
\newcommand{\Htea}{{H_0^{\mathrm{tea}}}}

\newcommand{\St}{\operatorname{St}}
\newcommand{\Ba}{\operatorname{Ba}}
\newcommand{\pBa}{\operatorname{pBa}}
\newcommand{\Adm}{\operatorname{Adm}}
\newcommand{\AdmSt}{\operatorname{AdmSt}}
\newcommand{\AdmBa}{\operatorname{AdmBa}}
\newcommand{\MAdm}{{\Adm}^{(\Ka)}}
\newcommand{\type}{\operatorname{type}}
\newcommand{\val}{\operatorname{val}}
\newcommand{\wt}{\operatorname{wt}}
\newcommand{\Diag}{\operatorname{Diag}}

\newcommand{\Ka}{k}

\newcommand{\End}{\operatorname{End}}
\newcommand{\Hom}{\operatorname{Hom}}

\newcommand{\GL}{\operatorname{GL}}
\newcommand{\kar}{\operatorname{char}}
      
\newcommand{\id}{\operatorname{id}}

\newcommand{\Irr}{\operatorname{Irr}}
\newcommand{\tIrr}{\Irr^{\tau}}

\newcommand{\tIrrind}{\operatorname{Irr}_{\mathrm{ind}}^\tau}
\newcommand{\codim}{\operatorname{codim}}

\newcommand{\dimv}{\underline{\dim}}         %OK
\newcommand{\lmd}{\operatorname{-mod}}
\newcommand{\ind}{\operatorname{-ind}}
\newcommand{\rep}{\operatorname{rep}}

%%%%%%% Macros for surfaces
       % surface
\newcommand{\Pu}{\mathbb{P}}   % punctures
\newcommand{\Ma}{\mathbb{M}}   % narkded points
 % boundary of surface

\title[Homomorphisms for skewed-gentle algebras]{On Homomorphisms and Generically $\tau$-regular Components for Skewed-Gentle Algebras}

\author{Christof Geiß}
\address{Instituto de Matemáticas, UNAM, Ciudad de México, C.P. 04510, MEXICO} 
\email{christof.geiss@im.unam.mx}
\date{October 14,  2025}

\begin{document}
\begin{center}
\emph{Dedicated to the Memory of María del Rocío Patiño Maya, 1965 -- 2021}
\end{center}
\begin{abstract}
  Let $\Ka$ be an algebraically closed field with $\kar(\Ka)\neq 2$,
  and $A$ a skewed-gentle
  $\Ka$-algebra.   In this case,    Crawley-Boevey's description of
  the indecomposable $A$-modules becomes particularly easy. This allows us
  to provide an explicit basis for the homomorphisms between
  any two indecomposable representations in terms of the corresponding
  admissible words in the sense of Qiu and Zhou~\cite{QZ17}.
  Previously~\cite{Ge99}, such a basis was only available when no asymmetric band modules were involved.
  We also extend a relaxed   version of fringing and kisses
  from Brüstle et al.~\cite{BY20}
  to the setting of skewed-gentle algebras.  With this at hand, we obtain
  convenient  formulae for the E-invariant and g-vector for indecomposable
  $A$-modules,  similar to the known expressions for
  gentle algebras, see also~\cite{PPP21}. Note however, that we allow in
  our context also band-modules. As an application, we describe the
  indecomposable, generically $\tau$-regular irreducible components of the representation varieties of $A$ as well as the generic values of the
  E-invariant between them in terms of tagged admissible words.
\end{abstract}
\maketitle
\section{Introduction}
\subsection{Context}
W. Crawley-Boevey~\cite{CB89b} introduced clans and clannish algebras and gave
a parametrization of their indecomposable representations
over any field with at least three elements.  Later, V. Bondarenko~\cite{Bo91}
and B. Deng~\cite{Deng00} gave a description that works over any field. 
This description of the indecomposable representations,
in terms of symmetric resp.~asymmetric strings and bands, 
is in general notoriously more involved than the well-known
classification of modules for string algebras~\cite{BR87}.  Motivated by
the works~\cite{CB89a} and~\cite{Kr91} we gave in~\cite{Ge99}
a partial answer to the description of homomorphisms between representations
of clans in the description of Deng. This was good enough to describe their
Auslander-Reiten theory.
However, the description of homomorphisms between indecomposable modules,
which involve asymmetric bands in~\cite{Ge99} is incomplete, due to
combinatorial difficulties.

As pointed out by Crawley-Boevey~\cite{CB89b},  the indecomposable
representations of clannish algebras can be described easily in terms
of certain clans.
Skewed-gentle  algebras are a special case of clannish algebras.
Motivated by the properties of gentle algebras, they  were introduced
in~\cite{GP99} because their derived category can be described in terms
of the representations of a  clannish algebra. So, skewed-gentle algebras are
in particular derived tame.  However, unlike gentle algebras, the class
of skewed-gentle algebras is not closed under derived equivalence.

Let $\bSig=(\Sig,\Ma,\Pu)$ a marked surface in the sense of~\cite{FST08}
with marked points $\Ma\subset\partial\Sig$ and punctures
$\Pu\subset\Sig\setminus\partial\Sig$. It is straightforward to construct in case $\Ma\neq\emptyset$ a tagged triangulation
$T$ of signature $0$.  With the non-degenerate Labardini potential~\cite{LF09}
$W(T)$ for the quiver $Q(T)$ of $T$ the Jacobian algebra
$A(T):=\cP_\CC(Q(T), W(T))$ is easily identified as a skewed-gentle algebra. 
In modern language, Qiu and Zhu outlined in~\cite{QZ17} that the tagged arcs for $\bSig$ correspond naturally to the decorated, indecomposable
$\tau$-rigid  representations of $A(T)$, and the E-invariant between two
such modules is $0$ if and only if the (tagged) intersection number between
the corresponding arcs vanishes.
See also~\cite{AP17} and~\cite{GLF23} for more details.

The main motivation for the present work is to show in a series of
follow-up papers~\cite{GLFW23a}, \cite{GLFW23b}
in collaboration with Daniel Labardini-Fragoso and Jon Wilson, that the generic basis of the cluster algebra of such a marked surface $\bSig$ with
$\abs{\Ma}\geq 2$ can be identified with the bangle basis from~\cite{MSW13}.
In order to attack this problem,
we need as a starting point a convenient description of the indecomposable,
generically $\tau$-regular irreducible components of the representation
varieties of a skewed-gentle algebra, and we also need to understand how
those components add up to arbitrary $\tau$-regular irreducible components
(see~\cite{BobS25} and Section~\ref{ssec:tau-reg} for the history of the name).

In other words, we want for each skewed-gentle algebra a combinatorial
description of its component graph of generically $\tau$-regular components
in the sense of~\cite[Sec.~6]{CLS15}.
To this end,  we need to identify 
 \emph{all} indecomposable, generically $\tau$-regular irreducible
 components and calculate the generic value of the $E$-invariant between them.  Recall that 
 $E_A(X, Y):= \dim\Hom_A(X,\tau Y)+\dim\Hom_A(Y,\tau X)$.
Because of the above-mentioned incomplete
description of the homomorphisms between representations of skewed-gentle algebras in~\cite{Ge99}, some extra work is needed.
In the present manuscript we overcome those difficulties by describing the homomorphisms for a particularly nice special case of Crawley-Boevey's description of the indecomposable modules of skewed-gentle algebras, which works only over fields with characteristic different from 2.

We take here  the opportunity
to extend the notions of fringing and kisses for gentle algebras
from~\cite{BY20} to skewed-gentle algebras. This  allows us to calculate
the (generic) E-invariant between (irreducible components of) representations
of skewed-gentle algebras efficiently.  This description will
turn out to be convenient for the interpretation of the generic E-invariant
as a kind of intersection number.

\subsection{Overview} 
In Section~\ref{sec:PolQ} we introduce (admissible) polarized quivers and, as a special case, skewed-gentle polarized quivers.  The latter ones are the combinatorial avatars of the 
above-mentioned  skewed-gentle algebras. 
Skewed-gentle polarized quivers contain a bit of redundant information if one
just wants to recover skewed-gentle algebras. However, the involved choices are convenient for our combinatorial study and can be traced back at least to the work of Butler and Ringel~\cite{BR87}. 
In the case of the Jacobian algebras coming from a tagged triangulation $T=(\tau_1,\ldots,\tau_n)$ of signature $0$,
the corresponding skewed-polarized quiver $\uQ(T)$ retains the information about the orientation of each of the arcs $\tau_i$ for $i=1,2,\ldots, n$.  

The more general admissible polarized
quivers turn out to be convenient for the description of the indecomposable
representation of skewed-gentle algebras over fields with $\kar{\Ka}\neq 2$. 
Section~\ref{sec:PolQ} serves mainly to fix our notation and to introduce asymmetric resp. symmetric strings and bands for skewed-gentle polarized quivers. 
Moreover, we introduce the combinatorial aspects of the
Auslander-Reiten translation for representations of skewed-gentle algebras.

In Section~\ref{sec:adm} we study \emph{admissible words} for a 
skewed-gentle polarized quiver $\uQ$.  These are a class of strings and bands
for an auxiliary gentle polarized quiver $\huQ$, where all special loops are replaced by ordinary loops, which square to zero in the gentle
algebra $\Ka\uQ$.  However, the orientation of the letters for $\huQ$, corresponding to those loops,
has to fulfill a certain consistency rule, which involves possibly the
orientation of other letters of this kind.  Our Proposition~\ref{prp:Adm}
states that this orientation problem can be solved uniquely by the rule that was used, for example, by Crawley-Boevey~\cite[Sec.~3]{CB89b} in his description of the indecomposable  representations of clannish algebras.

While  finishing the present manuscript, we became aware of
U.~Hansper's PhD thesis~\cite{Ha22}.
She showed, in Section~3.3 of her work, with a  different terminology, our Proposition~\ref{prp:Adm}.  
In her terminology, the words of the form $A(\bw)$ with $\bw\in\St(\uQ)\cup\pBa'(\uQ)$ have a weakly consistent
orientation, while the elements of $\Adm(\uQ)$ have a consistent orientation.
We include, anyway, our (shorter) proof of this result in
Appendix~\ref{sec:PfAdm}.

Qiu and Zhou introduced admissible strings in~\cite[Sec.~2.5]{QZ17}.
They use them, without any further comment, in order to parametrize a large class of (indecomposable) representations for skewed-gentle algebras.
Qiu and Zhou observed (without proof) in~\emph{loc.~\!cit.\!} that admissible
strings are very convenient to describe the homotopy classes of
curves without kinks on punctured surfaces with $\Ma\neq\emptyset$.
Apparently,  Qiu and Zhou silently assumed the validity of our Proposition~\ref{prp:Adm}.
We believe that this is a quite non-trivial fact.

In Section~\ref{sec:Quiv} we introduce, following
Crawley-Boevey~\cite[Sec.~3]{CB89b}, a quiver $H(\bx)$ for each admissible
word $\bx$  of a skewed-gentle quiver. The  quivers $H(\bx)$
should be seen as blueprints for the indecomposable modules over the corresponding skewed-gentle algebra.  
We also introduce, inspired by~\cite{Ge99}, a decorated quiver $H_\uQ(\bx,\by)$ for each pair of admissible words $\bx$ and $\bx$. 
We will see, that from the quiver $H_\uQ(\bx,\by)$ it is
possible to read off a basis of the homomorphisms between the representations corresponding to $\bx$ and $\by$.  
More precisely,  we showed in~\cite{Ge99} that in many situations the \emph{real h-lines}  give rise to such a basis.
Our first main result, Proposition~\ref{prp:bij-hl}, states that real 
h-lines are in bijection with \emph{long h-lines}.  
The point is that real h-lines are, for example, convenient to describe intersections between curves on punctured
surfaces, as observed  by Qiu and Zhou in~\cite{QZ17}, and they seem to be the natural objects for the study of the homomorphisms between representations of clans
(or skewed-gentle algebras) over an arbitrary field. 
On the other hand,  long h-lines \emph{do not} depend on the orientation of the special letters.
For this reason they are much easier to deal with.
We will see in Theorem~\ref{thm:hom} that they parametrize a basis for the space of homomorphisms between \emph{any} two indecomposable 
representations of skewed-gentle algebras over a field
with characteristic different from 2.
The notion of fringing and kisses, 
which was introduced in~\cite{BY20} in order
to calculate in a convenient way the $E$-invariant
$\dim\Hom_A(M,\tau N)+ \dim\Hom_A(M,\tau N)$
between string modules for gentle algebras, see also~\cite{PPP21}.
We extend this combinatorial
device  to strings and bands for skewed-gentle polarized quivers;
see Section~\ref{ssec:kissH}. 
We use the opportunity to relax by  the conditions on a fringing, which will be convenient for the study of skewed-gentle polarized quivers, which come from admissible
triangulations of punctured surfaces.

In Section~\ref{sec:mod} we explain how to obtain  a skewed-gentle algebra $\Ka\uQ$ from a skewed-gentle polarized quiver $\uQ$ and a field $\Ka$ with $\kar(\Ka)\neq 2$. 
We also describe the algebra $\Ka\uQ$ in terms of its
 Gabriel quiver $\tQ$ and an admissible ideal.
Then   we apply our constructions to the module category
of the skewed-gentle algebra $\Ka\uQ$. 
Under the hypothesis $\kar(\Ka)\neq 2$, Crawley-Boevey's description of the indecomposable $\Ka\uQ$-modules  
becomes quite easy; see Theorem~\ref{Thm:classif}.
In particular, one does not have to deal with the orientation of the special letters.
This allows us to describe the homomorphisms between the indecomposable representations in terms of the above-mentioned long resp.~real h-lines;
see Theorem~\ref{thm:hom}. With this at hand, we 
deduce in Section~\ref{ssec:Einv} a
convenient formula for the $E$-invariant between indecomposable
representations of skewed-gentle algebras.
The same theorem, together with our theory of fringing,
also allows us to deduce in Section~\ref{ssec:gvec} a formula for the g-vector of any indecomposable
module over a skewed-gentle algebra, which is inspired by the shear
coordinates.

Finally,  in Section~\ref{sec:gentred}, let $\Ka$ an algebraically closed
field with $\kar(\Ka)\neq 2$ and let $\uQ$ be a skewed-gentle polarized quiver. 
We give a combinatorial description of the set $\tIrrind{\Ka\uQ}$ of
indecomposable, generically $\tau$-regular irreducible components for a
skewed-gentle algebra $\Ka\uQ$.  To this end, we introduce the set of
\emph{tagged} admissible words $\Adm^*(\uQ)$,  and construct for each
$(\bx,s)\in\Adm^*(\uQ)$ a natural closed subset
$\widebar{M}^o_{(\bx,s)}\subset\rep_{\Ka\uQ}^{\bd(\bx,s)}(\Ka)$
of the corresponding
representation variety of $\Ka\uQ$.  Each $\widebar{M}^o_{(\bx,s)}$ contains
a dense subset of indecomposable representations.  Moreover, we introduce functions
\[
  e_\uQ\df \Adm^*(\uQ)\times\Adm^*(\uQ)\ra\NN \text{ and }
  \bg_\uQ\df \Adm^*(\uQ)\ra\ZZ^{\tQ_0},
\]
combinatorially defined in terms of a fringing $\uQ^f$ and our
notion of kisses.  Our main result, Theorem~\ref{thm:tirrind}, states that
$e_\uQ((\bx,s), (\by,t))$ gives the generic value of the $E$-invariant
$E_{\Ka\uQ}$ on $\widebar{M}^o_{(\bx,s)}\times\widebar{M}^o_{(\by,t)}$, whilst
$\bg_\uQ(\bx,s)$ gives the generic value of the g-vector $\bg_{\Ka\uQ}$
on $\widebar{M}^o_{(\bx,s)}$.  Most importantly, with
\[
\Adm^*_\tau(\uQ):=\{(\bx,s)\in\Adm^*(\uQ)\mid e_\uQ((\bx,s), (\bx,s))=0\},
\]
our construction $\widebar{M}^o_?$ induces a bijection
\[
  ([\Adm^*_\tau(\uQ)]/\simeq)\ \ra\tIrrind(\Ka\uQ),\quad
  [(\bx,s)]\mapsto\widebar{M}^o_{(\bx,s)}
\]
for an adequate equivalence relation on $\Adm^*(\uQ)$.  

In order to fix notation, and for the convenience of the reader, we discuss in Appendix~\ref{ssec:infdih} in detail the indecomposable representations of the infinite dihedral group over an algebraically closed field $\Ka$ with $\kar(K)\neq 2$.  
This is, in our context, fundamental for the description of the indecomposable representations attached to symmetric bands.

\section{Polarized Quivers and their combinatorics} \label{sec:PolQ}
\subsection{Polarized quivers} \label{ssec:polar}
We introduce polarized quivers as a tool to study the combinatorics of representations of skewed-gentle algebras.

\begin{Def}
A \emph{polarized quiver} is a tuple
$\uQ=(Q_0, \Qsp\coprod \Qord,\us,\ut)$,
where $Q_0$ is the (finite) set of vertices, $Q_1=\Qsp\cup \Qord$ is
the set of arrows, which contains a (possibly empty) subset of ``special''
arrows $\Qsp$,  and 
  \[
    \us, \ut\df Q_1\ra Q_0\times\{-1,1\}
  \]
are a pair of injective maps, which we write as
$\us=(s,s_1)$ and $\ut=(t,t_1)$.
Moreover, for each $\eps\in\Qsp$ there exists, by definition,  a unique $\eps'\in\Qsp$ with $\us(\eps')=\ut(\eps)$ and $\ut(\eps')=\us(\eps)$, 
and we request   $s_1(\eps)=-1=t_1(\eps)$.  
  
An \emph{admissible path} for $\uQ$ is a sequence of arrows
$\alp_1\alp_2\cdots\alp_l$ such that $\us(\alp_i)=-\ut(\alp_{i+1})$
for $i=1,2,\ldots, l-1$, where $-\ut(\alp):=(t(\alp),-t_1(\alp))$.
$\uQ$ is \emph{admissible} if it has only finitely many admissible paths.
$\uQ$ is \emph{skewed-gentle} resp.~\emph{gentle} if moreover $\Qsp$ is contained in the set of loops of $\uQ$, resp.~$\Qsp=\emptyset$. 
A \emph{homomorphism of polarized quivers} $F\df\uQ\ra\uQ'$ consists of a pair of maps $F_0\df Q_0\ra Q'_0$ and $F_1\df Q_1\ra Q'_1$ such that  for each arrow $\alp\in Q_1$ we have
\[
  \us'(F_1(\alp))=(F_0(s(\alp)),s_1(\alp)),\quad
   \ut'(F_1(\alp))=(F_0(t(\alp)),t_1(\alp)) \text{ and }
   F(\Qsp)\subset (Q')^{\mathrm{sp}}.
 \]
If moreover $F(\Qord)\subset (Q'_1)^\ord$, we say that $F$ is \emph{strict}.
\end{Def}
We will see later, that the polarizations are a (somehow redundant) way to codify the relations of a skewed-gentle algebra.

\begin{Rem} \label{rem:polarized}
We observe that for a polarized quiver $\uQ$, in the underlying quiver
$Q:=(Q_0,Q_1,s,t)$ at each given vertex, there start at most two arrows and there end at most two arrows. Let $\eps\in\Qsp$ and $\eps'$ the corresponding
``dual'' special arrow, then $(\eps')'=\eps$. 
Moreover, we note that if a special arrow $\eps$ is a
loop, then $\eps=\eps'$. If $\phi\df\uQ\ra\uQ'$ is a homomorphism of
polarized quivers, 
and $\alp,\bet\in Q_1$ are two different arrows with
$s(\alp)=s(\bet)$ or with $t(\alp)=t(\bet)$, then $\phi(\alp)\neq\phi(\bet)$.
Thus, homomorphisms between polarized quivers automatically fulfill the property (W1) of a winding from~\cite{Kr91}.
\end{Rem}

If $\uQ$ is skewed-gentle polarized quiver, there is at each vertex 
$i\in Q_0$ at most one special loop $\eps_i$.  It will be sometimes convenient to set
\[
  \Qspv:=\{i\in Q_0\mid \text{ there is a special loops at } i\}
  \quad\text{and}\quad \Qordv:=Q_0\setminus\Qspv.
\]

\subsection{Letters} \label{ssec:let}
Associated to a polarized quiver $\uQ$ we have a set $\cL=\cL(\uQ)$ of letters, which come in 5 types.
We describe them in the  Table~\ref{tab:letters}.
In particular, we extend the functions $\us$ and $\ut$ to functions
$\cL(\uQ)\ra (Q_0\times\{-1, 1\}) \cup\{\ast\}$ with  a new  “virtual vertex” $\ast$.
 
\begin{table}[h]
\begin{tabular}{|m{2cm} |l |m{4cm}|c|c|l|}\hline
  type     & symbol & condition       & $\us$      & $\ut$      & inverse    \\ \hline\hline
  ordinary & $\alp$ & $\alp\in\Qord$& $\us(\alp)$& $\ut(\alp)$& $\alp^{-1}$ \\ \hline
inverse &$\alp^{-1}$ & $\alp\in\Qord$& $\ut(\alp)$& $\us(\alp)$& $\alp$     \\ \hline
special &$\eps^*$   & $\eps\in\Qsp$ & $(s(\eps),-1)$ & $(t(\eps),-1)$& $\eps'^*$  \\ \hline
trivial &$\II_{i,\rho}$ & $(i,\rho)\in Q_0\times\{-1,1\}\setminus\{\us(\eps)\mid\eps\in\Qsp\}$ &
* &$(i,\rho)$  & $\II_{i,\rho}^{-1}$\\ \hline                                                         trivial inverse &$\II_{i,\rho}^{-1}$ & $(i,\rho)\in Q_0\times\{-1,1\}\setminus\{\us(\eps)\mid\eps\in\Qsp\}$ &$(i,\rho)$  & *& $ \II_{i,\rho}$\\\hline
\end{tabular}\vspace*{2ex}
\caption{Letters for a polarized quiver}
\label{tab:letters}
\end{table}
Occasionally we will write, for typographic reasons, 
$\II^{\pm 1}_{i,\pm}$
in place of the more formal $\II^{\pm 1}_{i,\pm 1}$.
We note that for each $(i,\rho)\in Q_0\times\{-1,1\}$ the set
\[
  \cL_{(i,\rho)}(\uQ):=\{x\in\cL(\uQ)\mid \ut(x)=(i,\rho)\}
\]
consists either of a single special letter, or else it is, by definition,
a linearly ordered set of  one of the following forms:
\[
\{\II_{i,\rho}\}, \{\alp<\II_{i,\rho}\},\{\II_{i,\rho}<\bet^{-1}\}, \{\alp<\II_{i,\rho}<\bet^{-1}\}
\]
for certain ordinary arrows $\alp,\bet\in\Qord$.  
This yields a partial order $<$ on $\cL$, 
where two letters $w_1\neq w_2$ are comparable if and only if 
$\ut(w_1)=\ut(w_2)\in Q_0\times\{-1,1\}$.

In particular, if $\uQ$ is a skewed-gentle polarized quiver and $i\in\Qspv$,
then there is a unique special letter $\eps_i^*$ with
$\us(\eps_i^*)=(i,-1)=\ut(\eps_i^*)$,
and there is only a trivial letter $\II_{i,+1}$ but \emph{no} letter
$\II_{i,-1}$.

\subsection{Words}
A \emph{word} for $\uQ$ is a sequence of letters 
$\bw=w_1w_2\cdots w_m$ with the $w_i\in\cL_\uQ$ such that
$\us(w_i)=-\ut(w_{i+1})\in Q_0\times\{-1,1\}$ for $i=1,2,\ldots,m-1$. In this case, $w_1$ is called the \emph{first letter}, and $w_m$  is called the \emph{last letter} of $\bw$. 
Moreover, we set
\begin{align*}
\bw_{[k]} &:=w_1w_2\ldots w_{k}, &  \us(\bw)  &:=\us(w_m),\\
\bw^{[k]} &:=w_kw_{k+1}\cdots w_m,& \ut(\bw)  &:=\ut(w_1),\\
\bw^{-1}  &:=w_m^{-1}w_{m-1}^{-1}\cdots w_1^{-1},& l(\bw)    &:=m.  
\end{align*}
\
If $\bv=v_1v_2\cdots v_l$ is another word with 
$\ut(\bv)=-\us(\bw)$, we can form the
concatenated word $\bw\bv=w_1\cdots w_mv_1\cdots v_l$. Moreover, $\bw$ is \emph{right inextensible} if $w_m=\II_{i,\rho}$ for some
$(i,\rho)\in (Q_0\times\{-1,1\})\setminus\{\us(\eps)\mid \eps\in\Qsp\}$,
or equivalently if $\us(w_m)=*=\us(\bw)$. Finally, $\bw$ is \emph{left inextensible} if $\bw^{-1}$ is right inextensible.  
Note, that the above defined admissible paths are words in our sense. 
In view of the construction of the
skewed-gentle algebra $\Ka\uQ$ (see Section~\ref{ssec:skga}) associated to a skewed-gentle polarized quiver, the (classes of the) admissible paths together with the trivial letters form a $\Ka$-basis of $\Ka\uQ$.

\subsubsection*{Example} For each $(i,\rho)\in (Q_0\times\{-1, 1\}) \setminus\{\us(\eps)\mid\eps\in\Qsp\}$ we have the simple string
\[
\tbs_{(i,\rho)}:= \begin{cases}
\II_{(i,\rho)}^{-1}\II_{(i,\rho)} &\text{ if } i\in\Qordv,\\
\II_{(i,1)}^{-1}\eps_i^*\II_{(i,1)} &\text{ if } i\in\Qspv,
\end{cases}
\quad\text{with}\quad
\tbs_{(i,\rho)}^{-1}=\begin{cases}
    \tbs_{(i,-\rho)} &\text{ if } i\in\Qordv,\\
    \tbs_{(i,\rho)}  &\text{ if } i\in\Qspv.
\end{cases}
\]
In contrast, $\II_{(i,\rho)}\II_{(i,\rho')}^{-1}$ is \emph{not} even a word.

\subsection{Lexicographic order} \label{ssec:lex}
The lexicographic order on all words, which is induced by the
partial order on the letters, gives a poset with poor properties.
For example, if $\bv=\bv'\bv''$ is a non-trivial factorization of a word $\bv$, then $\bv'$ and $\bv$ are \emph{not} comparable. 

Note, however, that two right-inextensible words with comparable first letters are comparable under this lexicographical order.
The same is true for two words of the same length: they are comparable if their respective first letters are comparable. 
In fact, if two words are comparable, say $\bv<\bw$,  then there exists a unique word $\bv'$ and comparable letters
$v<w$ such that $\bv=\bv' v\bv''$ and $\bw=\bv'w\bw''$.  
In this case we write $\Del(\bv,\bw)=l(\bv')$.  
In particular, $v$ is an ordinary letter, or $w$ is an inverse ordinary letter. 
In the first case, $w$ can be trivial or
inverse, whilst in the second case, $v$ can be direct or trivial. Note that either of the words $\bv''$ and $\bw''$ may be empty. 

Similarly, if we have factorizations $\bv=\bv'\bv''$ and $\bw=\bw'\bw''$, then $\bv'<\bw'$ implies $\bv<\bw$.

\subsection{Strings and Bands}\label{ssec:StBa}
A \emph{string} for $\uQ$ is a word which is left- and right inextensible.
Thus, the first letter of a string is always a trivial inverse letter.
We denote the set of strings for $\uQ$ by $\St(\uQ)$, and $\St_{(i,\rho)}(\uQ)$
denotes the subset of strings that have as their first letter $\II_{i,\rho}^{-1}$.
This set is linearly ordered by the lexicographic order, see
Section~\ref{ssec:lex}.  Note, that 
$\St(\uQ)=\cup_{(i,\rho)\in Q_0^+}\St_{(i,\rho)}(\uQ)$, where 
$Q_0^+:= (Q_0\times\{-1,1\})\setminus\{\us(\eps)\mid \eps\in\Qsp\}$. 
A string $\bw$ is \emph{symmetric} if $\bw=\bw^{-1}$.
It is easy to see that in this case
there exists a unique left inextensible word $\bv$ and a special letter
$\eps^{*}=(\eps^*)^{-1}$
such that $\bw=\bv\eps^*\bv^{-1}$.

A \emph{band} is a word $\bw$ such that $\bw\bw$ is also a word.
A band is \emph{primitive} if there is no  word $\bv$ such that
$\bw=\bv^{n}$ for some $n\geq 2$.
If $\bw$ is a (primitive) band, then so are all its
\emph{rotations} $\bw^{[k+1]}\bw_{[k]}$.
A primitive band is \emph{symmetric} if $\bw^{-1}=\bw^{[k+1]}\bw_{[k]}$
for some $k$.
This is the case if and only if some rotation of $\bw$ is of the form 
$\eps^*\bv\zeta^*\bv^{-1}$ for some word $\bv$ and
special letters $\eps^*$ and $\zeta^*$. These remarks and definitions go back to~\cite[Sec.~3]{CB89b}, see~\cite[Lemma~2.14]{BTCB24} for proofs.
Note, that a special letter $\eps^*$ alone is not a symmetric band, since $\eps^*\eps^*$ is not a word.

We denote by $\pBa(\uQ)$ resp.~$\Ba(\uQ)$ the set of (primitive) bands for $\uQ$.
Two strings are \emph{equivalent}, in symbols $\bv\sim\bw$ if $\bv\in\{\bw,\bw^{-1}\}$.  
Two bands are \emph{equivalent}, in symbols $\bv\sim\bw$ if
$\bv^\rho=\bw^{[k+1]}\bw_{[k]}$ for some 
$\rho\in\{-1,+1\}$ and $k=0,1,\ldots,l-1$.
We denote by $\pBa'(\uQ)$ the set of primitive bands in \emph{standard form}.
By definition, $\pBa'(\uQ)\subset\pBa(\uQ)$ consists of
all asymmetric primitive bands, and of the symmetric (primitive) bands, which are of the form $\eps^*\bv\zeta^*\bv^{-1}$, as above.

Sometimes we will also consider the equivalence classes of (primitive) bands modulo rotations, which we will denote by $[\pBa(\uQ)]$ resp. $[\Ba(\uQ)]$.

\subsection{Projective strings, injective strings, and the
  operator  \texorpdfstring{$\tau_\uQ$}{τ}} \label{ssec:proj}
Let $\uQ$ be a skewed-gentle polarized quiver.  Then, for each
$(i,\rho)\in Q_0\times\{-1,1\}\setminus\{\us(\eps^*)\mid\eps\in\Qsp\}$,
the linearly ordered set $\St_{(i,\rho)}(\uQ)$ has a (unique)
maximal element $\II_{i,\rho}^{-1}\bw^{\max}_{i,\rho}$ and a
minimal element $\II_{i,\rho}^{-1}\bw^{\min}_{i,\rho}$.
For example, $\bw_{i,\rho}^{\min}$ is the unique right inextensible word of maximal length, which contains no inverse letter, and with
$\ut(\bw_{i,\rho}^{\min})=(i,-\rho)$. 
Note, that for $i\in\Qspv$ we have 
$\bw_{i,1}^{\max}=\eps_i^*{\bw'}_{i,1}^{\max}$ and 
$\bw_{i,1}^{\min}=\eps_i^*{\bw'}_{i,1}^{\min}$ for certain right inextensible
words ${\bw'}_{i,1}^{\max}$ and ${\bw'}_{i,1}^{\min}$.

This allows to define the following projective, resp.~injective strings:
 \begin{alignat*}{3}
  \tbp_{i,\rho} &:= (\bw_{i,-\rho}^{\max})^{-1}\bw_{i,\rho}^{\max},
                 &&\text{ for } (i,\rho)\in \Qordv\times \{-1,1\}, \\
   \tbp_{i} &:=({\bw'}_{i,1}^{\max})^{-1}\eps_i^*{\bw'}_{i,1}^{\max} &&\text{ for } i\in\Qspv,\\ 
   \tbq_{i,\rho} &:= (\bw_{i,-\rho}^{\min})^{-1}\bw_{i,\rho}^{\min}
   &&\text{ for } (i,\rho)\in \Qordv\times \{-1,1\}, \\
   \tbq_{i} &:=({\bw'}_{i,1}^{\min})^{-1}\eps_i^*{\bw'}_{i,1}^{\min} &&\text{ for } i\in\Qspv. 
 \end{alignat*}
 We note that $\tbp_{i,\rho}^{-1}=\tbp_{i,-\rho}$ and $\tbq_{i,\rho}^{-1}=\tbq_{i,\rho}$,
 whilst $\tbp_i$ and $\tbq_i$ for $i\in\Qspv$ are symmetric strings.

Now, let $\bw=\II_{i,\rho}^{-1}\bw'\II_{j,\sig}$  a string, where, by definition,
$(j,\sig)=-\us(\bw')$.
If there exists an inverse letter $\alp^{-1}$ with $\ut(\alp^{-1})=(j,\sig)$, the
string defined by \emph{adding a co-hook on the right} to $\bw$ is
$\II_{i,\rho}\bw'\alp^{-1}\bw^{\min}_{-\us(\alp^{-1})}$.  
In this case it is the immediate successor of $\bw$ in $\St_{i,\rho}(\uQ)$ . 
If there is a direct letter $\bet$ with 
$\ut(\bet)=(j,\sig)$, the string defined by \emph{adding a hook to the right} to $\bw$ is  $\II_{i,\rho}^{-1}\bw'\bet\bw^{\max}_{-\us(\bet)}$.  
In this case it is the immediate predecessor of $\bw$.
Furthermore, if one cannot add a co-hook on the right to 
$\bw\neq \II_{i,\rho}^{-1}\bw^{\max}_{i,\rho}$, then there exists a minimal $k$ with
$1<k\leq l(\bw)$ such that $\bw^{[k]}=\bw^{\max}_{h,\lam}$ with 
$(h,\lam)=\ut(\bw^{[k]})$. In this case $\bw$ is obtained by adding to the string 
$\bw_{[k-1]}\II_{-\us(\bw_{[k-1]})}$ a hook to the right. So, in this case 
$\bw_{[k-1]}\II_{-\us(\bw_{[k-1]})}$ is the immediate successor of $\bw$. Altogether, the above describes how to compute the immediate successor $\bw[1]$
of $\bw$, if $\bw'\II_{j,\rho}\neq\bw^{\max}_{i,\rho}$.

This allows us to define a bijection
 \begin{multline*}
   \tau_\uQ\df \St(\uQ)\setminus\cP(\uQ)\ra \St(\uQ)\setminus\cQ(\uQ),\\
   \bw\mapsto
   \begin{cases}
     [1](\bw[1]) & \text{if } \bw\phantom{^{-1}} \neq \II^{-1}_{i,\rho}\bw_{i,\rho}^{\max} \text{ for some }
     (i,\rho)\in Q_0\times \{-1,+1\},\\
     ([1]\bw)[1] & \text{if } \bw^{-1}\neq\II^{-1}_{i,\rho}\bw_{i,\rho}^{\max}
     \text{ for some } (i,\rho)\in Q_0\times \{-1,+1\}.
   \end{cases}  
 \end{multline*}
where we set
\begin{align*}
 \cP(\uQ) &:=
 \{\tbp_{i,\rho}\mid (i,\rho)\in\Qordv\times\{-1,+1\}\}\cup\{\tbp_i\mid i\in\Qspv\},\\
 \cQ(\uQ) &:=
 \{\tbq_{i,\rho}\mid (i,\rho)\in\Qordv\times\{-1,+1\}\}\cup\{\tbq_i\mid i\in\Qspv\},\\
 [1]\bw& :=(\bw^{-1}[1])^{-1}.
\end{align*}
It is not hard to see that this is a well-defined bijection. In particular,
$([1]\bw)[1]=[1](\bw[1])$ if neither $\bw$ nor $\bw^{-1}$ is of the form
$\bw_{i,\rho}^{\max}$ for some $(i,\rho)\in Q_0\times\{-1,+1\}$.
We agree that for each band $\bw\in\pBa(\uQ)$ we have $\tau_\uQ(\bw)=\bw$.

Our constructions and terminology here are inspired by the classical constructions of Butler and Ringel~\cite[pp. 166--172]{BR87}. 
It was adapted in~\cite[Sec.~5]{Ge99} to the representation theory of finite dimensional clans.

\subsection{Example} \label{ssec:ExSt}
In the following display, we show a skewed-gentle polarized quiver $\uQ$ with
$\Qsp=\{\eps\}$ by indicating for each non-special arrow the values of the polarization close to the respective and of the arrow.  
Moreover, we display, as an example, the elements of the linearly ordered set $\St_{1,-1}(\uQ)$ in descending order.
\[
\uQ=\quad\vcenter{\xymatrix{&\ar[ld]^(.45)\bet_(.8)-_(.25)+ 2\ar@(ul,ur)[]_{\eps}^(.2)-^(.8)- \\
    3\ar[rr]^\gam_(.3)+_(.7)+ 
    && 1\ar[lu]^\alp_(.2)+_(.75)+\\
  }}\qquad
\St_{1,-1}(\uQ):\quad
  \setlength\arraycolsep{-.5pt}
  \begin{matrix}
\tbp_{1,-1}&=&    \II^{-1}_{1,-} &\alp^{-1} &\eps^* &\bet^{-1} &\gam^{-1} & \II_{1,-}\\
\tbq_{3,-1}&=&    \II^{-1}_{1,-} &\alp^{-1} &\eps^* &\bet^{-1} & \II_{3,+}\!\!\\
&&    \II^{-1}_{1,-} &\alp^{-1} &\eps^* & \II_{2,+}\!\!\\
\tbq_2&=&    \II^{-1}_{1,-} &\alp^{-1} &\eps^* &\alp    & \II_{1,-}\!\!\\
\tbs_1&=&    \II^{-1}_{1,-} &\II_{1,+}\!\!\!\!\\
\tbp_{3,+1}&=&    \II^{-1}_{1,-} &\gam & \II_{3,-}\!\!\!\!\\
\tbp_2&=&    \II^{-1}_{1,-} &\gam & \bet & \eps^*& \bet^{-1}& \gam^{-1} &\II_{1,-}\\
&&    \II^{-1}_{1,-} &\gam & \bet & \eps^*& \bet^{-1}&  \II_{3,+}\!\!\\
&&   \II^{-1}_{1,-} &\gam & \bet & \eps^*& \II_{2,+}\!\!\\
\tbp_{1,+1}&=&    \II^{-1}_{1,-} &\gam & \bet & \eps^*& \alp &\II_{1,-}\!\!\\
  \end{matrix}  
\]  
Note, that $\St_{1,-1}(\uQ)$ contains precisely two symmetric
strings, and that $\Ba(\uQ)=\emptyset$.

\section{Admissible words for \texorpdfstring{$\uQ$}{Q}} \label{sec:adm}
\subsection{Orders within strings and bands}\label{ssec:OrdStBa}
Let $\uQ$ be a polarized quiver for which all special letters
are loops. 
For  a string $\bw=w_0w_1\cdots w_l\in\St(\uQ)$  with $w_k=\eps_j^*$ a special letter, 
we have  $\ut(\bw_{[k-1]}^{-1})=\ut(\bw^{[k+1]})=(i,1)$.
Since, moreover, the words $\bw_{[k-1]}$ and $\bw^{[k+1]}$ are right inextensible, they are comparable in the induced lexicographical order.  
We write then
$\Del_\bw(k):=\Del(\bw_{[k-1]}^{-1},\bw^{[k+1]})$, 
see Section~\ref{ssec:lex}.
In this situation $\bw_{[k-1]}^{-1}=\bw^{[k+1]}$ occurs if and only if $\bw$ is a
symmetric string and $\bw=\bw_{[k-1]}\eps_j^*\bw_{[k-1]}^{-1}$.

Similarly, for a primitive band $\bw=w_0w_1\cdots w_l\in\pBa$  with
$w_k=\eps_j^*$ a special letter,  
$\bw_{[k-1]}^{-1}(\bw^{[k+1]})^{-1}=(\bw^{[k+1]}\bw_{[k-1]})^{-1}$ and
$\bw^{[k+1]}\bw_{[k+1]}$ are comparable because then
$\ut(\bw^{[k+1]}\bw_{[k-1]})=\ut(\bw_{[k-1]}^{-1}(\bw^{[k+1]})^{-1})=(i,1)$ and
both words have the same length $l$. We write then also
$\Del_\bw(k):=\Del(\bw^{[k+1]}\bw_{[k-1]}, \bw_{[k-1]}^{-1}(\bw^{[k+1]})^{-1})$.
Note  that in this situation
$\bw_{[k-1]}^{-1}(\bw^{[k+1]})^{-1}=\bw_{[k+1]}\bw^{[k-1]}$ implies that
$\bw_{[k+1]}\bw^{[k-1]}=\bx\eps_j^*\bx^{-1}$ for a unique word $\bx$ and a special
letter $\eps_j^*$.  Thus, $\bw$ is a symmetric band.

\subsection{\texorpdfstring{$\huQ$}{Q} and construction of
  \texorpdfstring{$A(\bw)$}{A(w)} for strings and bands}
\label{ssec:huQA}
Following~\cite[Sec.~3.2]{Ge99},
the \emph{associated gentle polarized quiver} $\huQ=(\hQ_0,\hQ_1,\us,\ut)$
of a skewed-gentle polarized quiver $\uQ$ 
is obtained  by taking $\hQ_0=Q_0$, 
$\hQord=\Qord\cup\Qsp$, $\hQsp=\emptyset$ and otherwise the same functions
$\us,\ut$. Thus, we  have a morphism
\[
F\df\huQ\ra\uQ
\]
of polarized quivers, which is bijective on vertices and arrows.
We will apply our constructions from
Section~\ref{ssec:let} to  $\huQ$. In particular, we now have a linearly ordered set 
\[   
\cL_{(i,-1)}(\huQ)=\{\eps_i\prec\II_{i,-1}\prec\eps_i^{-1}\} 
\] 
of letters for each special arrow $\eps_i\in\Qsp$.

We can extend $F$ to a surjective morphism  between the corresponding sets of  words, which is compatible with the concatenation of words.
In particular,   
$F^{-1}(\eps_i^*)=\cL_{(i,-1)}(\huQ)$ for each special letter $\eps_i^*$ and
$F^{-1}(l)=\{l\}$ for each letter $l$ that is not special.
Following~\cite[Sec.~2.3]{QZ17}, we call the trivial letters for $\huQ$ which
are of the form 
  $\II^{\pm 1}_{i,-1}$ with $i\in\Qspv$ \emph{punctured letters}.
We say that a string $\bx=x_0x_1\cdots x_l\in\St(\huQ)$ 
is of \emph{type} $(a,b)\in\{u,p\}^2$, with $a=p$ if and only if $x_0$ is
punctured, and $b=p$ if and only if $x_l$ is punctured.
Similarly, we say that the elements of $\pBa(\huQ)$ are of \emph{type} $(b)$.

Note that we use the symbol $\prec$ for the partial order on
the letters for $\huQ$ as well as on the induced lexicographic order on the
corresponding words, rather than the symbol $\leq$ which we reserve for
the strings and bands of $\uQ$.

We construct now, closely inspired by~\cite[Sec.~3]{CB89b}, for each
$\bw\in\St(\uQ)\cup\pBa'(\uQ)$ an element
$A(\bw)\in\St(\huQ)\cup\pBa(\huQ)$.

For $\bw=w_0w_1\cdots w_n$  an \textbf{asymmetric string},
$A(w):=a_0a_1\cdots a_n$ with $a_i=w_i$ whenever $w_i$ is not a special letter.
If however $w_i=\eps^*$ is a special letter, then
$a_i=\eps$ if $(\bw_{[i-1]})^{-1} >\bw^{[i+1]}$,  and $a_i=\eps^{-1}$ if
$(\bw_{[i-1]})^{-1} <\bw^{[i+1]}$.  

For $\bw=v_0v_1\cdots v_{m-1}\eps_b^* v_{m-1}^{-1}\cdots v_0^{-1}$  with
$b\in\Qspv$, a \textbf{symmetric string},
$A(\bw)=a_0a_1\cdots a_{m-1} \II_{b,-1}$ with $a_i=w_i$ if $w_i$ is not a special
letter. If, however $w_i=\eps^*$ is a special letter, then
$a_i=\eps$ if $(\bw_{[i-1]})^{-1} >\bw^{[i+1]}$,  and $a_i=\eps^{-1}$ if
$(\bw_{[i-1]})^{-1} <\bw^{[i+1]}$.

For $\bw=w_0w_1\cdots w_n$ an \textbf{asymmetric band},
$A(\bw)=a_0a_1\cdots a_n$ with $a_i=w_i$ if $w_i$ is not a special letter.
If, however $w_i=\eps^*$ is a special letter, then $a_i:=\eps$ if
$\bw_{[i-1]}^{-1}(\bw^{[i-1]})^{-1}>\bw^{[i+1]}\bw_{[i-1]}$ and $a_i:=\eps^{-1}$ 
if $\bw_{[i-1]}^{-1}(\bw^{[i-1]})^{-1}<\bw^{[i+1]}\bw_{[i-1]}$.

For $\bw=\eps^*_av_1v_2\cdots v_{n-1}\eps^*_bv_{n-1}^{-1}\cdots v_1^{-1}$ a
\textbf{symmetric band} in standard form,
$A(\bw)=\II^{-1}_{a,-1}a_1a_2\cdots a_{n-1}\II_{b,-1}$ with
$a_i=v_i$ if $v_i$ is not a special letter.  Else,  for $v_i=\eps^*$  special,
we set $a_i=\eps$ if $\bw_{[i-1]}^{-1}(\bw^{[i+1]})^{-1}>\bw^{[i+1]}\bw_{[i-1]}$ and
$a_i=\eps^{-1}$ if $\bw_{[i-1]}^{-1}(\bw^{[i+1]})^{-1}<\bw^{[i+1]}\bw_{[i-1]}$.

\subsubsection*{Remark added in proof} After this manuscript was completed, we became aware of the constructions in~\cite[Sec.~2 \& 3]{BTCB24}:  
Bennett-Tennenhaus and Crawley-Boevey use in place of our set
$\St(\uQ)\cup\pBa'(\uQ)$  their equivalent notion of 
\emph{end-admissible words}. In this setting, our bands correspond to certain (infinite) periodic words, whilst our strings correspond to certain finite words without trivial letters. Next, they associate to each end-admissible
word $\bw$ a \emph{canonically oriented walk} $C_\bw$ of the same length
as $\bw$.  This walk $C_\bw$ would be in our notation a (possibly infinite)
word for $\huQ$.  Following the discussion in Sections 3.1--3.4 of 
\emph{loc.\!~cit.}, our word $A(\bw)$ would be then be (up to possible trivial
letters) a subword of  $C_\bw$ corresponding to a subset $J_\bw$ of the
index set $I_\bw$ of the letters in the walk $C_\bw$.

\subsection{Admissible words vs.~strings and bands}
\label{ssec:adm}
Let $\uQ$ be a skewed-gentle polarized quiver, and $\huQ$ the associated
gentle polarized quiver from Section~\ref{ssec:huQA}. We recall
that $F\df\huQ\ra\uQ$ induces a surjective map between the respective
sets of words, which we also denote by $F$.  However, $F$ does not
restrict directly to a map between the respective sets of strings and
(primitive) bands. For example, for $i\in\Qspv$ we have
$\II_{i,-}^{-1}\II_{i,+}^{\phantom{-1}}\in\St(\huQ)$, however
$F(\II_{i,-}^{-1}\II_{i,+}^{\phantom{-1}})=\eps_i^*\II_{i,+}\not\in\St(\uQ)$. 

Thus, we define the completion
$\overline{?}\df \St(\huQ)\cup\Ba(\huQ)\ra\St(\uQ)\cup\Ba(\uQ)$ by
\begin{equation} \label{eq:cpl}
  \bx=x_0x_1\cdots x_l \mapsto\begin{cases}
    F(\bx) &\text{if } \bx\in\St(\huQ) \text{ is of type } (u,u),\\
    F(\bx)F(x_{l-1}^{-1}\cdots x_1^{-1} x_0^{-1})
    &\text{if } \bx\in\St(\huQ) \text{ is of type } (u,p),\\
    F(x_{l}^{-1}\cdots x_2^{-1}x_1^{-1})F(\bx)
    &\text{if } \bx\in\St(\huQ) \text{ is of type } (p,u),\\
    F(\bx) F(x_{l-1}^{-1}\cdots x_2^{-1}x_1^{-1}) 
    &\text{if } \bx\in\St(\huQ) \text{ is of type } (p,p),\\
    F(\bx) &\text{if } \bx\in\Ba(\huQ).
  \end{cases}
\end{equation}
In the first three cases $\widebar{\bx}\in\St(\uQ)$, whilst
$\widebar{\bx}\in \Ba(\uQ)$ in the last two cases. Note, that for a primitive
band $\bx\in\pBa(\huQ)$, possibly $\widebar{\bx}\in\Ba(\uQ)$ is not primitive.

Following Qiu and Zhou~\cite[Sec.~2.5]{QZ17} a string
$\bx=x_0x_1\cdots x_n$ for $\huQ$ is
\emph{admissible} (with respect to $F$) if it fulfills the following two
conditions:
\begin{itemize}
\item
  For any $k\in\{1,2,\ldots, n-1\}$ with  $F(x_k)\in\Qsp$ we have
  $\bx_{[k-1]}^{-1}\neq\bx^{[k]}$ and in this situation $x_k$ is 
  direct if and only if $\bx_{[k-1]}^{-1}\succ \bx_{[k+1]}$.
\item
  If $\bx$ is of type $(p,p)$ then $\widebar{\bx}$ is a primitive band.
\end{itemize}

Observe that a string $\bx$ is admissible if and only if
$\bx^{-1}$ is admissible.

Similarly, we say that a band $\bx=x_0x_1\cdots x_n$ for $\huQ$ is
\emph{admissible} (with respect to $F$), if the following two conditions
are fulfilled:
\begin{itemize}
  \item  For any $k\in\{1,2,\ldots, n\}$ with  $F(x_k)\in\Qsp$ we have
    $(\bx^{[k-1]})^{-1}\bx_{[k-1]}^{-1}\neq\bx^{[k+1]}\bx_{[k-1]}$,
      and in this situation $x_k$ is 
      direct if and only if
      $(\bx^{[k-1]})^{-1}\bx_{[k-1]}^{-1}\succ\bx^{[k+1]}\bx_{[k-1]}$,
    \item  $F(\bx)$ is a primitive asymmetric band for $\uQ$.    
\end{itemize}   

\begin{Prop} \label{prp:Adm}
Let $\uQ$ be a skewed-gentle polarized quiver and $F\df\huQ\ra\uQ$
the canonical morphism starting in the corresponding gentle polarized
quiver $\huQ$.  Then $\bx\in\St(\huQ)\cup\pBa(\huQ)$ is admissible
if and only if $\bx=A(\bw)^{\pm 1}$ for some $\bw\in\St(\uQ)\cup\pBa'(\uQ)$.

In particular, all the words $A(\bw)$ are admissible strings or bands, and we
have obviously $\widebar{A(\bw)}=\bw$.  
\end{Prop}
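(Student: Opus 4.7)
The plan is to prove both implications by exploiting the interplay between the lexicographic order on $\uQ$-words and on $\huQ$-words under the morphism $F\colon\huQ\to\uQ$, which collapses each pair of opposite letters $\eps,\eps^{-1}\in\cL(\huQ)$ attached to a special loop to the unique special letter $\eps^*\in\cL(\uQ)$. Since $F$ is a bijection on ordinary letters, the fundamental step is a compatibility lemma of the following kind: for two $\huQ$-words $\bu,\bv$ of equal length with $\ut(\bu)=\ut(\bv)$, one has $\bu\prec\bv$ iff either $F(\bu)<F(\bv)$ in $\uQ$, or else the first position $i$ where $u_i\neq v_i$ satisfies $F(u_i)=F(v_i)\in\Qsp$, in which case the comparison is resolved via the convention $\eps\prec\II_{i,-1}\prec\eps^{-1}$ on the letters above a given special loop. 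An analogous statement holds for right-inextensible $\huQ$-words. This is essentially bookkeeping from the definitions recorded in Section~\ref{ssec:lex}.

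For the ``if'' direction, fix $\bw\in\St(\uQ)\cup\pBa'(\uQ)$ and set $\bx:=A(\bw)$. The non-special positions of $\bx$ coincide with those of $\bw$, so the admissibility conditions reduce to the orientation rule at each special position $k$. By the construction recalled in Section~\ref{ssec:huQA}, the letter $a_k$ is declared direct precisely when $\bw_{[k-1]}^{-1}>\bw^{[k+1]}$ in $\uQ$; by the compatibility lemma applied to $\bx_{[k-1]}^{-1}$ and $\bx^{[k+1]}$, this matches the $\huQ$-comparison appearing in the definition of admissibility. The strict inequality $\bx_{[k-1]}^{-1}\neq\bx^{[k+1]}$ follows because equality would force a symmetric completion around position $k$, contradicting either the asymmetry of $\bw$ or the standard form of the symmetric band under consideration; for the symmetric string case, the ``central'' special letter of $\bw$ becomes the trivial letter $\II_{b,-1}$ in $A(\bw)$ and thus does not enter the orientation condition. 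The remaining items (type $(p,p)$ completion being a primitive band, and the primitivity plus asymmetry condition for bands) follow by direct inspection of the four cases in Section~\ref{ssec:huQA}.

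For the ``only if'' direction, let $\bx$ be admissible and set $\bw:=\widebar{\bx}$. The completion formula~\eqref{eq:cpl} together with the type hypotheses in the definition of admissibility yields directly that $\bw\in\St(\uQ)\cup\pBa'(\uQ)$. To prove $A(\bw)=\bx^{\pm 1}$ we match letters position by position: non-special positions are immediate from~\eqref{eq:cpl}, while at each special position $k$ the admissibility of $\bx$ prescribes the orientation of $x_k$ via a $\huQ$-comparison that, by the compatibility lemma, coincides with the $\uQ$-comparison used to define $a_k$ in $A(\bw)$. The possible inversion $\bx\mapsto\bx^{-1}$ appears only in the type $(p,u)$ case, which is accounted for by the choice of reading direction in~\eqref{eq:cpl}.

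The main obstacle is the apparently recursive flavour of the two orientation rules: determining the orientation of a special letter in $\bx$ (or in $A(\bw)$) involves substrings that themselves may contain further special letters. The crucial point is that the compatibility lemma requires no prior information about the orientation of intermediate special letters -- it compares the words purely at the level of their $F$-images, and only once a position $i$ with $F(u_i)\neq F(v_i)$ is found does it conclude. Consequently, no circular dependence arises, and the two orientation rules are equivalent uniformly at every special position, which is the content of the proposition.
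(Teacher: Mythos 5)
There is a genuine gap, and it sits exactly where you declare the problem solved. Your ``compatibility lemma'' is correct and easy only in the case where the first position at which the two $\huQ$-words differ has distinct $F$-images: there $F$ is an order isomorphism on the relevant letter sets and the two lexicographic comparisons are decided at the same position, consistently. But in the other case -- the first differing letters both lie over the same special loop, say $\{\eps_j,\II_{j,-1},\eps_j^{-1}\}$ -- your lemma merely restates the convention $\eps_j\prec\II_{j,-1}\prec\eps_j^{-1}$; it says nothing about how that outcome relates to the $\uQ$-comparison of the $F$-images, which is \emph{not} decided at that position (both images are $\eps_j^*$) and continues further along the words. This is precisely the situation in which the orientation rule at position $k$ depends on the orientations already assigned at other special positions, so the claim that ``no circular dependence arises'' and that ``the two orientation rules are equivalent uniformly at every special position'' is an assertion of the proposition, not a proof of it. The paper's proof has to work exactly here: a disagreement between the $\uQ$-rule defining $A(\bw)$ and the $\huQ$-admissibility condition forces a factorization of the form $A(\bw)=\tby\,\eta^{-1}\tbx^{-1}\eps\,\tbx\,\eta^{-1}\tbz$ (with the flanking letters $\eta^{\pm1}$ over a special loop), and one then derives a contradiction from a chain of inequalities that uses the defining property of the orientations at those neighbouring special positions (with a more delicate variant, involving rotated words, for bands). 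None of this is ``bookkeeping from the definitions,'' and your outline contains no substitute for it.

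The same gap affects your ``only if'' direction. Given an admissible $\bx$ with $\widebar{\bx}=\bw$, matching $x_k$ with the letter $a_k$ of $A(\bw)$ at a special position again requires comparing a $\huQ$-comparison (which depends on the orientations of the \emph{other} special letters of $\bx$, a priori different from those of $A(\bw)$) with a $\uQ$-comparison; invoking the compatibility lemma is circular for the same reason. The paper handles uniqueness by a separate argument: take a position $j$ where $\bx$ and $A(\bw)$ disagree with $\Del_\bw(j)$ minimal, locate the nearest ordinary direct letters flanking the resulting factorization, and show the orientations forced at the nearby special positions $i,k$ contradict either minimality or the assumed orientations $\eta^{-1}$ there. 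To repair your proposal you would need to supply both of these arguments (or an induction on $\Del_\bw(j)$ playing the same role); as written, the central difficulty of the proposition has been assumed rather than proved.
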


We will prove Proposition~\ref{prp:Adm} in Appendix~\ref{sec:PfAdm}.
As mentioned in the Introduction, a quite similar result, though in a different language, can be found in the recent PhD thesis of U.~Hansper~\cite[Sec.~3.3]{Ha22}.  More precisely, Theorem 3.61
in \emph{{loc.\,cit.}} corresponds to our Proposition~\ref{prp:Adm}.  
Our motivation for this result was the apparent  discrepancy between
Sections~2.5 and~2.6 of~\cite{QZ17} and the main result of~\cite{CB89b}.
We became only
aware of Hansper's Ph.D. Thesis after a first version of the present manuscript was completed. For this reason, we don't claim any credit for Proposition~\ref{prp:Adm}.

\subsubsection*{Remark} 
For $\bx\in\St(\huQ)$  it follows from the
definitions that $F(\bx)\in\St(\uQ)$ if and only if $F(\bx)=\widebar{\bx}$.
In this case $\bx$ must be of type $(u,u)$. If the string $\bx$ is moreover admissible,
we may assume by Proposition~\ref{prp:Adm} that $\bx=A(\bw)$ for some
$\bw\in\St(\uQ)$, and thus $F(\bx)=\widebar{\bx}=\bw$. In particular, 
$l(\bw)=l(\bx)$.  Now, if $\bw$ were symmetric, we would have 
$l(A(\bw))=(l(\bw)+1)/2=l(\bx)=l(\bw)$ and thus $l(\bw)=1$, which is impossible
for a string. We conclude, that for an admissible string $\bx$, the word
$F(\bx)$ is never a symmetric string, and thus it is an asymmetric string 
if and only if $\bx$ is of type $(u,u)$.

\begin{Def} \label{rem:Adm}
Let us denote by $\AdmSt(\uQ)\subset\St(\huQ)$ resp. 
$\AdmBa(\uQ)\subset\pBa(\huQ)$ the set of strings and
bands for $\huQ$ which are admissible with respect to $F$, and abbreviate
$\Adm(\uQ):=\AdmSt(\uQ)\cup\AdmBa(\uQ)$.  It is clear that $\AdmBa(\uQ)$
is stable under rotations, see~\ref{ssec:StBa} for the definition.  Thus,
we may consider the equivalence classes $[\AdmBa(\uQ)]$ of $\AdmBa(\uQ)$
under rotations.  We will use frequently the following abbreviations:
\begin{align*}
\Adm_s(\uQ) &:=\{\bx\in\AdmSt(\uQ)\mid \bx \text{ is not of type } (p,p)\},\\
[\Adm_b(\uQ)] &:=\{\bx\in\AdmSt(\uQ)\mid \bx \text{ is  of type } (p,p)\}\cup[\AdmBa(\uQ)].
\end{align*}
Thus $\widebar{?}$ induces a map
\[
  (\widebar{?})_s \df\Adm_s(\uQ)\ra\St(\uQ), \bx\mapsto\widebar{\bx}.
\]
By  Proposition~\ref{prp:Adm}, this map is surjective and
$(\widebar{\bx})_s=(\widebar{\by})_s$ implies either $\bx=\by$, or else
$\bx=\by^{-1}$ maps to  a symmetric string.  
Similarly, $\widebar{?}$ induces a map
\[
  [\widebar{?}]\df [\Adm_b(\uQ)]\ra [\pBa(\uQ)],
\]
which is also surjective.  Moreover, $[\widebar{\bx}]=[\widebar{\by}]$ implies
$\bx=\by$, or $\bx=\by^{-1}$ maps to a symmetric band in $\pBa'(\uQ)$, again
by Proposition~\ref{prp:Adm}. The situation may be visualized as follows:\\[.5cm]
\hspace*{2cm}\begin{tikzpicture} % crappy code :-(
\node at (0,0)  {$\St(\huQ)$} ;
\node at (1,.05)  {$\supset$} ;
\node at (2.2,0)  {$\AdmSt(\uQ)$} ;
  \matrix at (4,-.3)
  {\node(l1){$(u,u)$};\\ \node{$(u,p)$}; \\ \node(l3){$(p,u)$};\\
    \node(l4){$(p,p)$};\\ \node(l5){$\phantom{(x,x)}$};\\};
%\draw (l1.north east) -- (l3.south east);
\draw[decorate,decoration=brace] (l1.east) -- (l3.east);
\draw[decorate,decoration=brace] (l4.north east) -- (l5.south east);
\draw[decorate,decoration=brace] (l4.south west) -- (l1.north west);

\node at (0,-1.8)   {$\pBa(\huQ)$};
\node at (1,-1.75)  {$\supset$} ;
\node at (2.2,-1.8) {$\AdmBa(\uQ)$} ;
\path (5.8,.3) node[outer xsep=1](As) {$\Adm_s(\uQ)$} -- +(3, 0) node(St){$\St(\uQ)$} --  ++(0,-1.6)  node[outer xsep=1](Ab) {$\Adm_b(\uQ)$} -- ++(3,0) node(pB){$\pBa'(\uQ)$};
\draw[thick, ->] (As) -- node[above]{$\overline{?}$} (St);
\draw[thick, ->] (Ab) -- node[above]{$\overline{?}$} (pB);
\draw[rounded corners=4pt] (1.2,1.4) rectangle (6.6,-2.1);
\node at (3.9,-2.6) {$\Adm(\uQ)$};
\end{tikzpicture}
\end{Def}

The slightly unintuitive name $\Adm(\uQ)$ for a collection of strings
and bands of $\huQ$ stems from the fact, that one has to know the set
$\Qsp$ in order to determine which strings are admissible. This information, however, is lost in $\huQ$.

\subsection{AR-translate for admissible words} \label{ssec:AR-adm}
We say that $\bx\in\Adm(\uQ)$ is \emph{projective} resp.~\emph{injective}
if $\bx^{\pm 1}=A(\bp)$ for some $\bp\in\cP(\uQ)$ resp.
$\bx^{\pm 1}=A(\bq)$ for some $\bq\in\cQ(\uQ)$, with the notion of projective
resp.~injective strings from Section~\ref{ssec:proj}.
  
In view of Proposition~\ref{prp:Adm} it makes sense to define
also for  $\bx\in\Adm(\uQ)$ non-projective
the operator $\tau_{\uQ}$ as follows: 
  \[
    \tau_{\uQ}(\bx):=\begin{cases} A(\tau_{\uQ}(\bw)) &\text{ if } \bx=A(\bw)
      \text{ for some }\bw\in\St(\uQ)\cup\pBa'(\uQ),\\
 A(\tau_{\uQ}(\bw))^{-1} &\text{ if } \bx=A(\bw)^{-1}
      \text{ for some }\bw\in\St(\uQ),
    \end{cases}
  \]
where we used the AR-operator $\tau_{\uQ}$ from Section~\ref{ssec:proj}.

\begin{Rem} \label{rem:Admtau}
This is well-defined, since the map $A\df\St(\uQ)\cup\pBa'(\uQ)\ra\Adm(\uQ)$
is by construction injective. Note that if $\bx\in\Adm(\uQ)$ is a \emph{string}
of type $(p,p)$, we have $\tau_{\uQ}(\bx)=\bx$. If $\bx$ is a string of type
$(a,b)\in\{u,p\}^2$, then $\tau_\uQ(\bx)$ is also of type $(a,b)$. Note also,
that if $\bx\in\Adm(\uQ)$ is not projective and of type $(p,u)$, we have
$\tau_\uQ(\bx)=\bx[1]$ and  similarly $\tau_\uQ(\bx)=[1]\bx$ if $\bx$ is
of type $(u,p)$; If $\bx\in\Adm(\uQ)$ is non-projective of type $(u,u)$,
at least one of $([1]\bx)[1]$ and $[1](\bx)[1])$ is defined, and 
accordingly $\tau_\uQ(\bx)=([1]\bx)[1]$ or $\tau_\uQ(\bx)=[1](\bx[1])$.
In all these cases, the operator $?[1]$ corresponds to taking the successor
in the corresponding linearly ordered set $\St_{i,\rho}(\huQ)$. 
\end{Rem}

\subsection{Reading admissible strings and bands} \label{ssec:read}
For the combinatorial description of homomorphisms between representations
of skewed-gentle algebras we need the following, slightly unwieldy, definitions.
For $\bx=x_0x_1\cdots x_l\in\Adm(\uQ)$, recall that $\widebar{\bx}$
was defined by~\eqref{eq:cpl}. We agree on the following labeling  of its letters:
\[
\widebar{\bx} =
\begin{cases}
\bar{x}_{-l}\cdots\bar{x}_{-1}\bar{x}_0\cdots\bar{x}_l &\text{if }
\bx\text{ is of type } (p,u),\\
\bar{x}_0\bar{x}_1\cdots\bar{x}_m &\text{else,}
\end{cases}
\text{ and }
m:=\begin{cases}
 l    &\text{if } \bx\text{ is of type } (u,u),\\
 2l-1 &\text{if } \bx\text{ is of type } (u,p),\\
 2l-2 &\text{if } \bx\text{ is of type } (p,p),\\
 l    &\text{if } \bx\text{ is of type }  (b).\\
 \end{cases}
\]
Next, we set
\[
  \widebarbar{\bx}:=\begin{cases} \qquad\widebar{\bx}\qquad &\text{ if }
    \bx\in\Adm_s(\uQ),\\
    \cdots\widebar{\bx}\widebar{\bx}\widebar{\bx}\cdots
    &\text{ else,}
  \end{cases}
\]
where we agree that 
$\cdots\widebar{\bx}\widebar{\bx}\widebar{\bx}\cdots = 
(\widebarbar{x}_i)_{i\in\ZZ}$ with $\widebarbar{x}_{k(m+1)+i}=\widebar{x}_i$ for $0\leq i\leq m$ and $k\in\ZZ$. Thus, we have in particular 
$\widebarbar{x}_i=F(x_i)$ for $0\leq i\leq l$, in any case.  

%$\widebarbar{\bx}_i=F(x_i)$ for $i=0,1,\ldots, l$.  For example, if
%$\bx$ is a string of type $(p,u)$, we have
%$\widebarbar{\bx}=v_{-l}v_{-l+1}\cdots v_l$ with $v_{-i}=F(w_i)^{-1}$ and
%$v_i=F(x_i)$ for $i=1,2,\ldots, l$ and $v_0=\eps^*_p$ for some $p\in\Qspv$.

For $0\leq i\leq l$  we define accordingly
\begin{align*}
\widebarbar{\bx}^{[i]}&:=\begin{cases}
\bar{x}_i\bar{x}_{i+1}\cdots\bar{x}_l &\text{if } \bx\text{ is of type } (u,u),\\
\bar{x}_i\bar{x}_{i+1}\cdots\bar{x}_m &\text{if } \bx\text{ is of type } (u,p),\\
\bar{x}_i\bar{x}_{i+1}\cdots\bar{x}_l &\text{if } \bx\text{ is of type } (p,u),\\
\widebarbar{x}_i\widebarbar{x}_{i+1}\widebarbar{x}_{i+2}\cdots &\text{else,}
\end{cases}\\
\widebarbar{\bx}_{[i]}^{-1}&:=\begin{cases}
\bar{x}_i^{-1}\bar{x}_{i-1}^{-1}\cdots\bar{x}_0^{-1} &\text{if } \bx\text{ is of type } (u,u),\\
\bar{x}_i^{-1}\bar{x}_{i-1}^{-1}\cdots\bar{x}_0^{-1} &\text{if } \bx\text{ is of type } (u,p),\\
\bar{x}_i^{-1}\bar{x}_{i-1}^{-1}\cdots\bar{x}_{-l}^{-1} &\text{if } \bx\text{ is of type } (p,u),\\
\widebarbar{x}_i^{-1}\widebarbar{x}_{i-1}^{-1}\widebarbar{x}_{i-2}^{-1}\cdots &\text{else.}
\end{cases}
\end{align*}
With this at hand,  for $(i,\rho)\in\{1,\ldots,l\}\times\{-1,+1\}$ we set
\[
  \widebarbar{\bx}(i,\rho):=
  \begin{cases}
    \widebarbar{\bx}^{[i]} &\text{ if } t_1(\widebarbar{\bx}_i)=\rho,\\
    \widebarbar{\bx}_{[i-1]}^{-1} &\text{ if } t_1(\widebarbar{\bx}_i)=-\rho.
  \end{cases}
\]
Note, that for $\bx$ of type $(u,u)$  or of type $(u,p)$ we have 
$\widebarbar{x}_0=\II_{i,\sig}^{-1}$. Thus, since $t_1(\widebarbar{x}_0)=*$,
we can not define $\widebarbar{\bx}(0,\rho)$ for $\rho\in\{-1,+1\}$.
It makes however perfect sense to define, along the same lines 
$\widebarbar{\bw}(0,\rho)$, in case $\bw\in\pBa(\huQ)$.

Similarly, we define, 
for $\del\in\{-,+\}$ and $\bx=x_0x_1\cdots x_l\in\Adm(\uQ)$,
the possibly infinite, words
\[
\widehat{\bx}^\del:=\begin{cases}
\phantom{\cdots (\bx')^{-1}\eps_{s(x_0)}^{\del 1}}\bx
    &\text{if }\bx  \text{ is of type } (u,u),\\
\phantom{\cdots (\bx')^{-1}\eps_{s(x_0)}^{\del 1}}\bx'\eps_{t(x_l)}^{\del 1}(\bx')^{-1}
    &\text{if } \bx \text{ is of type } (u,p),\\
\phantom{\cdots} (\bx')^{-1}\eps_{s(x_0)}^{\del 1} \bx'
    &\text{if } \bx \text{ is of type } (p,u),\\
    \cdots (\bx')^{-1}\eps_{s(x_0)}^{\del 1}\bx'\eps_{t(x_l)}^{\del 1}(\bx')^{-1}\cdots 
    &\text{if } \bx \text{ is of type } (p,p),\\
    \qquad\qquad\;\;\cdots\bx\bx\bx\cdots &\text{if }\bx\text{ is of type } (b)
  \end{cases}
\]
where $\bx'$ is obtained from $\bx$ by removing the punctured letters. 
Again, by insisting  that 
$(\widehat{\bx}^\del)_i=x_i$ for $i=1,2,\ldots,l-1$,
we obtain in all cases a well-defined word.
Next, for $(i,\rho)\in\{1,\ldots,l\}\times\{-1,+1\}$, we set
\[
  \widehat{\bx}^\del(i,\rho):=
  \begin{cases}
    (\widehat{\bx}^\del)^{[i]} &\text{ if } t_1(\widehat{\bx}^\del_i)=\rho,\\
    (\widehat{\bx}^\del_{[i-1]})^{-1} &\text{ if } t_1(\widehat{\bx}^\del_i)=-\rho,
  \end{cases}
\]
where $(\widehat{\bx}^{\del})^{[i]}$ and $(\widehat{\bx}^{\del}_{[i]})^{-1}$
are defined similarly as $\widebarbar{\bx}^{[i]}$ and 
$\widebarbar{\bx}^{-1}_{[i]}$ above. 

We will need  also $\widehat{\bx}^\del(0,\rho)$ in case $\bx\in\pBa(\huQ)$,
which is then defined correspondingly.  We observe, that we have trivially
\begin{equation} \label{eq:obspm}
  \widehat{\bx}^+(i,\rho)\preceq\widehat{\bx}^-(i,\rho),
\end{equation}
whenever this is defined, since $\eps_j\prec \eps_j^{-1}$ in $\cL(\huQ)$
for all loops.

\section{Quivers for modules and homomorphisms} \label{sec:Quiv}
\subsection{Quivers for admissible words}
\label{ssec:Hw}
Let from now on $\uQ$ be a skewed-gentle quiver. Thus, in particular
all special arrows are loops. Following closely~\cite[Sec.~3]{CB89b},
we construct for each admissible $\bx\in\Adm(\uQ)$ a morphism
\[
  G_\bx\df \uH(\bx) \ra\uQ 
\]
of polarized quivers. Recall, that 
the elements of $\Adm(\uQ)$ are precisely those strings or bands for the
associated gentle quiver $\huQ$ which are  of the form
$A(\bw)^{\pm 1}$ for $\bw\in\St(\uQ)\cup\pBa(\uQ)$,
see Proposition~\ref{prp:Adm}.

\begin{table}[h]
\begin{align*}
  \def\objectstyle{\scriptstyle}
  \sfA_n\colon&  \xymatrix{\qquad 1\ar@{-}[r]^>>>>{\nu_1}&2\ar@{-}[r]^{\nu_2} &\cdots \ar@{-}[r]^{\nu_{n-2}} &{n\!\!-\!1\!}\ar@{-}[r]^>>>>>{\nu_{n-1}}& n}\\
  \sfD'_n\colon&  \xymatrix{\qquad 1\ar@{-}[r]^>>>>{\nu_1}&2\ar@{-}[r]^{\nu_2} &\cdots \ar@{-}[r]^{\nu_{n-2}} &{n\!\!-\!\!1}\ar@{-}[r]^>>>>>{\nu_{n-1}}& n \ar@(ur,dr)[]^{\eta_1}}\\
     &\!\!\!\xymatrix{\ar@(ul,dl)[]_{\eta_0} 1\ar@{-}[r]^{\nu_1}&2\ar@{-}[r]^{\nu_2} &\cdots \ar@{-}[r]^{\nu_{n-2}} &{n\!\!-\!\!1}\ar@{-}[r]^>>>>>{\nu_{n-1}}& n} \\     
  \sftA_n\colon&  \xymatrix{\qquad 1\ar@{-}[r]^>>>>{\nu_1}&2\ar@{-}[r]^{\nu_2} &\cdots \ar@{-}[r]^{\nu_{n-2}} &{n\!\!-\!\!1}\ar@{-}[r]^>>>>>{\nu_{n-1}}&\ar@{-}[lld]^{\nu_n} n\\
              &&\ar@{-}[llu]^{\nu_0}0}\\
    \sftD'_n\colon&  \xymatrix{\ar@(ul,dl)[]_{\eta_0} 1\ar@{-}[r]^{\nu_1}&2\ar@{-}[r]^{\nu_2} &\cdots \ar@{-}[r]^{\nu_{n-2}} &{n\!\!-\!\!1}\ar@{-}[r]^>>>>>{\nu_{n-1}}& n \ar@(ur,dr)[]^{\eta_1}}\\   
\end{align*}
\caption{Underlying graphs of the quivers $\uH(\bx)$}
\label{tab:AD-graphs}
\end{table}

We consider first the case when $\bx=x_0x_1\cdots x_n$ is a string (for $\huQ$).  
Then the underlying graph of $H(\bx)$ is a subgraph of the diagram
$\sftD'_n$ from Table~\ref{tab:AD-graphs}.  If $x_0$ is unpunctured,
we remove the loop $\eta_0$, else $G(\eta_0)=\eps_{s(x_0)}$.  
Similarly, if $x_n$ is unpunctured, the loop $\eta_1$ is removed and else
$G_\bx(\eta_1)=\eps_{t(x_n)}$.
For $i=1,2,\ldots, n-1$ the edge $\nu_i$ is oriented to the left if and only if $x_i$ is a direct letter and we set $G_\bx(\nu_i)=x_i$.
Else, $\nu_i$ is oriented to the right and we set $G_\bx(\nu_i)=x_i^{-1}$.
Finally, $G_\bx(i):=t(x_i)$ for $i=1,2,\ldots, n$.

If $\bx=x_0x_1\cdots x_n$ is a band for $\huQ$, the underlying graph of
$H(\bx)$ is of type $\sftA_n$ from Table~\ref{tab:AD-graphs}. In this
case, all letters of $\bx$ are direct or inverse ordinary letters. For
$i=0,1, \ldots, n$ we orient $\nu_i$ anti-clockwise if $x_i$ is a direct letter
and set $G_\bx(\nu_i)=x_i$. Else, $\nu_i$ is oriented clockwise and we set
$G_\bx(\nu_i)=x_i^{-1}$. As for the vertices, we set $G_\bx(i):=t(x_i)$
for $i=0, 1, \ldots, n$.

We can upgrade $H(\bx)$ to a polarized quiver $\uH(\bx)$ by declaring the
loops to be the only special arrows, and taking $s_1(\alp)=s_1(G_\bx(\alp))$
as well as $t_1(\alp)=t_1(G(\alp))$ for all arrows $\alp\in H_1(\bx)$. 

By definition,  the \emph{boundary vertices} of $\uH(\bx)$ are 
precisely the vertices $j\in H_0(\bx)$ with valency 
$\operatorname{Val}_{\uH(\bx)}(j)\leq 1$. Here,
\[
\operatorname{Val}_{\uH}(j):=\abs{\{\alp\in H_1\mid s(\alp)=j\}\cup
\{\bet\in H_1\mid t(\bet)=j\}}.
\]
Thus, the hereditary quivers of type $\sftA_n$ and of type $\sftD_n$ have no boundary vertices. 
The quivers of type $\sfA_1$ and of type $\sfD'_n$ have exactly one boundary vertex, 
and the quivers of type $\sfA_n$ with $n\geq 2$ have exactly two boundary vertices.

\subsection{The quiver
  \texorpdfstring{$H_\uQ(\bx,\by)$}{H(x,y)}} \label{ssec:Hvw}
To each pair of elements $\bx,\by\in\Adm(\uQ)$ we construct, starting
from $G_\bx\df \uH(\bx)\ra\uQ$ and $G_\by\df \uH(\by)\ra\uQ$ a decorated quiver
$H_\uQ(\bx,\by)$ with \emph{three} kinds of arrows. It is closely related to
the construction of $H^{\by,\bx}$ in~\cite[Sec.~3.3]{Ge99}.

\begin{align*}
H_0(\bx,\by) &:=\{(j,i)\in H_0(\by)\times H_0(\bx)\mid G_\by(j)=G_\bx(i)\},\\
H_{1,+}(\bx,\by) &:=\{(\bet,\alp)_+\in (\Hord(\by)\times\Hord(\bx))\cup
               (\Hsp(\by)\times\Hsp(\bx)) \mid G_\by(\bet)=G_\bx(\alp)\},\\
H_{1,\times} (\bx,\by)&:=\{(\nu,\mu)_\times\in \Hord(\by)\times\Hord(\bx)\mid
  G_\by(\nu)=G_\bx(\mu)\in\Qsp\},\\
H_{1,o}(\bx,\by) &:= \{(\nu,\eta)_\ominus\in \Hord(\by)\times \Hsp(\bx)\mid G_\by(\nu)=G_\bx(\eta)\}\\
             &\ \cup \{(\eta,\mu)_\oplus\in \Hsp(\by)\times \Hord(\bx)\mid G_\by(\eta)=G_\bx(\mu)\},\\
  H_1(\bx,\by) &:= H_{1,+}\cup H_{1,\times}\cup H_{1,o},
\end{align*}
with start- and endpoints of the arrows given by
\begin{alignat*}{2}
s(\bet,\alp)_+ &=(s(\bet),s(\alp)), &\qquad t(\bet,\alp)_+&=(t(\bet),t(\alp)),\\
s(\nu,\mu)_\times &=(t(\nu),s(\mu)), &\qquad t(\nu,\mu)_\times&=(s(\nu), t(\mu)),\\
s(\nu,\eta)_\ominus &=(t(\nu),s(\eta)), &\qquad t(\nu,\eta)_\ominus &=(s(\nu), t(\eta)),\\
    s(\eta,\mu)_\oplus &=(s(\eta),s(\mu)), &\qquad t(\eta,\mu)_\oplus &=(t(\eta),t(\mu)).
\end{alignat*}

Finally, we highlight certain vertices of $H(\bx,\by)$ by one of the following
colors: red, orange and purple. For the red vertices we use the notation from
Section~\ref{ssec:read}.
\begin{align*}
\redx{\Hred}  &:= \{(j,i)\in H_0(\bx,\by)\mid
           \widebarbar{\by}(j,\rho)>\widebarbar{\bx}(i,\rho) \text{ and }
           \Del(\widebarbar{\by}(j,\rho),\widebarbar{\bx}(i,\rho))=0\\
       & \hspace*{9cm} \text{ for some }\rho\in\{-1,1\}\},\\
\orax{\Hora}  &:= \{ t(\nu,\mu)_\times \mid (\nu,\mu)_\times\in H_{1,\times}(\bx,\by)\},\\
\purx{\Hpur}  &:= \{ t(\nu,\mu)_* \mid (\nu,\mu)_*\in H_{1,o}(\bx,\by)\}.\\
\end{align*}
Note that a red vertex can also receive an orange label or a purple label.
It can even receive a second red label if it is an isolated vertex. 
However, an orange and a purple label never occur at the same vertex.

For $(j,i)\in H_0(\bx,\by)$ we define $\val(j,i)$ to be the number of arrows
in $H(j,i)$ which are incident with $(j,i)$.   Moreover, we say that
$(j,i)$ is a \emph{boundary point} of $H(\bx,\by)$ is $j$ is a boundary point
of $H(\by)$ or $i$ is a boundary point of $H(\bx)$ in the sense of
the end of Section~\ref{ssec:Hw}. See Section~\ref{ssec:exHxy} for an example.

\begin{Rem}
We observe that for each
$(j,i)\in H_0(\bx,\by)$ we have $\val(j,i)\leq 2$.

It follows from the definitions, that for
$(j,i)\in H_0(\bx,\by)$ and $\rho\in\{-1,+1\}$
the right inextensible (or right periodic) words
$\widebarbar{\bx}(i,\rho)$ and $\widebarbar{\by}(j,\rho)$ are comparable.

Finally, we observe that $(j,i)\in\Hred(\bx,\by)$ implies $\val(j,i)\leq 1$.
\end{Rem} 

\subsection{h-lines and kisses} For $\bx,\by\in\Adm(\uQ)$ we look at
the connected components of the quiver $H_\uQ(\bx,\by)$ from the previous
section.

\begin{Def} \label{def:lines} Let $\bx,\by\in\Adm(\uQ)$.\\
We consider the following two subquivers 
\[
 H_\uQ^+(\bx,\by):=(H_0(\bx,\by), H_{1,+}(\bx,\by))\ \subset\    
 H^o_\uQ(\bx, \by):=(H_0(\bx, \by), H_{1,+}(\bx,\by)\cup H_{1,o}(\bx,\by)).
\]
of $H_\uQ(\bx, \by)$. With this at hand, we define:

(1) A connected component of $H^+_\uQ(\bx,\by)$, which
contains no red, orange or purple vertices is a \emph{real h-line} of $H_\uQ(\bx,\by)$. The set of real h-lines of 
$H_\uQ(\bx,\by)$ is denoted by $\cR_\uQ(\bx,\by)$.

(2) A connected component of $H^o_\uQ(\bx, \by)$,
which contains no red or orange vertices, is a \emph{$\tH$-line.}
A connected component of $H^+_\uQ(\bx,\by)$, which is contained in a $\tH$-line, is a \emph{h-line}.

(3)  A connected component of the quiver $H_\uQ(\bx,\by)$, which contains no red vertex, is a \emph{long h-line}. The set of
long h-lines of  $H_\uQ(\bx,\by)$ is denoted by $\cH_\uQ(\bx,\by)$.

(4) A real h-line $L\in\cR_\uQ(\bx,\by)$,  is a \emph{kiss} if none of its (at most 2)  endpoints is a boundary point of 
$H_\uQ(\bx,\by)$.

(5)  Suppose that for 
$(j,i)\in H_0(\by)\times H_0(\bx)$ we have 
\[
  \widebarbar{\by}(j,\rho)=\widebarbar{\bx}(i,\rho)\text{ for both }
  \rho\in\{-1,+1\},
\]
then it is easy to see that we have necessarily $\bx\sim\by$, and moreover $(j,i)$ lies
on a component $L$ of $H(\bx,\by)$, whose underlying quiver is isomorphic
to the one of $H(\bx)\cong H(\by)$.  We say that in this situation $L$ is a 
\emph{generalized diagonal}.   
Such a component is in particular a real h-line.
\end{Def}

\begin{Rem}
(1) Obviously, each vertex of a $\tH$-line is contained in precisely one $h$-line.
We will find in Section~\ref{ssec:chls} alternative characterization of the different types of h-lines, 
which allow us to uncover further relations between these lines.  
In particular, we will verify the non-trivial fact, that
each real h-line is indeed a h-line, as the name suggests. 
See~Proposition~\ref{prp:bij-hl} for more details.
We will also prove the crucial fact that each real h-line is (in general properly) contained in a 
unique long h-line, and that this inclusion induces a bijection 
$\cR_\uQ(\bx,\by)\ra\cH_\uQ(\bx,\by)$.
See also Example~\ref{ssec:exHxy} below for illustration.    

(2) If $L$ is a connected component of the quiver $H(\bx, \by)$, 
then the underlying graph of $L$ is one of the types
$\sfA_n, \sfD'_n, \sftA_n$ or $\sftD'_n$ for some $n$. If $L$ is of type
$\sftA_n$ or of type $\sftD'_n$, then $\bx\sim\by$ is a band, and $L$ is 
a generalized diagonal of $H(\bx,\by)$. This follows easily, since all
vertices of $(j,i)\in H_0(\bx,\by)$ have $\val(j,i)\leq 2$, as we already
observed. In particular, a vertex with a loop is incident to at most one other
arrow.

(3)  If $\Qsp=\emptyset$, i.e.~if $\uQ$ is a gentle, polarized quiver,
the three notions of h-lines coincide. In this case, they are in bijection
with the isomorphism classes of Krause's admissible
triples~\cite[p.~189]{Kr91}.  See also Section~\ref{ssec:triph}.

(4) If  $\bx$ and $\by$ both don't have punctured letters, the quiver
$H_\uQ(\bx,\by)$ has no loops, and thus each h-line is a real h-line.  

(5) If $\bx,\by\in\Adm_b(\uQ)$, i.e. if $H_\uQ(\bx,\by)$ has no boundary
vertices, then trivially \emph{all} real h-lines are kisses. 
%i.e. $\cK_\uQ(\bx,\by)=\cR_\uQ(\bx,\by)$. 
On the other hand, if $\bx\in\AdmSt_s(\uQ)$, then the diagonal $\id_\bx\in\cH_\uQ(\bx,\bx)$ is \emph{not} a kiss, since it involves boundary points.
If $\uQ$ is gentle,  our notion  of a kiss is equivalent to the notion of a kiss in~\cite[Sec.~4]{BY20}, though we also allow kisses, which involve bands.  
\end{Rem}

\subsection{Elementary properties of h-lines}
Consider a connected component of $H_\uQ(\bx,\by)$ of type 
$\sfA$. Thus, the underlying graph of $L$ is 
\[ 
\abs{L} =\xymatrix{(j_0, i_0) \ar@{-}[r]^{\lam_1} &(j_1, i_1) \ar@{-}[r]^{\lam_2}&
    \cdots \ar@{-}[r]^{\lam_l} & (j_l, i_l)}
\]
and  we have a sequence $(\ome_1,\ome_2,\ldots, \ome_l)\in\{-,+\}^l$,
such that $\lam_k$ points to the left if and only if $\ome_k=+$. Moreover,
each arrow is of the form $\lam_k=(\bet_k,\alp_k)_{d(k)}$ with
$d(k)\in\{+, \times, \ominus, \oplus\}$, where $G_\by(\bet_k)=G_\bx(\alp_k)\in Q_1$.
Moreover, we set 
\[
\kap_k:=\begin{cases} t_1(G_\by(\bet_{k+1})) &\text{if } \ome_k=+,\\
    s_1(G_\by(\bet_{k+1})) &\text{if } \ome_k=-\end{cases}
\text{ for } 0\leq k<l, \text{ and }
\kap_l:=\begin{cases} -s_1(G_\by(\bet_{l})) &\text{if } \ome_l=+,\\
    -t_1(G_\by(\bet_{l})) &\text{if } \ome_l=-,\end{cases}
\]
where we recall that $s_1(\vph)$ and $t_1(\vph)$ are the polarizations
of an arrow $\vph$ of the skewed-gentle polarized quiver $\uQ$.
Now, consider for a vertex  $(j_k, i_k)\in L_0$  (i.e. $0\leq k\leq l$),  the possibly right infinite words from Section~\ref{ssec:read}:
\begin{align*}
    \widebarbar{\bx}(i_k,\kap_k) &= x_1 x_2\cdots, &
    \widebarbar{\by}(j_k,\kap_k) &= y_1 y_2\cdots,\\
    \widehat{\bx}^+(i_k,\kap_k) &= x_1' x_2'\cdots, &
    \widehat{\by}^+(j_k,\kap_k) &= y_1' y_2'\cdots,\\ 
  \widehat{\bx}^-(i_k,\kap_k) &= x_1'' x_2''\cdots, &
    \widehat{\by}^-(j_k,\kap_k) &= y_1'' y_2''\cdots.\\  
\end{align*}
In Table~\ref{tab:arr-let} we have worked out the precise  relations
between the arrows of $L$ and the first $l-k$  letters of the various words.
\begin{table}[hbt!]
\begin{tabular}{|c|c|c|c|c|}\hline
&\multicolumn{4}{|c|}{$\ome_{j+k}=+$}\\\hline
letter\textbackslash d(k+j) & + & $\times$ & $\oplus$ & $\ominus$\\\hline
$x_j=y_j$ & $\alp^{(*)}$ & $\alp^*$ &  $\alp^*$ & $\alp^*$ \\\hline
$x'_j$ & $\alp$ & $\alp$ &  $\alp$ & $\alp$ \\\hline
$x''_j$ & $\alp$ & $\alp$ &  $\alp$ & $\alp^{-1}$ \\\hline
$y'_j$ & $\alp$ & $\alp^{-1}$ &  $\alp$ & $\alp^{-1}$ \\\hline
$y''_j$ & $\alp$ & $\alp^{-1}$ &  $\alp^{-1}$ & $\alp^{-1}$ \\\hline
\end{tabular}
\quad
\begin{tabular}{|c|c|c|c|c|}\hline
&\multicolumn{4}{|c|}{$\ome_{j+k}=-$}\\\hline
letter\textbackslash d(k+j) & + & $\times$ & $\oplus$ & $\ominus$\\\hline
$x_j=y_j$ & $\alp^{(*)}$ & $\alp^*$ &  $\alp^*$ & $\alp^*$ \\\hline
$x'_j$ & $\alp^{-1}$ & $\alp^{-1}$ &  $\alp^{-1}$ & $\alp$ \\\hline
$x''_j$ & $\alp^{-1}$ & $\alp^{-1}$ &  $\alp^{-1}$ & $\alp^{-1}$ \\\hline
$y'_j$ & $\alp^{-1}$ & $\alp$ &  $\alp$ & $\alp$ \\\hline
$y''_j$ & $\alp^{-1}$ & $\alp$ &  $\alp^{-1}$ & $\alp$ \\\hline
\end{tabular}\\[2ex]
\caption{Relation between the arrow $\lam_{k+j}$ and letters $x_j=y_j$ and
$x_j^*, y^*_j$ for $*\in\{', ''\}$,\quad Convention: $G_\bx(\alp_{j+k})=G_\by(\bet_{j+k})=:\alp\in Q_1$ }
\label{tab:arr-let}
\end{table}

It is straightforward to produce in this situation  a table, similar to Table~\ref{tab:arr-let}, for the
first $k$ letters of each of the words $\widebarbar{\bx}(i_k, -\kap_k)$,
$\hbx^{\pm}(i_k,-\kap_k)$,  $\hby^\pm(j_k,-\kap_k)$ on one side and
the arrows $\lam_k, \lam_{k-1}, \ldots, \lam_1$ on the other side. 

The proof of the following lemma is a bit tedious. However, it illustrates
the interplay of our definitions.

 \begin{Lem} \label{lem:hl-basic}
   Let $\uQ$ be a skewed-gentle polarized quiver and
   $\bx, \by\in\Adm(\uQ)$.
   \begin{itemize}
\item[(a)]
A connected component $L$ of $H(\bx,\by)$ is a \textbf{long h-line} if 
and only if for some vertex $(j,i)$ on $L$ we have
\begin{equation} \label{eq:lhl}
\widebarbar{\by}(j,\rho)\leq \widebarbar{\bx}(i,\rho)
\text{ for both } \rho\in\{-1,+1\}.
\end{equation}
Moreover, condition~\eqref{eq:lhl} is independent of the choice of the vertex  $(j,i)\in L_0$.
\item[(b)]
A connected component $L$ of $H^o_\uQ$ is a
\textbf{$\tH$-line} if and only if for each vertex $(j,i)$ on $L$ we have
     \begin{equation} \label{eq:hl}
       \widehat{\by}^+(j,\rho)\preceq \widehat{\bx}^-(i,\rho)
       \text{ for any } \rho\in\{-1,+1\}.
     \end{equation}
Moreover, if $L'$ is any connected component of $H^+_\uQ(\bx,\by)$ and 
condition~\eqref{eq:hl} holds for some $(j_0,i_0)\in L'_0$, then it holds automatically for all
$(j,i)\in L'_0$. 
\item[(c)]
If a connected component $L$ of $H_\uQ^+$ is a
\textbf{real h-line}, for each vertex  $(j,i)$ on $L$ we have
\begin{equation} \label{eq:rhl}
\widehat{\by}^{\del(\rho)}(j,\rho)\preceq 
\widehat{\bx}^{\del(\rho)}(i,\rho) \text{ for  both } 
\rho\in\{-1,+1\} \text{ and certain } \del(\rho)\in\{+, -\}.
\end{equation}  
\end{itemize}
\end{Lem}

\begin{proof}
(a) Consider a connected component $L$ of $H_\uQ(\bx,\by)$.  If $L$
is a generalized diagonal, the claim is trivial.  Else, $L$ is of type
$\sfA$ or of type $\sfD'$.  In the first case, we adopt for $L$ the notations from the beginning of the section. 
It follows then from our definitions  that
\[
\Del(\widebarbar{\bx}(i_k,\kap_k), \widebarbar{\by}(j_k,\kap_k))=
\begin{cases}
   l-k &\text{if } \widebarbar{\bx}(i_k,\kap_k)\neq \widebarbar{\by}(j_k,\kap_k),\\
   l-k+1 &\text{if }\widebarbar{\bx}(i_k,\kap_k)= \widebarbar{\by}(j_k,\kap_k),
\end{cases}
\]
and in any case the letters $x_{l-k+1}, y_{l-k+1} \in (\cL(\uQ),<)$
are comparable. A similar discussion applies for the comparison of 
$\widebarbar{\bx}(i_k,-\kap_k)$ with 
$\widebarbar{\by}(j_k,-\kap_k)$.  However, in this case,
$\Del(\widebarbar{\bx}(i_k,-\kap_k), \widebarbar{\by}(j_k,-\kap_k)\in\{k, k+1\}$.   We conclude:
For any vertex $(j_k,i_k)\in L_0$ we have
$\widebarbar{\by}(j_k,\kap_k)\preceq\widebarbar{\bx}(i_k,\kap_k)$
if and only if $(j_l,i_l)$ is not a red vertex.  
Similarly, 
$\widebarbar{\by}(j_k,-\kap_k)\preceq\widebarbar{\bx}(i_k,-\kap_k)$
if and only if $(j_0,i_0)$ is not a red vertex.

If $L$  is of type $\sfD'$, we may use the same convention, and may
assume that $L$ has a special loop $\eta_0$ at $(j_0,i_0)$. It is 
straightforward to see that in this case the condition 
$\widebarbar{\by}(j_k,-\kap_k)\leq \widebarbar{\by}(i_k,-\kap_k)$
is equivalent to the condition 
$\widebarbar{\by}(j_k,\kap_k)\leq \widebarbar{\by}(i_k,\kap_k)$.
This latter condition is, with the same argument as in the previous case,
equivalent to the fact that $(j_l, i_l)$ is not a red vertex. 

(b) If $L$ is a generalized diagonal, we may assume $\bx=\by$, and our claim follows from~\eqref{eq:obspm}. So, we may assume  that $L$ is of type
$\sfA$ or is of type $\sfD'$. We work out the case when $L$ is of type
$\sfA$.  The case when $L$ is of type $\sfD'$ is similar. 
For convenience, we use for the connected component $L$ of $H^o_\uQ(\bx,\by)$ the same notation as at the beginning of this section,
with the only difference that the label $d(k)=\times$ will not occur. 

Now, suppose first, that $L$ contains no red or orange vertices, and consider
a vertex $(j_k,i_k)\in L_0$. It follows  that
$\hby^+(j_k,\kap_k)=y_0y_1\cdots y_{l-k}\cdots$ and 
$\hbx^-(i_k,\kap_k)=x_0x_1\cdots x_{l-k}\cdots$ have both  at least $l-k+1$
letters, and are possibly right infinite.  Since $(j_l,i_l)$ is not red
or orange, $y_{l-k}\preceq x_{l-k}$ follows. 
Moreover, for $0\leq r <l-k$, the letters $x_r$ and $y_r$  are determined
by $(\lam_{k+r},\ome_{k+r})$ and in fact $x_r=y_r$, 
except when $\ome_{k+r}=-$ and $d(k+r)\in\{\ominus, \oplus\}$.  
In this case $y_r=\eps$ and $x_r=\eps^{-1}$
for $\eps=G_\by(\bet_{k+r})=G_\bx(\alp_{k+r})\in\Qsp$,  by the construction of
the arrows in $H_{1,o}(\bx,\by)$. 
This implies obviously
$\hby^+(j_k,\kap_k)\preceq \hbx^-(i_k,\kap_k)$.  Similarly,
we find that both, $\hbx^-(i_k,-\kap_k)=x'_0x'_1\cdots x'_k\cdots$ and
$\hby^+(i_k,-\kap_k)=y'_0y'_1\cdots y'_k\cdots$ have at least $k+1$ letters
with $y'_k<x'_k$.  In this case, for $0\leq r< k$ the letters $x'_r$ and $y'_r$ are determined by $(\lam_k,\ome_k)$, and we have $x'_r=y'_r$,  except when
$\kap_r=+$ and $d(r)\in\{\ominus, \oplus\}$. In this case we have
$y'_r=\eps$ and $x'_r=\eps^{-1}$ for $\eps=G_\by(\bet_r)=G_\bx(\alp_r)\in\Qsp$.
This implies $\hby^+(j_k,-\kap_k)\preceq\hby^-(i_k,-\kap_k)$.  
The last claim about $L'$ follows by an easy induction argument.

(c) Again, if $L$ is a generalized diagonal, the  claim is trivial.
We use for the real h-line $L\subset H^+_\uQ(\bx,\by)$
of type $\sfA$ 
the labeling scheme from the beginning of the section, however now
$d(k)=+$ for all $k$. Consider $(j_k, i_k)\in L_0$. Then we have clearly
\[
D^\del_k:=\Del(\hby^\del(j_k,\kap_k), \hbx^\del(i_k,\kap_k))\in\{l-k, l-k+1\}. 
\]
Let us first suppose that at $(j_l, i_l)$ starts, in $H_Q(\bx,\by)$, an arrow 
\[
\lam=(\bet,\alp)_d\in H_{1,\times}(\bx,\by)\cup H_{1,o}(\bx,\by).
\]
We have then in view of Table~\ref{tab:arr-let}
\[
D^\del_k=l-k  \text{ and }
\hby^\del(j_k,\kap_k) \prec\hbx^\del(i_k,\kap_k) \quad\text{ for } 
\begin{cases}
    \del\in\{-,+\} & \text{if } d=\times,\\
    \del=+         & \text{if } d=\oplus,\\
    \del=-         & \text{if } d=\ominus.
\end{cases}
\]
Else, we have $x:=x'_{l-k+1}=x''_{l-k+1}$ and $y:=y'_{l-k+1}=y''_{l-k+1}$,
and $\II_{s,-\kap_l}\in\{x, y\}$ for $s=G_\bx(i_l)=G_\by(j_l)$. Since
in this situation $(j_l, i_l)$ is a boundary vertex which is not red,
we have either $x=\II_{s,-\kap_l}=y$ and thus 
$\hby^\del(j_k,\kap_k)=\hbx^\del(i_k,\kap_k)$ for any $\del\in\{-,+\}$,
or we have $y<x$ and thus 
$\hby^\del(j_k,\kap_k)\prec\hbx^\del(i_k,\kap_k)$ for any 
$\del\in\{-,+\}$.  The case when $L$ is of type $\sfD'$ is similar. 
\end{proof}

\begin{Rem}
We will find in Section~\ref{ssec:chls} characterizations of (real) h-lines
that are similar to the characterization of long h-lines in Lemma~\ref{lem:hl-basic}~(a). 
In particular, we will sharpen considerably Lemma~\ref{lem:hl-basic}~(c). 
\end{Rem}

\subsection{The combinatorics of h-lines} \label{ssec:chls}
Let us also restate the crucial result~\cite[Prp.~3~(d)]{Ge99}
(see also  Definition~(2) in~\cite[Sec.~3.3]{Ge99})
in a more symmetric form:

\begin{Lem} \label{lem:ged} 
  Let $\bx,\by\in\Adm(\uQ)$ and suppose that for
  $(j,i)\in H_0(\bx,\by)$ we have
  \[{\widehat{y}^+}(j,\rho)\preceq\widehat{x}^-(i,\rho) \text{ for both }
    \rho\in\{-1,+1\},
  \]  
  then we have also
  \[ \widebarbar{\by}(j,\rho)\leq \widebarbar{\bx}(i,\rho) \text{ for both }
    \rho\in\{-1,+1\}.
   \] 
In particular, in this situation $(j,i)$ belongs to a (unique) 
long h-line of $H_\uQ(\bx, \by)$.  
\end{Lem}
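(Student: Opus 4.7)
The plan is to compare the rays $\by':=\widehat{\by}^-(j,\rho)$ and $\bx':=\widehat{\bx}^-(i,\rho)$ position by position in $\huQ$, and to transfer the outcome to the $\uQ$-rays $\widebarbar{\by}(j,\rho)$ and $\widebarbar{\bx}(i,\rho)$. A direct inspection of the definitions from Section~\ref{ssec:read} shows that the latter arise by applying the polarized quiver morphism $F\df\huQ\ra\uQ$ termwise to the former; this amounts to collapsing every loop-letter $\eps$ or $\eps^{-1}$ to the special letter $\eps^*$ of $\uQ$ while leaving all other letters untouched. Since the argument is symmetric in $\rho$, we fix one $\rho$ at a time and reduce to the following: if $\by'\preceq\bx'$ in the $\huQ$-lex order, then $F(\by')\leq F(\bx')$ in the $\uQ$-lex order.

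If $\by'=\bx'$ there is nothing to prove. Otherwise let $k_0$ be the first position at which $\by'$ and $\bx'$ disagree; then $\by'_{k_0}\prec\bx'_{k_0}$. Since punctured letters do not appear in $\widehat{?}^-$, at position $k_0$ either (a) both letters are non-special and the $\uQ$-order between $F(\by'_{k_0})$ and $F(\bx'_{k_0})$ coincides with the $\huQ$-order, giving $F(\by')<F(\bx')$ at once; or (b) both letters sit at a vertex $(p,-1)$ with $p\in\Qspv$, and as the only special letters appearing in $\widehat{?}^-$ are $\eps_p^{\pm 1}$, we are forced to have $\by'_{k_0}=\eps_p$ and $\bx'_{k_0}=\eps_p^{-1}$. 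In case (b) one has $F(\by'_{k_0})=F(\bx'_{k_0})=\eps_p^*$, so the $F$-images still agree at position $k_0$ and the comparison is postponed to the tails $(\by')^{[k_0+1]}$ and $(\bx')^{[k_0+1]}$.

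For the postponed comparison we invoke the admissibility of $\by$ and $\bx$. Set $\bz:=(\by')^{-1}_{[k_0-1]}=(\bx')^{-1}_{[k_0-1]}$, where the equality uses the agreement of the two rays strictly before $k_0$. Whenever $\by'_{k_0}=\eps_p$ comes from an internal letter of $\by$, the admissibility clause of Section~\ref{ssec:adm} (transported to the ray direction) yields $\bz\succ(\by')^{[k_0+1]}$; likewise $\bz\prec(\bx')^{[k_0+1]}$ whenever $\bx'_{k_0}=\eps_p^{-1}$ is internal to $\bx$. For an $\eps_p^{-1}$ that was \emph{inserted} at a punctured end of $\bx$ (and analogously for $\by$), the construction of $\widehat{\bx}^-$ forces instead $\bz=(\bx')^{[k_0+1]}$. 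Combining the possible subcases produces $(\by')^{[k_0+1]}\preceq(\bx')^{[k_0+1]}$, and we iterate the same analysis on this new pair of tails.

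The iteration either eventually hits a non-special discrepancy, whereupon case (a) provides the strict inequality $F(\by')<F(\bx')$; or every discrepancy throughout the recursion is of the forced type $\eps_p$ vs $\eps_p^{-1}$, in which event the two $F$-images coincide at every position, giving $F(\by')=F(\bx')$ as possibly infinite words. In either eventuality $F(\by')\leq F(\bx')$, which is the sought $\uQ$-inequality. The main subtlety is the case~(b) bookkeeping when $\by'_{k_0}$ or $\bx'_{k_0}$ arises from an inserted $\eps_p^{-1}$ at a punctured end of $\widehat{?}^-$, or from an internal letter of $\by$ (resp.~$\bx$) which is traversed in the ``wrong'' direction by the ray: in those situations the strict admissibility inequalities partly degenerate to equalities involving $\bz$, but enough strictness survives to keep $\preceq$ propagating along the recursion.
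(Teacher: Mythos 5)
The paper does not actually reprove this lemma (it is quoted from~\cite[Prp.~3(d)]{Ge99}), so your attempt has to stand on its own; unfortunately it has a genuine gap. What is fine is the preliminary observation: the completed rays $\widebarbar{\by}(j,\rho)$ and $\widebarbar{\bx}(i,\rho)$ are the termwise $F$-images of $\widehat{\by}^-(j,\rho)$ and $\widehat{\bx}^-(i,\rho)$, and a first discrepancy at an ordinary letter transfers verbatim. The gap is your very first step, ``fix one $\rho$ at a time'': the single-direction statement ``$\widehat{\by}^-(j,\rho)\preceq\widehat{\bx}^-(i,\rho)$ implies $\widebarbar{\by}(j,\rho)\leq\widebarbar{\bx}(i,\rho)$'' is simply false, and the reason is exactly the point your case~(b) glosses over. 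The orientation of a special letter at position $k_0$ of the $\rho$-ray is fixed, via the $A$-construction, by comparing the \emph{full} backward word at that position with the forward word, and this backward word runs through the ray prefix and then continues \emph{past the starting vertex} $j$ (resp.\ $i$) into the $-\rho$-direction. Your $\bz=(\by')^{-1}_{[k_0-1]}$ is only the within-ray truncation of it, so admissibility does not yield ``$\bz\succ(\by')^{[k_0+1]}$'' (nor ``$\bz=(\bx')^{[k_0+1]}$'' at an inserted end), and the tail inequality $(\by')^{[k_0+1]}\preceq(\bx')^{[k_0+1]}$ on which your whole iteration rests can fail. This is precisely why the lemma is stated with \emph{both} values of $\rho$: your argument never uses the hypothesis for the second direction.

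Concretely, let $\uQ$ have vertices $1,2$, one ordinary arrow $\beta$ with $\us(\beta)=(1,+1)$, $\ut(\beta)=(2,+1)$, and special loops $\eps_1,\eps_2$; this is an admissible skewed-gentle polarized quiver. Take $\bx=\II_{1,+}^{-1}\II_{1,-}$, so $\widehat{\bx}^-=\II_{1,+}^{-1}\eps_1^{-1}\II_{1,+}$ and $\widebarbar{\bx}=\II_{1,+}^{-1}\eps_1^*\II_{1,+}$, with $i=1$ its unique vertex, and take $\by=A(\bw)$ for the asymmetric string $\bw=\II_{2,+}^{-1}\,\eps_2^*\,\beta\,\eps_1^*\,\beta^{-1}\,\eps_2^*\,\beta\,\eps_1^*\,\II_{1,+}$, with $j=3$ the vertex of the first $\eps_1^*$; there the $A$-rule produces the direct letter $\eps_1$, because the backward word $\beta^{-1}\eps_2^*\II_{2,+}$ beats the forward word $\beta^{-1}\eps_2^*\beta\eps_1^*\II_{1,+}$ (note $\beta<\II_{2,+}$). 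For $\rho=-1$ one gets $\widehat{\by}^-(3,-1)=\eps_1\beta^{-1}\cdots\prec\eps_1^{-1}\II_{1,+}=\widehat{\bx}^-(1,-1)$, yet $\widebarbar{\by}(3,-1)=\eps_1^*\beta^{-1}\cdots>\eps_1^*\II_{1,+}=\widebarbar{\bx}(1,-1)$ since $\II_{1,+}<\beta^{-1}$; moreover the tails after the special discrepancy compare as $\beta^{-1}\cdots\succ\II_{1,+}$, so your claimed tail inequality fails as well. The lemma itself is not contradicted only because the hypothesis fails for $\rho=+1$: $\widehat{\by}^-(3,+1)=\beta^{-1}\eps_2^{-1}\II_{2,+}\succ\II_{1,+}=\widehat{\bx}^-(1,+1)$. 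A correct proof must therefore intertwine the two directions (the backward comparison extracted at a special discrepancy in direction $\rho$ has to be confronted with the hypothesis in direction $-\rho$), which is what the cited result of~\cite{Ge99} accomplishes; as written, your proof does not.
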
 

\begin{proof}
For the convenience of the reader, we work out in detail  the most interesting case. The idea is exactly the same as in~\cite[Sec.~7.6]{Ge99}.  Let us assume that 
$\widehat{y}^+(j,\rho)\prec\widehat{x}^-(i,\rho)$  for both $\rho\in\{-1,+1\}$,
and that $(j,i)$ does not belong to a generalized diagonal of $H_\uQ(\bx,\by)$.
Let us assume furthermore that we have factorizations
\begin{align*}
    \hbx^-(i,+1) &= \bz_+\eps_p^{-1}\hbx_+, & \hbx^-(i,-1)&=\bz_-\eps_q^{-1}\hbx_-,\\
    \hby^+(j,+1) &= \bz_+\eps_p\hby_+, & \hby^+(j,-1)&=\bz_-\eps_q\hby_-,\\    
\end{align*}
for certain $\eps_p,\eps_q\in\Qsp$.  This means that the pairs 
$(\eps_p, \eps_p^{-1})$ and $(\eps_q,\eps_q^{-1})$ both correspond to an 
arrow in $H_{1,o}(\bx,\by)\cup H_{1,\times}(\bx,\by)$.
We have then also factorizations
\begin{align*}
    \bbbx(i,+1) &= \bbz_+\eps_p^{-1}\bbx_+, & \bbbx(i,-1)&=\bbz_-\eps_q^{*}\bbx_-,\\
    \bbby(j,+1) &= \bbz_+\eps_p^*\bby_+, & \bbby(j,-1)&=\bbz_-\eps_q^*\bby_-,\\    
\end{align*}
with $l(\bz_\pm)=l(\widebar{\bz}_\pm)$.
Thus, using the definitions, we obtain the following inequalities:
\begin{align*}
\bbz_+^{-1}\bbz_-\eps^*_q\bbx_- &\leq \bbx_+,&
\bbz_-^{-1}\bbz_+\eps^*_p\bbx_+ &\leq \bbx_-,\\
\bbz_+^{-1}\bbz_-\eps^*_q\bby_- &\geq \bby_+&
\bbz_-^{-1}\bbz_+\eps^*_p\bby_+ &\geq \bby_-.\\
\end{align*}
Now, if we suppose, for example $\bbbx(i,+1)<\bbby(j,+1)$, or equivalently
$\bbx_+<\bby_+$, we get the following 
chain of inequalities:
\begin{multline} \label{eq:chain}
(\bbz_+^{-1}\bbz_-\eps_q^*)\bbz_-^{-1}\bbz_+\eps_p^*\bbx_+\leq(\bbz_+^{-1}\bbz_-\eps_q^*)\bbx_-\leq\bbx_+<\\
<\bby_+\leq(\bbz_+^{-1}\bbz_-\eps_q^*)\bby_-\leq (\bbz_+^{-1}\bbz_-\eps_q^*)\bbz_-^{-1}\bbz_+\eps^*_p\bby_+.
\end{multline} 
In other words, with $\bz:= (\bbz_+^{-1}\bbz_-)\eps_q^*(\bbz_+^{-1}\bbz_-)^{-1}\eps_p^*$ we have
\[
\bbz\bbx_+\leq\bbx_+<\bby_+\leq\bbz\bby_+,
\]
which is absurd, since this implies obviously 
\[
\Del(\bbz\bbx_+,\bbz\bby_+)=l(\bz)+\Del(\bbx_+,\bby_+).
\]
This is impossible in the lexicographic order. 
Note,  that $\bbbx(i,+1)<\bbby(j,+1)$, via equation~\eqref{eq:chain},
implies $\bbbx(i,-1)<\bbby(j,-1) \Iff \bbx_-<\bby_-$.
\end{proof}

\noindent Note, that the last claim of the above Lemma follows from Lemma~\ref{lem:hl-basic}~(a).

For a connected component $L$ of $H_\uQ(\bx,\by)$ let
\[
\cC(L):= \{L'\subset H^+_\uQ(\bx,\by)\text{ connected component }
\mid L'_0\subset L_0\}.
\]
We consider on $\cC(L)$ the relation 
\[
L'> L'' \iff \text{ there exists } 
\lam\in H_{1,o}(\bx,\by) \cup H_{1,\times}(\bx,\by) \text{ with }
s(\lam)\in L' \text{ and } t(\lam)\in L''.
\]
By a slight abuse of notation, we write also $>$ for the 
transitive relation generated by $>$. It is easy to see, that
$\cC(L)=(\cC(L), >)$ is a poset, and moreover the Hasse diagram 
of $\cC(L)$ is of type $\sfA_{\abs{\cC(L)}}$. In fact, if 
$L$ is a generalized diagonal, we have $\cC(L)=\{L\}$.  Else, 
$L$ is of type $\sfA$ or of type $\sfD'$ and our claim is easy
to verify.  
%If, for example,  $L$ is a long h-line, any maximal element $L'\in\cC(L)$ is a real h-line. 

\begin{Lem} \label{lem:1max}
If $L$ is a long h-line, the poset $\cC(L)$ has a unique maximal
element $L^+$.  Since the Hasse diagram of $\cC(L)$ is of type 
$\sfA$, this poset has at most two minimal elements. 
\end{Lem}

\begin{proof}
In view of the above discussion, we only have to show the first claim.  Suppose, that our poset  $\cC(L)$   has at least two maximal elements.  Since the Hasse diagram of $\cC(L)$ is of type
$\sfA$, this implies that there exists a minimal element $L^-\in\cC(L)$, which is the endpoint of two different arrows in
the Hasse diagram.  This means, in view of Table~\ref{tab:arr-let},
that for each vertex $(j,i)\in L^-_0$ we have
\[
\hby^-(j,\kap)\succ \hbx^+(i,\kap) \text{ for any } \kap\in\{-1,+1\}.
\]
This implies by Lemma~\ref{lem:ged}
\[
\widebarbar{\by}(j,\kap) \geq \widebarbar{\bx}(i,\kap) \text{ for any } \kap\in\{-1,+1\}.
\]
Since $L^-\subset L$, which is a long h-line, we conclude from
Lemma~\ref{lem:hl-basic}~(a) that $L$ is a generalized diagonal.
However, in this case $\abs{\cC(L)}=1$, a contradiction.
\end{proof}

This allows us to prove the following  result.

\begin{Prop} \label{prp:bij-hl}
  Let $\uQ$ be a skewed-gentle polarized quiver and $\bx,\by\in\Adm(\uQ)$.  Then
  the decorated quiver $H_\uQ(\bx,\by)$ has the following properties:
\begin{itemize}
\item[(a)]
The map $\cH_\uQ(\bx,\by)\ra\cR_\uQ(\bx,\by), L\mapsto L^+$
is bijective.  In particular, each real line is contained
in a unique long h-line.  Moreover, this bijection preserves
the type of the long resp. real h-lines. 
\item[(b)]   
A connected component $H$ of $H^+_\uQ(\bx,\by)$ is a real 
h-line if for some vertex $(j,i)\in H_0$ we have
\begin{equation} \label{eq:rhl2}
    \hby^\del(j,\kap)\prec\hbx^\del(i,\kap) \text{ for all }
(\del,\kap)\in\{-,+\}\times\{-1,+1\}.
\end{equation}
Moreover, the validity of condition~\eqref{eq:rhl2} is  
independent of the choice of the vertex $(j,i)\in H_0$. 
\item[(c)]
A connected component $H$ of $H^+_\uQ(\bx,\by)$ is a h-line,
if and only if for some vertex $(j,i)\in H_0$ we have
\begin{equation} \label{eq:hl2}
   \hby^+(j,\kap)\preceq\hbx^-(i,\kap) \text{ for all }
   \kap\in\{-1,+1\}.
\end{equation}
Moreover, the validity of condition~\eqref{eq:hl2} is  
independent of the choice of the vertex $(j,i)\in H_0$. 
\end{itemize}
\end{Prop}

\begin{proof}
 (a) If $L$ is a long h-line, by definition it has no red
 vertex.  Moreover, by construction, no arrow from 
 $H_{1,o}(\bx,\by)\cup H_{1,\times}(\bx,\by)$ ends in a vertex of 
 the maximal element $L^+$ of $\cC(L)$.  Thus, $L^+$ is indeed
 a real h-line.  Since $\cC(L)$ contains, by Lemma~\ref{lem:1max},
a unique maximal element $L^+$, we have indeed a well-defined
map $\cH_\uQ(\bx,\by)\ra\cR_\uQ(\bx,\by), L\mapsto L^+$.  
This map is obviously injective, since each connected component
of $H^+_\uQ(\bx,\by)$ is, by construction, contained in a unique
connected component of $H_\uQ(\bx,\by)$. If in turn $H$ is a
real h-line, it follows from Lemma~\ref{lem:hl-basic}~(c) and
equation~\eqref{eq:obspm} that for all $(j,i)\in H_0$ we have
$\hby^+(j,\kap)\succ \hbx^-(i,\kap)$ for all $\kap\in\{-1,+1\}$. 
By Lemma~\ref{lem:ged} follows that $(j,i)$ belongs to (unique) long h-line
by Lemma~\ref{lem:hl-basic}~(a). Thus, our map is also surjective.

(b)  If $H$ is a real h-line, we have $H=L^+$ for a unique 
long h-line $L$ by~(a). In view of the description of $\cC(L)$
it is now straightforward to check, with Table~\ref{tab:arr-let}
and the definitions, that condition~\eqref{eq:rhl2} holds
for all vertices $(j,i)\in H_0$. Conversely, if condition~\eqref{eq:rhl2} holds for some $(j,i)\in H_0$, in view
of Table~\ref{tab:arr-let}, no arrow from 
$H_{1,o}(\bx,\by)\cup H_{1,\times}(\bx,\by)$ can end in an extremal
vertex of $H$. Thus, $H$ has no orange or purple vertex.
Since condition~\eqref{eq:rhl2} implies moreover,
by  equation~\eqref{eq:obspm} and Lemma~\ref{lem:ged}, that
$H$ is contained in a long h-line. Thus, $H$ has no red vertex, either.  

(c) Let $H\subset H^+_\uQ(\bx,\by)$ be a connected component. 
If $H$ is a  h-line, it is by definition contained in a $\tH$-line.
Thus,  by Lemma~\ref{lem:hl-basic}~(b),
condition~\eqref{eq:hl2} is fulfilled for each $(j,i)\in H_0$.
Conversely, if condition~\eqref{eq:hl2} is fulfilled for some
$(j,i)\in H_0$, we consider the unique connected component 
$\widetilde{H}$ of $H^o_\uQ(\bx,\by)$ which contains $H$. 
By Lemma~\ref{lem:ged}, $\widetilde{H}$ is contained in a long
h-line $L$. Thus, we can consider  the poset 
$\cC(\widetilde{H})$, which consists of the connected components
$H'$ of $H^+_\uQ(\bx,\by)$, which are contained in $\widetilde{H}$.
Note that $\cC(\widetilde{H})\subset\cC(L)$.  Thus, by 
Lemma~\ref{lem:1max}, $\cC(\widetilde{H})$ contains a unique 
maximal element $H^+> H$. In fact, $H^+=L^+$, since otherwise a boundary vertex of $H^+$ would be the endpoint of an arrow from
$H_{1,\times}(\bx,\by)$, and thus in view of Table~\ref{tab:arr-let}
we would have $\hby^+(j,\kap)\succ\hbx^-(i,\kap)$ for some
$\kap\in\{-1,+1\}$ and $(i,i)\in H_0$, a contradiction.
Now, since $L^+\subset\widetilde{H}\subset L$ it is clear that
$\widetilde{H}$ is a $\tH$-line, and thus $H$ is a h-line.
\end{proof}

\begin{Rem}
(1) Note, that by Proposition~\ref{prp:bij-hl}~(c), our notion of h-lines
is equivalent to the original Definition in~\cite[Sec.~3.3]{Ge99}.  Moreover,  this result implies a characterization of $\tH$-lines, which
is slightly sharper than Lemma~\ref{lem:hl-basic}~(b): It is in fact
sufficient to verify condition~\eqref{eq:hl} for just one vertex
$(j,i)\in L_0$.

(2) By comparing parts~(b) and~(c) of Proposition~\ref{prp:bij-hl},
we see, with the help of Equation~\eqref{eq:obspm}, that real h-lines
are in particular h-lines. 

(3) In Lemma~\ref{lem:1max} we may replace the long h-line $L$ by a $\tH$-line $\tilde{L}$ and arrive 
with essentially the same argument to the same conclusion. In particular, each $\tH$-line $\tilde{L}$
contains a unique real $h$-line. Thus, in view of (2), we have also a bijection (via inclusion)
between the set of real $h$-lines and the set of $\tH$-lines.  However, we do not need this fact here. 
\end{Rem}

\subsection{Dual h-lines} \label{ssec:dualh}
Given $\bx,\by\in\Adm(\uQ)$, for the calculation of E-invariant, it will be
convenient to read off from $H_\uQ(\bx,\by)$ the different types of
h-lines in  $H_\uQ(\by,\bx)$.
To this purpose we need to introduce a further set of color labels for
the vertices of $H_\uQ(\bx,\by)$, namely by the colors blue, cyan and teal.
\begin{align*}
\blux{\Hblu}  &:= \{(j,i)\in H_0(\bx,\by)\mid
           \widebarbar{\by}(j,\rho)<\widebarbar{\bx}(i,\rho) \text{ and }
           \Del(\widebarbar{\by}(j,\rho),\widebarbar{\bx}(i,\rho))=0\\
       & \hspace*{9cm} \text{ for some }\rho\in\{-1,1\}\},\\
\cyax{\Hcya} &:= \{ s(\nu,\mu)_\times \mid (\mu,\nu)_\times\in H_{1,\times}(\bx,\by)\},\\
\teax{\Htea}  &:= \{ s(\nu,\mu)_* \mid (\mu,\nu)_*\in H_{1,o}(\bx,\by)\}.\\
\end{align*}
Note that a blue vertex can also receive a cyan label, or a teal label.
However, a given vertex can only receive one of the labels orange, purple, cyan and teal. 
We note also, that in $H_\uQ^+(\bx,\by):=(H_0(\bx,\by), H_{1,+}(\bx))$
all vertices $(j,i)\in H_0(\bx,\by)$ of valency 1, where at least one of $j$
and $i$ is not a boundary vertex, have now assigned at least one color label.
Only the isolated vertices of $H_\uQ(\bx,\by)$  can have assigned 2
(possibly  the same) color labels from the colors blue and red.
We agree that the vertices, which have received no color label in the above sense, are displayed as black vertices.  

A connected component of the quiver  $H_\uQ^+(\bx,\by)$,
which contains no blue, cyan or teal  vertex, is a \emph{dual real h-line}. 
A connected component of the quiver  $H_\uQ^+(\bx,\by)$, such
that its connected component in
$(H_0(\bx,\by), H_{1,+}(\bx,\by)\cup H_{1,o}(\bx,\by))$ contains no blue or cyan
vertex, is a \emph{dual h-line}.
A connected component of the quiver $H(\bx,\by)$, which contains no blue
vertex, is a \emph{dual long h-line}.

A dual real h-line $L\in\cH_\uQ(\bx,\by)$, 
is a \emph{dual kiss} if none of its (at most 2) endpoints is a boundary point
of $H_\uQ(\bx,\by)$ (see end of Section~\ref{ssec:Hvw}).

\begin{Rem} \label{rem:dual}
(1) It is easy to see that $(j,i)\in H_0(\bx,\by)$ lies on a dual (real, long)
h-line, resp.~on a dual kiss if and only if $(i,j)\in H_0(\by,\bx)$ lies on a
(real, long) h-line resp. a kiss.

(2) Each loop of $H_Q(\bx,\by)$ belongs to a real h-line, or to a dual real
h-line.  Indeed, if the connected component of a loop is not a generalized
diagonal, then its unique endpoint must have at least one color label.
In particular, a real h-line of type $\sfD'$ is a kiss except if it is a diagonal,
or an anti-diagonal.  

(3)   From the definitions it is  clear,
that the kisses are precisely the components, where each endpoint has one
of the labels blue, cyan or teal, and where moreover vertices with blue label
are not boundary vertices.
By replacing blue by red, cyan by orange, and teal by purple, we obtain a
corresponding characterization of the dual kisses.  
\end{Rem}

\subsection{Example} \label{ssec:exHxy}
Consider the polarized quiver
\[
\xymatrix{ 1 \ar@(lu,ru)[]^{\alp}\ar@{.>}@(ld,rd)[]^{\eps}}
\quad\text{ with } \Qsp=\{\eps\} \text{ and } \us(\alp)=\ut(\alp)=(1,+1). \]
This is ``almost'' a skewed-gentle polarized quiver, except for the boundedness.
It makes perfect sense to consider for this quiver admissible strings and bands,
and we can present, already for relatively short words, an interesting quiver
$H_\uQ(\bx,\by)$. Consider
$\bx=\II_{1,1}^{-1}\eps\alp\eps\alp^{-1}\II_{1,-1}$, which is admissible of
type~(u,p), and
$\by=\II_{1,-1}^{-1}\alp\eps\alp\eps\alp^{-1}\II_{1,-1}$, which is admissible
of type (p,p). We display the quivers $H(\bx), H(\by)$ and the decorated quiver
$H_\uQ(\bx,\by)$ with all the colored labels.  Observe, that this is essentially
the same situation as the one, which was considered in~\cite[Sec.~3.4]{Ge99}.
Concretely, in the diagram below,
the two arrows of the form $(a, b)_{\ominus}$ are dashed and depicted vertically, the four of the form $(a, b)_\oplus$ are dashed and depicted horizontally, and these six arrows go from teal-decorated vertices to purple-decorated vertices. 
The four arrows of the form $(a, b)_\times$ are dashed and are depicted diagonally, and
by definition go from cyan-decorated vertices to orange-decorated vertices.
The remaining arrows, none of which are dashed, are of the form $(a, b)_+$ and traverse vertices $(j, i)$ that are either black, red or blue determined by comparing the arrows, of a given signature, incident at $i$ in $H(\bx)$ with those incident at $j$ in $H(\by)$.

\[\def\objectstyle{\scriptstyle}\xymatrix{
                   &           &1  &\ar[l]_{\eps}           2&\ar[l]_{\alp}3 &%
\ar[l]_{\eps}\ar[r]^{\alp}         4&\ar@{.>}@(ur,dr)[]5\\
\ar@{.>}@(ul,ur)[] 1&           &\pbPT &\ar@{.>}[l]        \tPT&        \pbPT &%
\ar@{.>}[l]                \tbPT&\PT\ar@(u,r)[]\\
\ar[u]^{\alp}       2&           &\rPT&\ar@{.>}[ld]\crPT&\ar[ul]  \PT&%
\ar@{.>}[ld]\ar[ur]\cPT&\trPT\ar@{.>}[d]\\
\ar[u]^{\eps}       3&           &\obPT  &\ar[ul]             \PT&      \obPT &%
\ar[ul]                     \bPT& \pPT\\
\ar[u]^{\alp}       4&           &\rPT &\ar@{.>}[dl]\crPT&\ar[ul]  \PT&%
\ar@{.>}[dl]\ar[ur]\cPT& \trPT\ar@{.>}[d]\\
\ar[u]^{\eps}\ar[d]_{\alp}5&       &\orPT&\ar[ul]        \rPT&\ar[dl]\oPT&%
\ar[ul]\ar[dr]              \PT&\prPT\\
\ar@{.>}@(dr,dl)[] 6&         &\pbPT&\ar@{.>}[l]         \tPT&       \pbPT &%
\ar@{.>}[l]                 \tbPT&\PT\ar@(r,d)[]\\
\phantom{X}\\
}\]
The sets $L^{(1)}_0=\{(1,4)\}, L^{(2)}_0=\{(3,4), (2,3), (1,2)\}$ and $L^{(3)}_0=\{(6,4)\}$ 
are the underlying sets of 3 different real h-lines of type $\sfA$.  Note, that $L^{(2)}_0\cup\{(1,1)\}$
is the underlying set of a $\tH$-line, which is at the same time a long h-line. Thus, $\{(1,1)\}$ is
a h-line. On the other hand,  $L^{(4)}_0=\{(2,4), (1,5)\}$ is the underlying set of  real h-line of type $\sfD'$,
and $L^{(4)}_0\cup\{(3,3)\}$ is the underlying set of a long h-line which is \emph{not} a $\tH$-line.

We leave it as an exercise to identify the different dual real h-lines. 

\subsection{Triples and h-lines} \label{ssec:triph}
In~\cite{Kr91} Krause introduced admissible triples as a framework for
the combinatorial description of homomorphisms between representations
of gentle algebras. We adapt his construction here to the setting of
skewed-gentle polarized quivers, and relate it to our construction in
terms of the quiver $H_\uQ(\bx,\by)$.

A skewed-gentle polarized quiver $\uH$ is called \emph{hereditary} if the
following two conditions are fulfilled:
\begin{itemize}
\item
  The underlying graph of $\uH$ is one of the four types in
  Table~\ref{tab:AD-graphs} and all loops are special.  
\item
  If $\alp,\bet\in H_1$ are two different arrows with $s(\alp)=t(\bet)$,
  then $s_1(\alp)=-t_1(\bet)$.
\end{itemize}
In Section~\ref{ssec:skga} we will assign to each skewed-gentle polarized
quiver $\uQ$ a skewed-gentle $\Ka$-algebra $\Ka\uQ$. This algebra is hereditary
precisely, if $\uQ$ is hereditary in the above sense.

For a fixed, skewed-gentle polarized quiver $\uQ$, we are interested in the
category of homomorphisms $G\df\uH\ra\uQ$ where $\uH$ is hereditary.
By definition, a \emph{morphism} $\phi\df G\ra G'$ between homomorphisms
$G\df\uH\ra\uQ$ and $G'\df\uH'\ra\uQ$ is given by a strict homomorphism
$\phi\df\uH\ra\uH'$ of skewed-gentle  polarized quivers, such that
$G'\phi=G$.  Recall, that a homomorphism between polarized quivers is
\emph{strict}, if it sends ordinary arrows to ordinary arrows, see the
definitions in Section~\ref{ssec:polar}.

Note, that if in this situation $\uH$ is of type $\sftA$
or of type  $\sftD'$, then $\phi$ is surjective, and  $\uH'$ is of the
same type as $\uH$.

A \emph{winding} of the skewed-gentle quiver $\uQ$ is a homomorphism
$G\df\uH\ra\uQ$ of skewed-gentle polarized quivers, where $\uH$ is hereditary.
If $\uH$ is of type $\sftA$ or of type $\sftD'$, we request furthermore
that any morphism of windings, which starts in $G$, is an isomorphism.
In view of Remark~\ref{rem:polarized},  our definition of windings is
compatible  with the one from~\cite{Kr91}.

In particular, the morphisms $G_\bx\df \uH(\bx)\ra\uQ$ for $\bx\in\Adm(\uQ)$
from Section~\ref{ssec:Hw} are windings.  Observe that a winding may send an
ordinary arrow to a special loop, whilst the morphisms between windings
send ordinary arrows to ordinary arrows.

We consider the following properties (which may or may not hold) of a morphism  $\phi\df G\ra G'$ of windings $G\df\uH\ra\uQ$ and
$G'\df\uH'\ra\uQ$.  
\begin{itemize}
\item[(q)] If $j\in H_0$ is a boundary vertex 
  and $\alp'\in (H'_1)^\ord$ with $t(\alp')=\phi(j)$, then there exists an
  arrow $\alp$ in $\uH$ with $\phi(\alp)=\alp'$.
\item[(s)] If $j\in H_0$ is a boundary vertex 
  and $\bet'\in (H'_1)^\ord$ with $s(\bet')=\phi(j)$, then there exists an
  arrow $\bet$ in $\uH$ with $\phi(\bet)=\bet'$.
\item[(K)]
  If $j\in H_0$ is a boundary vertex of $\uH$, then $\phi(j)\in H'_0$
  is \emph{not} a boundary vertex of $\uH'$.
\end{itemize}

\begin{Rem} \label{rem:q+s}
Let $\phi\df\uH\ra\uH'$ be a strict morphism between hereditary, skewed-gentle polarized quivers, and let $h\in H_0$ be a vertex, \emph{which is not a boundary}.  
Then it is an easy exercise to see, that $\phi$ induces  bijections
\begin{align*}
\{\alp\in H_1\mid s(\alp)=h\} &\ra\{\alp'\in H'_1\mid s'(\alp')=\phi(h)\}\text{ and}\\
\{\bet\in H_1\mid t(\bet)=h\} &\ra\{\bet'\in H'_1\mid t'(\bet')=\phi(h)\}.
\end{align*}
Thus, the above conditions (q) and (s) are equivalent to the following, apparently stronger conditions
\begin{itemize}
\item[(q')] If $j\in H_0$ is \emph{any} vertex 
  and $\alp'\in (H'_1)^\ord$ with $t(\alp')=\phi(j)$, then there exists an
  arrow $\alp$ in $\uH$ with $\phi(\alp)=\alp'$.
\item[(s')] If $j\in H_0$ is \emph{any} vertex 
  and $\bet'\in (H'_1)^\ord$ with $s(\bet')=\phi(j)$, then there exists an
  arrow $\bet$ in $\uH$ with $\phi(\bet)=\bet'$.
\end{itemize}
It follows, that the composition of two morphisms with property (q) has again property
(q), and the same holds if we replace (q) by (s).  
\end{Rem}

\begin{Def}\label{def:triples}
For $\bx,\by\in\Adm(\uQ)$ a \emph{H-triple} $(G,\phi_q,\phi_s)$ for $(\bx,\by)$
consists of a winding $G\df\uH(G)\ra\uQ$, together with a  morphism
$\phi_q\df G\ra G_\bx$ with property (q), and a  morphism 
$\phi_s\df G\ra G_\by$ with property (s). 
Moreover, we request for $j\in H_0(G)$ with $\phi_q(j)\in H_0^\spe(\bx)$
\emph{and} $\phi_s(j)\in H_0^\spe(\by)$, that $j\in H_0^\spe(G)$.  
Recall, that by definition $\phi_q$ and $\phi_s$ are strict homomorphisms of polarized
quivers.  
An H-triple $(G,\phi_q,\phi_s)$ is a \emph{K-triple}, if $\phi_q$
and $\phi_s$ have moreover property (K).

Two H-triples $(G,\phi_q,\phi_s)$ and $(G',\phi'_q,\phi'_s)$ are
\emph{equivalent}, if there is an isomorphism of windings $\psi\df G\ra G'$
such that $\phi_q=\phi'_q\psi$ and $\phi_s=\phi'_s\psi$.  In other words,
in the following diagram all triangles must be commutative. 
\[
  \xymatrix{&\uH(G)\ar_{\phi_q}[ld]\ar^G[d]\ar^{\phi_s}[rd]\ar@/_1.2pc/_(.62){\psi}[dd]\\
    \uH(\bx)\ar^{G_\bx}[r] &\uQ &\ar_{G_\by}[l] \uH(\by)\\
   &\uH(G')\ar^{\phi'_q}[lu]\ar_{G'}[u]\ar_{\phi'_s}[ru] }
\]
We denote the set of equivalence classes $[(G,\phi_q,\phi_s)]$ of H-triples
for $(\bx,\by)$ by $\cT_\uQ(\bx,\by)$. The subset of classes of K-triples is
denoted by $\cK_\uQ(\bx,\by)$.

The \emph{type} of  $(G,\phi_q,\phi_s)\in\cT(\bx,\by)$ 
is the \emph{type} of its underlying graph
of $\uH(G)$. Thus, the type of $(G,\phi_q,\phi_s)$ belongs
to the list from Table~\ref{tab:AD-graphs}.
\end{Def}

On the other hand, we define
$L(G,\phi_q,\phi_s)$ to be the following subquiver of
$H_\uQ^+(\bx,\by)$: It consists of the vertices
$(\phi_s(h),\phi_q(h))$ for $h\in H_0(G)$ and has
an arrow
\[
  (\phi_s(\alp),\phi_q(\alp))\df (\phi_s(s(\alp)),\phi_q(s(\alp)))\ra
  (\phi_s(t(\alp)),\phi_q(t(\alp)))
\]  
for each arrow $\alp$ of $\uH$. In particular, as a quiver, 
$L(G,\phi_q,\phi_s)$ is isomorphic to $\uH$.

Note that the connected subquiver $L:=L(G,\phi_q,\phi_s)\cong H(G)$ of 
$H_\uQ(\bx,\by)$ contains no arrows from $H_{1,o}(\bx,\by)$ since the morphisms 
$\phi_q$ and $\phi_s$ are strict. 
Next, $L$ contains no arrow from 
$H_{1,\times}(\bx,\by)$ in view of the “crossed" definition of those arrows, 
and we can view $\phi_s$ as a (projection) morphism $L\ra\uH(\by)$. 
Thus, $L$ is indeed a connected subquiver of $H^+_\uQ(\bx,\by)$. 

\begin{Prop} \label{prp:trip-line}
  The map $L$ induces a bijection
\[
  \cT_\uQ(\bx,\by)\ra\cR_\uQ(\bx,\by),\quad
  [(G,\phi_q,\phi_s)]\mapsto L(G,\phi_q,\phi_s),
\]  
from the equivalence classes of H-triples for $(\bx,\by)$ to the real 
h-lines of
$H_\uQ(\bx,\by)$. 
Under this correspondence, K-triples correspond
precisely to kisses.
\end{Prop}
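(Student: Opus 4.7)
The plan is to check in turn that the map $L$ is well-defined on equivalence classes, that its image lies in $\cR_\uQ(\bx,\by)$, that it is injective and surjective, and that K-triples correspond to kisses. Well-definedness is immediate from the definition of equivalent H-triples: an isomorphism of windings $\psi\df G\ra G'$ identifies the vertex and arrow sets defining $L$. Since $\phi_q$ and $\phi_s$ are strict morphisms of polarized quivers, each arrow $\alp$ of $\uH(G)$ maps to a pair $(\phi_q(\alp),\phi_s(\alp))$ of arrows of the same kind (both ordinary or both special), which is exactly the condition for that pair to lie in $H_{1,+}(\bx,\by)$.  Hence $L(G,\phi_q,\phi_s)$ is a connected subquiver of $H_\uQ^+(\bx,\by)$ isomorphic as a quiver to $\uH(G)$.

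Next I would show $L(G,\phi_q,\phi_s)$ is a real h-line by excluding red, orange and purple labels at its vertices. Orange and purple labels at a vertex $(j,i)$ sit on targets of $\times$- or $o$-arrows, which mix an ordinary and a special leg at $(j,i)$. At an interior vertex of $L$ the two incident arrows of $\uH(G)$ already exhaust both incidences, leaving no room for a mixed label. At an endpoint $(\phi_q(j),\phi_s(j))$ with $j$ boundary in $\uH(G)$, conditions (q) and (s)---combined with the requirement that $j$ is special whenever both $\phi_q(j)$ and $\phi_s(j)$ are special---force any ordinary arrow of $\uH(\bx)$ or $\uH(\by)$ outgoing/incoming at the image to lift to $\uH(G)$, which is impossible at a boundary, so no mixed label can occur. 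Absence of red labels follows from Lemma~\ref{lem:charhl}~(a): picking any vertex $h$ of $\uH(G)$, the existence of the winding $G$ with $\uH(G)$ hereditary translates directly into $\widehat{\by}^\del(\phi_s(h),\rho)\preceq\widehat{\bx}^\del(\phi_q(h),\rho)$ for all $\del\in\{+,-\}$ and $\rho\in\{-1,+1\}$.

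For injectivity, if $L(G,\phi_q,\phi_s)=L(G',\phi'_q,\phi'_s)$ then the identifications of each side with $\uH(G)$ and $\uH(G')$ produce a strict isomorphism $\psi\df\uH(G)\ra\uH(G')$ intertwining the projections and the windings, i.e.~an equivalence of triples. For surjectivity, given $L\in\cR_\uQ(\bx,\by)$ I define $\uH(G):=L$ with loops declared special, polarizations pulled back from either projection (consistent because arrows of $L$ are $+$-arrows), and $\phi_q,\phi_s$ the projections, $G:=G_\bx\phi_q=G_\by\phi_s$. The graph of $\uH(G)$ is one of the four types in Table~\ref{tab:AD-graphs}, which makes it hereditary; absence of red labels makes $G$ well-defined as a winding via Lemma~\ref{lem:charhl}~(a), while absence of orange and purple labels at the endpoints supplies (q) and (s), and the simultaneous-special constraint on endpoints follows analogously.

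Finally, a kiss is by definition a real h-line $L$ whose at most two endpoints $(\phi_q(j),\phi_s(j))$ are not boundary vertices of $H_\uQ(\bx,\by)$---equivalently, neither $\phi_q(j)$ is boundary in $\uH(\bx)$ nor $\phi_s(j)$ is boundary in $\uH(\by)$---which is precisely property (K) for both $\phi_q$ and $\phi_s$; hence the bijection $L$ restricts as claimed. I expect the main obstacle to be the detailed case analysis at boundary vertices of $\uH(G)$: checking that conditions (q), (s) together with the simultaneous-special constraint exactly match the absence of orange and purple labels across all variants of the types $\sfA$ and $\sfD'$ (with or without a terminal loop, and with or without a trivial letter at the endpoint). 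Once this routine but tedious bookkeeping is done, the argument is an adaptation of Krause's framework~\cite{Kr91} in the gentle case.
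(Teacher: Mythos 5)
Your proposal follows essentially the same route as the paper's own proof: well-definedness of $L$ together with the exclusion of red, orange and purple labels via the valency bound and properties (q), (s) and the simultaneous-special condition, the inverse map given by the two projections of a real h-line with the pulled-back polarization, and the identification of kisses with property (K) at the images of the boundary vertices of $\uH(G)$. The paper is just as terse as you are about the boundary-vertex bookkeeping you defer, so apart from the slightly misleading attribution of the inequality of Lemma~\ref{lem:charhl}(a) to the mere existence of the hereditary winding (it is exactly the conditions (q) and (s) of the triple that yield it), your argument matches the published one.
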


\begin{proof}
  Clearly, $L(G,\phi_q,\phi_s)=L(G',\phi'_q,\phi'_s)$ 
  if $(G',\phi'_q,\phi'_s)$ is equivalent to $(G,\phi_q,\phi_s)$. 
  Moreover, for
  $(G,\phi_q,\phi_s)\in\cT_\uQ(\bx,\by)$ the connected subquiver 
  $L:=L(G,\phi_q,\phi_s)\subset H_\uQ^+(\bx,\by)$ is a connected component. 
  Otherwise, for a boundary vertex $b$ of $H(G)$, there exists an arrow 
  $(\bet',\alp')_+\in H_{1,+}\setminus L_1$ which starts or stops in
  $(\phi_s(b), \phi_q(b))\in L_0$.  In the first case, this means that for the arrow
  $\bet'\in H_1(\by)$ we have $t(\bet')=\phi_s(b)$, but $\bet'$ is not in the image
  of $\phi_s$. This contradicts the property (s) of $\phi_s$.  
  In case $(\bet',\alp')$ terminates in $(\phi_s(b), \phi_q(b))$, we find similarly a contradiction to the property $(q)$ of $\phi_q$.

Next, in this case $L$ contains no red, orange or purple vertex, 
due to the properties (q) resp.~(s) of $\phi_q$ resp.~$\phi_s$.
In fact, suppose, for example,  that $L$ contains an orange vertex 
$l:=(\phi_s(b),\phi_q(b))$.  
By the above discussion, we see that $b$ is a boundary vertex of $\uH(G)$, and
$l$ is the terminal point $(s(\nu), t(\mu))$ of an arrow 
$(\nu,\mu)_\times\in H_{1,\times}(\bx,\by)$, which in particular does not belong to $L$.
This means, that the arrow $\nu\in H_1(\bx)$ is not
in the image of $\phi_q$, but $t(\nu)=\phi_q(b)$, contradicting the property (q)
of $\phi_q$. In fact, we obtain in this case also a contraction to the property
(s) of $\phi_s$.  Similarly, if $L$ contains a purple vertex, this vertex must be the endpoint of an arrow from $H_{1,o}(\bx,\by)$. In case this arrow is of the form
$(\eta,\mu)_\oplus$, we obtain as above a contradiction to the property (q) of $\phi_q$.
If the arrow is of the form $(\nu,\eta)_\ominus$, we obtain a contradiction to the
property (s) of $\phi_s$. We leave it as an exercise, that $L$ doesn't contain red vertices either.   Thus, our map $\cT_\uQ(\bx,\by)\ra\cR_\uQ(\bx,\by)$ is well-defined.

  It is easy to see that the following construction yields an inverse to $L$.
  If $L$ is a real h-line, we can consider it as a quiver, together with
  projections $\pi_s\df L\ra\uH(\by)$ and $\pi_q\df L\ra\uH(\bx)$.
  For example, $\pi_s(j,i)=j$ and $\pi_s(\nu,\mu)=\nu$.  By the construction
  of $H_\uQ(\bx,\by)$ we have then $G_\bx\pi_q=G_\by\pi_s=:G$, and
  $(G,\pi_q,\pi_s)$ becomes an H-triple with $G\df L\ra\uQ$ a winding, where
  we can equip $L$ with the canonical polarization compatible with $G$.
\end{proof}  

\begin{Expl}  \label{expl:TripS}
  Let $\bs_i, \bx\in\Adm(\uQ)$, where $\bs_i:=\II_{i,+}^{-1}\II_{i,-}$
  for some $i\in Q_0$. If $i\in\Qordv$, all (real, possibly dual)
  h-lines in $H_\uQ(\bs_i,\bx)$ are of type $\sfA_1$.
  In this situation, the equivalence classes H-triples for 
  $(\bx, \bs_i)$ are (via projection) in bijection with
  morphisms $G_{\bs_i}\ra G_\bx$, which have property (q). The reason is,
  that  $H(\bs_i)=H$ consists of a single vertex $1$, and thus the identity
  is the unique morphism $\phi_s$ with property (s).  
  For the same reason we don't have to deal with equivalence classes.
  The just mentioned morphisms with property (q)  are obviously given by the
  sources $j$ of $\uH(\bx)$ with
  $G_\bx(j)=i$. 

  For $i\in\Qspv$ the quiver $\uH(\bs_i)$ is of type $\sfD'_1$, i.e.~it has
  only one vertex $1$ with a single (special) loop $\eta$. In this case, 
  the real h-lines in $H_\uQ(\bx,\bs_i)$ are of type $\sfA_1$ or of type 
  $\sfD'_1$.
  All real h-lines of type $\sfA_1$ are contained in a long h-line of type
  $\sfA_2$.  
  Similar to the previous case, the real h-lines of type
  $\sfA_1$ are in bijection with morphisms 
  $\phi_q\df G\ra G_\bx$ with property $(q)$, 
  where $G\df \{1\}\ra\uQ$ is given by $1\mapsto i$. In this situation,
  the map $\phi_s\df \{1\}\ra\uH(\bs_i)$ is trivially given by the inclusion.
  The just mentioned morphisms $\phi_q$ are also in bijection with the sources
  $j$ of $\uH(\bx)$ with $G_\bx(j)=i$.

  On the other hand, the h-lines of type $\sfD'_1$ are in bijection with
  the morphisms of windings 
  $\phi_q\df\uH(\bs_i)\ra\uH(\bx)$ which have property (q).
  Those are also in bijection with the \emph{special} vertices $j$ of $\uH(\bx)$
  where no ordinary arrow starts, and moreover $G_\bx(j)=i$.  Thus, there are
  at most two h-lines of type $\sfD'$.
\end{Expl}

\subsection{Fringing} \label{ssec:fring}
Let $\uQ$ be a skewed-gentle polarized quiver.
We extend and slightly generalize the notion of fringing
from~\cite[Sec.~3]{BY20} to our setting of skewed-gentle polarized quivers.

\begin{Def}
Let $\uQ\subset\uQ^f$ be an inclusion  of  skewed-gentle polarized
quivers. We say that $\uQ^f$ is
a \emph{fringing} of $\uQ$ if the following two conditions hold:
\begin{itemize}
\item[(1)]
For each interior vertex  $i\in\uQ_0\subset\uQ^f_0$ we have
\[
  \abs{\{\alp\in\uQ^f_1\mid s(\alp)=i\}}+\abs{\{\bet\in\uQ^f_1\mid t(\bet)=i\}}= 4.\]  
\item[(2)]
 For each \emph{fringe arrow} $\alp\in\uQ^f_1\setminus\uQ_1$ we have:
 $s(\alp)\in \uQ_0$ implies $\ut(\alp)\in (\uQ_0^f\setminus \uQ_0)\times\{+1\}$,
 and $s(\alp)\in\uQ^f_0\setminus\uQ_0$ implies $s_1(\alp)=+1$ and
  $t(\alp)\in Q_0$.  
\end{itemize}  
\end{Def}
In particular, each fringe arrow connects a \emph{fringe vertex}
$j\in Q^f_0\setminus Q_0$  with an interior vertex. 
Moreover, each fringe vertex has valency at most 2, since at a fringe vertex
the polarization is, by definition, 
always $+1$.  It is clear that each skewed-gentle polarized quiver admits a
fringing, however our fringings are  not unique. For example,
in the following display, two non-isomorphic fringings of the same
skewed-gentle polarized quiver $\uQ$ with $Q_0=\{1,2,3\}$ and $\Qsp=\{\eps\}$,
from Section~\ref{ssec:ExSt}, are exhibited.
\[
\xymatrix{&\ar[ld]^(.45)\bet_(.8)-_(.25)+ 2\ar@(ul,ur)[]_{\eps}^(.2)-^(.8)- \\
    3\ar[rr]^\gam_(.3)+_(.7)+ \ar@{.>}[rd]_(.3)-_(.65)+
    && 1\ar[lu]^\alp_(.2)+_(.75)+\ar@{.>}@/^1.5pc/[ldd]^(.25)-^(.8)+\\
    & \grex{5}\ar@{.>}[ru]_(.3)+_(.65)-\\
    & \grex{6}\ar@{.>}@/^1.5pc/[luu]^(.75)+^(.2)+
  }\qquad
  \xymatrix{&\ar[ld]^(.45)\bet_(.8)-_(.25)+ 2\ar@(ul,ur)[]_{\eps}^(.2)-^(.8)-
    &&\ar@{.>}[ld]^(.25)+^(.65)- \grex{7}\\
    3\ar[rr]^\gam_(.3)+_(.7)+ \ar@{.>}[rd]_(.3)-_(.65)+
    && 1\ar[lu]^\alp_(.2)+_(.75)+\ar@{.>}@/^1.5pc/[ldd]^(.25)-^(.8)+\\
    & \grex{5}\\
    & \grex{6}\ar@{.>}@/^1.5pc/[luu]^(.75)+^(.2)+
  }
\]    
Note, that the operator $\tau_{\uQ^f}$ from Section~\ref{ssec:AR-adm} acts
on $\bx\in\Adm_s(\uQ)$ by replacing each unpunctured (trivial) letter by
the corresponding cohook, which ends then in a fringe vertex.
As observed in~\cite{BY20}, this regular behavior has several advantages.

\subsection{Kisses, K-triples and fringing} \label{ssec:kissH}
Let $\uQ\subset\uQ^f$ be a skewed-gentle polarized quiver with fringing.
Recall from Section~\ref{ssec:AR-adm} that we have an operator
$\tau=\tau_{\uQ}$ on $\Adm(\uQ)$.  Clearly $\Adm(\uQ)\subset\Adm(\uQ^f)$,
and we abbreviate $\tau_f=\tau_{\uQ^f}$ for the corresponding operator
on $\Adm(\uQ^f)$. For $\bx\in\Adm(\uQ)$ we will consider frequently
$\iota G_\bx\df\uH(\bx)\ra\uQ^f$ as a winding over $\uQ^f$,
where $\iota$ is  inclusion of $\uQ$ into $\uQ^f$.
See also Section~\ref{ssec:Hw}.

\begin{Rem} \label{rem:qs}
Let $\bx, \by\in\Adm(\uQ)$ with $\by$ not projective.
With the notation from Section~\ref{ssec:triph} we have 
injective morphisms of windings
  \[
    \pi_\bx\df G_{\bx}\ra G_{\tau_f\bx}\quad\text{and}\quad
    \sig_\by\df G_{\tau\by}\ra G_{\tau_f\by},
  \]
where $\pi_x$ has properties (q) and (K), and $\sig_\by$ has properties
(s) and (K). This follows easily from Remark~\ref{rem:Admtau}. 

In fact, the morphism  $\pi_\bx$, close to a boundary vertex $j$ of $\uH(\bx)$, may be visualized as follows:
\[\xymatrix{
\uH(\bx):&\ar@{.}[r]&\ar@{-}[rr]&\phantom{j}\ar@{^{(}->}[d]^{\pi_{\bx}}&j\\
\uH(\tau_f\bx):&\ar@{.}[r]&\ar@{-}[rr]&\phantom{A}&j\ar[r]^{\alp}&\cdot&\ar[l]_{\bet_1}&\ar@{.}[l]&\ar[l]_{\bet_f} f
}\]
In view of the only new arrow $\alp$ which starts in $j$, it is clear that the inclusion $\pi_\bx$
has properties (q) and (K). Note, that in case $\alp$ is a fringe arrow, the arrows $\bet_1,\ldots\bet_f$ do not occur. If $\alp$ is not an fringe arrow,  $\bet_f$ is a fringe arrow.

For the morphism $\sig_\by$ we have to distinguish at each boundary vertex $j$ two cases.\\  
(1) If $\uH(\tau\by)$ is obtained (locally) from $\uH(\by)$ by adding at $j$ a co-hook, we are in the following situation:
\[\xymatrix{
\uH(\tau\by):&\ar@{.}[r]&\ar@{-}[rr]&\phantom{j}\ar@{^{(}->}[d]^{\sig_{\by}}
&j\ar[r]^{\alp}&\cdot  &\ar[l]_{\bet_1}&\ar@{.}[l]&\ar[l]_{\bet_{f-1}} k\\
\uH(\tau_f\by):&\ar@{.}[r]&\ar@{-}[rr]&\phantom{i}&j\ar[r]^{\alp}&\cdot&\ar[l]_{\bet_1}&\ar@{.}[l]&\ar[l]_{\bet_{f-1}} k&\ar[l]_{\bet_f}f
}\]
In view of the unique new (fringe) arrow $\bet_f$ which terminates at $k$, the inclusion $\sig_\by$ has in this case property (s).\\
(2) If $\uH(\tau\by)$ is obtained (locally) from $\uH(\by)$ by removing at $j$ a hook, we are in the following situation:
\[\xymatrix{
\uH(\by):    &\ar@{.}[r]&\ar@{-}[r]&i&\ar[l]_{\bet'}\cdot\ar[r]^{\alp'_1}&\ar@{.}[r]&\ar[r]^{\alp'_h}&j\\
\uH(\tau\by):&\ar@{.}[r]&\ar@{^{(}->}[d]^{\sig_{\by}}\phantom{j}\!\!\ar@{-}[r]&i\\
\uH(\tau_f\by): &\ar@{.}[r]&\ar@{-}[r]\phantom{i}\!\!&i&\ar[l]_{\bet'}\cdot\ar[r]^{\alp'_1}&\ar@{.}[r]&\ar[r]^{\alp'_h}&j
\ar[r]^{\alp}&f\\
}\]
In view of the unique new  arrow $\bet'$ which terminates at $i$, the inclusion $\sig_\by$ has also in this case property (s). Note, that in this case $\alp$ is a fringe arrow. 
\end{Rem}

The following result is our adaption of~\cite[Lem.~4.2]{BY20}
to the situation of skewed-gentle polarized quivers.
We include the proof for the convenience of the reader,
though the idea is very similar to the original version.

\begin{Lem} \label{lem:cce}
  With the notation, which was  introduced above, we have the following:
\begin{itemize}
  \item[(a)] For each $\bx\in\Adm(\uQ)$,  composition with $\pi_\bx$ yields
    a bijection between the equivalence classes of morphisms of windings
    $\phi\df G\ra G_\bx$ with property (q) and morphisms of windings
    $\phi'\df G'\ra G_\bx$ with property (q) and (K). 
  \item[(b)] For each $\bx\in\Adm(\uQ)$ which is not projective,
    composition with $\sig_\bx$ yields
    a bijection between the equivalence classes of morphisms of windings
    $\phi\df G\ra G_{\tau\bx}$ with property (s) and morphisms of windings
    $\phi'\df G\ra G_{\tau_f\bx}$ with property (s) and (K). 
\end{itemize}
\end{Lem}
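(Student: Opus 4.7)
The plan is to adapt the proof of \cite[Lem.~4.2]{BY20} to the skewed-gentle setting, exploiting the concrete structure of $\pi_\bx$ and $\sig_\by$ recorded in Remark~\ref{rem:qs}. I would first analyze how $H(\tau_f\bx)$ arises from $H(\bx)$ under the inclusion $\pi_\bx$. The passage $\bx\mapsto\tau_f\bx$ attaches a co-hook at each unpunctured endpoint of $\bx$, and by the fringing axiom each such co-hook is a finite chain that terminates at a fringe vertex and whose first edge points \emph{into} $H(\bx)$. Consequently every vertex of $H(\bx)$ becomes an interior vertex of $H(\tau_f\bx)$, while the boundary vertices of $H(\tau_f\bx)$ are exactly the terminal fringe tips of the attached chains. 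The analogous description applies to $H(\tau_f\by)$ relative to $H(\tau\by)$ via $\sig_\by$.

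For the forward direction of (a), starting from $\phi\df G\ra G_\bx$ with (q), I would show that $\pi_\bx\circ\phi$ has (q) and (K). Property (K) is immediate from the first paragraph, since boundary vertices of $H(G)$ are sent by $\phi$ into $H(\bx)$, which is contained in the interior of $H(\tau_f\bx)$. For (q), any ordinary arrow $\alp'$ of $H(\tau_f\bx)$ starting at $\phi(h)$ either lies already in $H(\bx)$, in which case a lift exists by the hypothesis on $\phi$, or lies in one of the new co-hook chains; this latter case is vacuous because every new chain arrow ends at the attachment vertex rather than starting there. The forward direction of (b) is handled verbatim with $\sig_\by$, (s), and $\tau\by$ taking the corresponding roles.

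The bulk of the argument lies in the backward direction. Given $\phi'\df G'\ra G_{\tau_f\bx}$ with (q) and (K), the task is to show that its image is contained in $H(\bx)\subset H(\tau_f\bx)$, from which the required factorization $\phi'=\pi_\bx\circ\tilde\phi$ follows, with $\tilde\phi\df G'\ra G_\bx$ automatically inheriting (q). I would argue by contradiction: if some vertex of $H(G')$ has image inside one of the co-hook chains $T$, pick such a vertex $h$ with $\phi'(h)$ as deep as possible in $T$ (closest to the fringe tip $f$). The local injectivity of the winding $\phi'$ on arrows, combined with property (q) applied at the boundary vertices of $H(G')$, should then force the trajectory of $H(G')$ to propagate further out along $T$ and eventually to reach a vertex mapping to $f$, contradicting (K). Since $H(G')$ has one of the shapes listed in Table~\ref{tab:AD-graphs}, the finite-propagation argument goes through by tracking the at most two incident arrows at each interior vertex.

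Finally, I would check that the forward and backward assignments are mutually inverse on equivalence classes of H-triples, using that $\pi_\bx$ (resp.~$\sig_\by$) is a monomorphism of windings so that factorizations through it are unique when they exist. The hard part will be making the depth-propagation step in the backward direction fully precise: the combinatorics of the co-hook tail must be combined carefully with the covering-type local behavior of windings to guarantee that the trajectory of $H(G')$ really does reach the fringe vertex, and one has to separately handle the possibility that $H(G')$ is of the ``closed'' type $\sftA$ or $\sftD$, where the propagation must loop around rather than terminate at a boundary vertex of $H(G')$.
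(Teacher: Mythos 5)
Your overall strategy is the same as the paper's: show that composition with $\pi_\bx$ (resp.\ $\sig_\bx$) preserves the relevant property and is injective on classes, then prove surjectivity by showing that a winding morphism into $G_{\tau_f\bx}$ whose image leaves $H(\bx)$ must run along the attached co-hook all the way to the fringe tip, where (K) fails. However, the step you yourself flag as the hard part is exactly where your sketch does not work as written, for two concrete reasons. First, your description of the co-hook is backwards: by the construction of $\uH(\tau_f\bx)$ in Section~\ref{ssec:Hw} (direct letters give arrows oriented towards the smaller index), the appended tail $\bet_1^{-1}\bet_2\cdots\bet_n$ yields a first arrow that \emph{starts} at the attachment vertex, while the remaining arrows point back towards the first new vertex and the fringe tip is a source; compare the displayed $\uH(\tau_f\bx)$ in the example of Section~\ref{ssec:gvec}. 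Second, and more seriously, with your reading of (q) as a condition on arrows \emph{starting} at $\phi(j)$, a morphism whose image stops at an intermediate co-hook vertex $c$ does satisfy (q) there --- the only ordinary arrow starting at $c$ (if any) points back into the image and lifts --- and it satisfies (K) there too, since such a $c$ is not a boundary vertex of $H(\tau_f\bx)$; this holds in your flipped picture as well. So no ``depth propagation'' is forced, and surjectivity fails under your conventions. What actually makes the argument work (and what Remark~\ref{rem:qs} and Example~\ref{expl:TripS} force as the intended, factor-string reading of (q)) is that every ordinary arrow of the target \emph{ending} at the image of a boundary vertex must lift: since all co-hook arrows except the first point back towards $H(\bx)$, this version of (q) fails at every intermediate stopping point in a single step, so a non-factoring image must contain the whole tail up to the fringe vertex, and then (K) fails. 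No induction along the tail is needed.

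A second gap: part (b) is not ``verbatim'' part (a). The morphism $\sig_\bx$ compares $\tau\bx$ with $\tau_f\bx$, and when $\tau\bx$ is obtained from $\bx$ by \emph{removing a hook} (the case $n=1$ in the paper's proof), the complement of $H(\tau\bx)$ in $H(\tau_f\bx)$ is a removed hook $\gam_1\gam_2^{-1}\cdots\gam_l^{-1}$ followed by a single fringe arrow, whose orientation pattern differs from that of a co-hook tail; this subcase needs the separate short argument the paper gives and is not covered by your uniform treatment. Your forward direction and the bookkeeping of equivalence classes via injectivity of $\pi_\bx$, $\sig_\bx$ are fine and agree with the paper; the closed types $\sftA$, $\sftD'$ that worry you are dispatched in the paper simply by noting that for them $\tau_f\bx=\bx=\tau\bx$, so the statement is trivial.
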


\begin{proof}
  The composition of morphisms of windings with property (q)
  resp.~with property (s) has again this property, see Remark~\ref{rem:q+s}.  Thus, the maps described in (a) and (b) are injective by Remark~\ref{rem:qs}. Thus,
  it remains to prove surjectivity in both situations. We may assume
  that $\bx\in\Adm_s(\uQ)$, because otherwise we have $\tau_f\bx=\bx=\tau\bx$,
  and our claim becomes trivial. For the same reason we may even assume that
  $\bx=\II^{-1}_{i,\mu}\bx'\II_{j,\nu}$ with $\II_{j,\nu}$ unpunctured. Thus
\[\tau_f\bx=\begin{cases}
    \II_{i',\mu'}^{-1}\alp_m^{-1}\cdots\alp_2^{-1}\alp_1\bx'\bet_1^{-1}\bet_2\cdots\bet_n\II_{j',\nu'} &\text{ if } \II_{i,\mu} \text{ is unpunctured,}\\
    \II_{i,\nu}^{-1}\bx'\bet_1^{-1}\bet_2\cdots\bet_n\II_{j',\sig'} &\text{ if }
    \II_{i,\nu} \text{ is punctured,}
  \end{cases}
\]  
for certain arrows $\alp_1,\ldots,\alp_m,\bet_1,\ldots,\bet_n$ with
$m, n\geq 1$,  $\alp_1,\bet_1$ ordinary arrows, and
$\alp_m,\bet_n\in\uQ_1^f\setminus Q_1$.

(a) Suppose for example in the second case that we have a morphism of windings
$\psi\df G\ra G_{\tau_f\bx}$ with property (q), which does not factor
of $\pi_\bx$.  Then $G\df\uH\ra\huQ$ must be given by
$H=H(\II^{-1}_{i'',\mu''}\bx''\bet_1^{-1}\bet_2\cdots\bet_n\II_{j',\nu'})$, where $\bx'=\by\bx''$.  
However, in this case $\psi$ does not have property (K).
A contradiction.  The first case is similar.

(b) Assume that $\II_{i,\nu}$ is punctured, and thus
$\tau_f\bx=\II_{i,\nu}^{-1}\bx'\bet_1^{-1}\bet_2\cdots\bet_{n-1}\bet_n\II_{j',\sig'}$.
The other possibility is very similar.
If $n\geq 2$, we have
$\tau\bx=\II_{i,\nu}^{-1}\bx'\bet_1^{-1}\bet_2\cdots\bet_{n-1}\II_{j'',\sig''}$.
If in this case we have $\psi\df G\ra G_{\tau_f\bx}$ which does not factor over
$\sig_\bx$, then clearly $\psi$ lacks property (k).

If $n=1$, the string $\tau\bx\in\Adm(\uQ)$ is obtained from $\bx$ by removing
a hook.  Thus, we have a factorization
$\bx'=(\tau\bx)'\gam_1\gam_2^{-1}\cdots\gam_l^{-1}$ for certain arrows
$\gam_1,\ldots,\gam_l$ with $l\geq 1$. Thus, if $\psi\df G\ra G_{\tau_f\bx}$
has properties (s) and (k), the winding $G\df H\ra\huQ$ with $H=H(\by)$ must
correspond actually to a substring $\by'$ of $(\tau\bx)'$.
\end{proof}  

The next result  is a generalization of~\cite[Thm.~4.3]{BY20}
to  our setting of skewed-gentle polarized quivers, and to our
relaxed definition of fringing. Note, that we allow in our correspondence
also bands.  This makes it necessary to use a setup which is inspired by
Krause's admissible triples from~\cite{Kr91}, and which was prepared
in the previous Lemma.

\begin{Prop} \label{prp:kisses}
  Let $\uQ\subset\uQ^f$ be a skewed-gentle polarized quiver
  with a fringing, and $\bx,\by\in\Adm(\uQ)$.
 
\begin{itemize}  
\item[(a)] If $\by$ is not projective, the map
\[
    \cT_\uQ(\bx,\tau\by)\ra\cK_{\uQ^f}(\tau_f\bx,\tau_f\by),\quad
     (G,\phi_q,\phi_s)\mapsto (\iota G, \pi_\bx\phi_q, \sig_\by\phi_s)
\]
is a bijection from the h-triples for $(\bx,\tau\by)$ to the k-triples for
$(\tau_f\bx,\tau_f\by)$, which respects the types.
If $\by$ is projective, $\cK_{\uQ^f}(\tau_f\bx, \tau_f\by)=\emptyset$. 
\item[(b)]
  There is a natural bijection
  $\cR_\uQ(\bx,\tau\by)\ra\cK_{\uQ^f}(\tau_f\bx,\tau_f\by)$, between the set of
  real h-lines from $\bx$ to $\tau\by$ and the set of kisses from $\tau_f\bx$
  to $\tau_f\by$.
\end{itemize}  
\end{Prop}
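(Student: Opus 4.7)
The proof of Proposition~\ref{prp:kisses} reduces to a combination of Lemma~\ref{lem:cce} and Proposition~\ref{prp:trip-line}. For part~(a) with $\by$ not projective, I first check well-definedness: given $(G,\phi_q,\phi_s)\in\cT_\uQ(\bx,\tau\by)$, the properties (q) and (s) are preserved under post-composition, while property (K) is inherited from $\pi_\bx$ and $\sig_\by$ as recorded in Remark~\ref{rem:qs}. The compatibility condition of Definition~\ref{def:triples} on special vertices transfers along the injective strict morphisms $\pi_\bx$ and $\sig_\by$, and the type of the triple is unchanged since $\iota G$ and $G$ share the underlying hereditary quiver $\uH(G)$.

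For bijectivity, I would apply Lemma~\ref{lem:cce}(a) and (b) componentwise. The key step---and in my view the main obstacle---is the \emph{simultaneous} inversion: starting from a K-triple $(G',\phi'_q,\phi'_s)\in\cK_{\uQ^f}(\tau_f\bx,\tau_f\by)$, the lemma provides unique factorizations (up to equivalence) $\phi'_q=\pi_\bx\phi_q$ and $\phi'_s=\sig_\by\phi_s$, but a priori these might live on different sub-windings of $\uH(G')$. I would argue that property~(K) of $\phi'_q$ and $\phi'_s$ forces both reconstructions to coincide: each (K)-morphism must avoid mapping boundary vertices of $G'$ onto the fringe boundary vertices of $\uH(\tau_f\bx)$ and $\uH(\tau_f\by)$ respectively, so the ``fringe portions'' of $G'$ detected on the $\bx$-side (via $\pi_\bx$) and on the $\by$-side (via $\sig_\by$) coincide, and the common winding $G$ is obtained from $G'$ by deleting exactly these fringe portions. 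The compatibility condition for an H-triple then descends from that of the K-triple.

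For the projective case, if $\by\in\cP(\uQ)$ both boundary segments of $\tau_f\by$ live entirely within the fringe: $\by$ is right-inextensible in $\uQ$, so in $\uQ^f$ it extends by fringe cohooks at both ends, and $\tau_f\by$ turns those cohooks into the new ends of the string (see the explicit formulas in the proof of Lemma~\ref{lem:cce}). A hypothetical K-triple $(G',\phi'_q,\phi'_s)\in\cK_{\uQ^f}(\tau_f\bx,\tau_f\by)$ would require $\phi'_s\df G'\ra G_{\tau_f\by}$ with properties (s) and (K); following the same analysis as in Lemma~\ref{lem:cce}(b), property~(s) at a fringe end of $\tau_f\by$ forces the corresponding end of $G'$ to sit at a boundary vertex mapping onto a fringe (hence boundary) vertex of $\uH(\tau_f\by)$, contradicting (K). Hence $\cK_{\uQ^f}(\tau_f\bx,\tau_f\by)=\emptyset$.

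Finally, part~(b) follows by composition: Proposition~\ref{prp:trip-line} provides mutually inverse bijections between H-triples and real h-lines on each of the two sides and, by its last assertion, sends K-triples to kisses. Composing with the bijection of part~(a) then yields the required bijection $\cR_\uQ(\bx,\tau\by)\ra\cK_{\uQ^f}(\tau_f\bx,\tau_f\by)$.
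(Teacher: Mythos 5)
Your proposal follows the paper's own route exactly: part (a) is deduced from Lemma~\ref{lem:cce} together with Remark~\ref{rem:qs} and the definitions, and part (b) by composing with the bijection of Proposition~\ref{prp:trip-line}, which is precisely the paper's (very brief) argument. The only remark worth making is that the ``simultaneous inversion'' you single out as the main obstacle is automatic: the bijections of Lemma~\ref{lem:cce} keep the source winding fixed and merely factor the morphisms through $\pi_\bx$ resp.\ $\sig_\by$ (a K-triple's morphisms already avoid the fringe cohook segments, which is exactly what the proof of that lemma shows), so no ``deletion of fringe portions'' of $\uH(G')$ is needed and the two factorizations trivially live on the same winding.
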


\begin{proof}
  (a) is a direct consequence of Lemma~\ref{lem:cce} and the definitions.

  (b) Follows from (a) and Proposition~\ref{prp:trip-line}.
\end{proof}

\begin{Rem} \label{rem:no-kiss}
Let $\bx,\by\in\Adm(\uQ)$, and abbreviate $\tbx:=\tau_{\uQ^f}(\bx)$,
$\tby:=\tau_{\uQ^f}(\by)$. By the construction of $\tau_{\uQ^f}$ and of $\uQ^f$,
a vertex $i\in H_0(\tbx)$ is a boundary vertex if and only if
$G_{\tbx}(i)$ is a fringe vertex. The same statement holds, if we replace
$\tbx$ by $\tby$.  Thus, a vertex $(j,i)$ of $H_{\uQ^f}(\tbx,\tby)$ is a
boundary vertex if and only if $G_\tby(j)=G_\tbx(i)$ is a fringe vertex and
in particular, \emph{both} $j$ and $i$ are boundary vertices.  Let us call
such vertices of $H_{\uQ^f}(\tbx,\tby)$ \emph{corner vertices}.  Clearly,
$H_{\uQ^f}(\tbx,\tby)$ has at most 4 corner vertices.  The connected
component in  $H^+_{\uQ^f}(\tbx,\tby)$ of a corner vertex is clearly a real
h-line, or a dual real h-line, and these are precisely the (possibly dual)
real h-lines, which are not (possibly dual) kisses.
\end{Rem}

\section{The Module Category of a Skewed-Gentle Algebra}
\label{sec:mod}
In this section $\Ka$ will be a  field with $\kar(\Ka)\neq 2$.

\subsection{Skewed-gentle algebras} \label{ssec:skga}
If $\uQ$ is a polarized  quiver, we write $\Ka\uQ:=\Ka Q/\ebrace{\cR(\uQ)}$
for the path algebra of the underlying quiver $Q$ modulo the relations
\[
  \cR(\uQ)=\{\alp\bet\mid \alp,\bet\in\Qord\text{ and } \us(\alp)=\ut(\bet)\}
  \cup\{\eps'\eps-e_{s(\eps)}\mid \eps\in\Qsp\}.
\]
Here, $e_i$ denotes the trivial path concentrated at the vertex $i$.
If $\uQ$ is skewed-gentle, the second set of relations can be rewritten
as $\{\eps_i^2-e_i\mid i\in\Qspv\}$. In this case $\Ka\uQ$ is a skewed-gentle
algebra in the sense of~\cite[Sec.~4.1]{GP99}.
In particular, it is a clannish algebra in the sense of
Crawley-Boevey~\cite{CB89b}, because the polynomial $X^2-1$ has two different
zeros, $-1$ and $1$, in our $\Ka$.
If $\Qsp=\emptyset$ clearly $\Ka\uQ$ is
a gentle algebra.

$\Ka\uQ$ admits an automorphism
$\chi$ of order $2$ which acts on the arrows as follows:
\[
  \chi(\alp)=\begin{cases} \phantom{-}\alp &\text{if } \alp\in\Qord,\\
    -\alp &\text{if } \alp\in\Qsp.
  \end{cases}
\]    
For later use it will be convenient to recall how $\Ka\uQ$ can be described
in terms of a quiver $\tQ$ and admissible relations $\tcR$. For this purpose
we define for each vertex $i\in Q_0$
\begin{align*}
S(i) & :=\begin{cases} \{+,-\} &\text{if } i\in\Qspv\\
\{o\} & \text{if } i\in\Qordv,
\end{cases}\\
\tQ_0 & := \{(i,\rho) \mid i\in Q_0 \text{ and } \rho\in S(i)\},\\
\tQ_1 & := \{{_\tau\alp_\sig} \mid \alp\in\Qord, \tau\in S(t(\alp)), \sig\in S(s(\alp))\},\\
s({_\tau\alp_\sig})&:=(s(\alp), \sig),\\
t({_\tau\alp_\sig})&:=(t(\alp), \tau),\\
\tcR &:= \bigcup_{\alp\bet\in\cR^\ord} \{
\sum_{\sig\in S(t(\bet))} {_\tau\alp_\sig}\,{_\sig\bet_\rho}\mid \tau\in S(t(\alp))
  \text{ and } \rho\in S(s(\bet))\}.
\end{align*}
Then it is not difficult to see that
\begin{align*}
\vph_\uQ\df\Ka\uQ &\ra\Ka\tQ/\ebrace{\tcR}, \text{ defined by}\\
  e_i    &\mapsto\sum_{\sig\in S(i)} e_{(i,\sig)} \text{ for all } i\in Q_0,\\
  \alp     &\mapsto \sum_{\tau\in S(t(\alp)), \sig\in S(s(\alp))} {_\tau\alp_\sig}
  \text{ for all } \alp\in\Qord,\\
  \eps_i  &\mapsto e_{(i,+)}- e_{(i,-)} \text{ for all } i\in\Qspv,
\end{align*}
is an $\Ka$-algebra isomorphism.  Note that
$\vph_\uQ^{-1}(e_{(i,\pm)})=1/2(e_i\pm\eps_i)$ for all $i\in\Qspv$.  Moreover,
for a representation $M= ((M_i)_{i\in Q_0}, (M(\alp))_{\alp\in Q_1})$ of $\Ka\uQ$
and $i\in\Qspv$,  the decomposition 
$M_i=\vph_\uQ^{-1}(e_{(i,+)})M_i \oplus \vph_\uQ^{-1}(e_{(i,-)})M_i$ corresponds
precisely to the two eigenspaces of the involutive endomorphism $M(\eps_i)$
of $M_i$.

\subsection{Parametrization of the indecomposable modules}
Let from now on $\uQ$ be a skewed-gentle polarized quiver.
Recall that by Proposition~\ref{prp:Adm}, the completion operation
$\widebar{?}$, defined in Equation~\eqref{eq:cpl}, 
induces a bijection between the equivalence classes of admissible
words $\Adm(\uQ)$ and the equivalence classes of strings and bands
$\St(\uQ)\cup [\pBa(\uQ)]$.
In Section~\ref{ssec:Hw} we constructed for each $\bx\in\Adm(\uQ)$
a homomorphism of polarized quivers $G_\bx\df\uH(\bx)\ra\uQ$.  These maps are
the combinatorial skeleton of Crawley-Boevey's description of the
indecomposable representations of a clannish algebra in terms of
arbitrary defining quadratic polynomials $q_i(X)=(X-a_i)(X-b_i)$ with
$a_i,b_i\in\Ka^*$ and $a_i\neq b_i$ for the special loops $\eps_i$.
We define now a simplified
quiver $\uH^o(\bx)$ together with a homomorphism of polarized quivers
\[
  G^o_\bx\df \uH^o(\bx)\ra\uQ,
\]
where the orientation of the arrows in $(G^o(\bx))^{-1}(\Qsp)$ does not depend
on the relation $>$ for words.
Following Crawley-Boevey~\cite{CB88}, this simplified construction is
  sufficient to describe the indecomposable
representations of the skewed-gentle algebras (a special case of clannish
algebras) in our situation, where we consider  
$\Ka\uQ$ with $\kar(\Ka)\neq 2$ and defining relations $X^2-1$ for all special
loops.

The quiver $\uH^o(\bx)$ is obtained from $\uH(\bx)$ by adding to
each arrow
$\xymatrix{a\ar[r]^{\nu_i}&b}$  with  $a\neq b$ and
$G_\bx(\nu_i)\in\Qsp$ an additional arrow $\xymatrix{a &\ar[l]_{\nu_i'} b}$ with
$G^o_\bx(\nu'_i)=G_\bx(\nu_i)$. 
In this case we declare the arrows $\nu_i$ and
$\nu'_i$ to be special. 
Note, that the polarized quiver $\uH^o(\bx)$ in general is not skewed-gentle, since it may contain special arrows which are not loops.
The restriction of $G^o_\bx$ to $\uH(\bx)$ is $G_\bx$.

In this situation we define now a $\Ka$-algebra $A_\bx$, and assign to
each arrow $\alp$ of $H^o(\bx)$ a unit $U_\bx(\alp)$ of $A_\bx$.  In 
Table~\ref{tab:AwUw} we list the algebras $A_\bx$ and the units which are not
of the form $U_\bx(\alp)=1_{A_\bx}$, according to the possible types of
admissible words.

It is  known~\cite[Lemma~2.3~(3)]{BTCB24} that for $\bx$ of type $(u,p)$ or of type $(p,u)$, the $\Ka$-algebra
$A_\bx$ from Table~\ref{tab:AwUw} can be identified with 
the group algebra of the cyclic group $\ebrace{\sig}$ of order $2$. It is
thus isomorphic to the semisimple algebra $\Ka\times\Ka$,
since $\kar(\Ka)\neq 2$. Moreover
$A_\bx\otimes\Ka\cong X^+\oplus X^-$, where $X^\pm$ is the one-dimensional
module on which $T$ acts $\pm 1$.

In Appendix~\ref{ssec:infdih} we will discuss the algebras $A_\bx$ for 
admissible words which contain two punctured letters, in more detail.
\begin{table}[h]
\begin{tabular}{|c|c|c|l|l|}\hline
  type of $\bx$   & $A_\bx$           & non triv. values of $U_\bx$ & autom. $\chi$ & autom. $\iota$ \\[.1cm]\hline
(u,u) & $\Ka$            & - & $\id$ & $\id$ \\[.1cm] \hline
(u,p)  & $\Ka[T]/(T^2-1)$ & $U_\bx(\eta_1)=T$ & $\chi(T)=-T$ &$\id$ \\[.1cm] \hline
(p,u)  & $\Ka[T]/(T^2-1)$ & $U_\bx(\eta_0)=T$ & $\chi(T)=-T$ & $\id$ \\[.1cm] \hline  
(p,p)     & $\displaystyle\frac{\Ka\ebrace{S,T}}{\ebrace{S^2\!-\!1, T^2\!-\!1}}$ 
          & $U_\bx(\eta_0)=S, U_\bx(\eta_1)=T$ 
          &  \parbox[c]{2.0cm}{$\chi(S)=-S$,\\ $\chi(T)=-T$} 
          &  \parbox[c]{2.0cm}{$\iota(S)=T$,\\ $\iota(T)=S$}\\[.1cm] \hline
 (b) & $\Ka[T,T^{-1}]$ 
 & $U_\bx(\nu_0)= {\begin{cases} T &\!\!\!\text{if } x_0\text{ dir.}\\
     T^{-1} & \!\!\!\text{if } x_0 \text{ inv.} \end{cases}}$ & $\id_{\Ka[T,T^{-1}]}$ & $\iota(T)=T^{-1}$  \\[.1cm] \hline
\end{tabular}
\vspace*{1ex}     
\caption{Algebras $A_\bx$, values of $U_\bx$, automorphism $\chi$ and $\iota$}
\label{tab:AwUw}
\end{table}

If we set furthermore $U_\bx(i)=A_\bx$ for each vertex $i\in\uH^o_o(\bx)$,
we see that $U_\bx$ defines a $\Ka\uH^o(\bx)\text{-}A_\bx$-bimodule, which we
denote, by a slight abuse of notation, also by $U_\bx$.
It is free as an $A_\bx$-right module, see for example~\cite[\S 3]{BTCB24}.
On the other hand, since the morphism of polarized quivers
$G^o_\bx\df \uH^o(\bx)\ra\uQ$ sends ordinary arrows to ordinary arrows,
it induces an exact push-forward functor
\[
  G^o_{\bx,\lam}\df\Ka\uH^o\lmd\ra\Ka\uQ\lmd,
\]
which is defined  on a representation $M$ of $\Ka\uH^o(\bx)$ as
\[
(G_{\bx,\lam} M)(a) := \bigoplus_{l\in G^{-1}_\bx(a)} M(l) 
\quad\text{for }  a \text{ either an arrow or a vertex of } \uQ.
\]
All together we obtain for each $\bx\in\Adm(\uQ)$ a functor
\[
  M^o_\bx\df A_\bx\lmd\ra\Ka\uQ\lmd, X\mapsto G^o_{\bx,\lam}(U_\bx\otimes_{A_\bx} X).
\]
\begin{Rem} \label{rem:isos}
  Between  the representations of $\Ka\uQ$ of the form $M^o_\bx(X)$ there are
  a few obvious isomorphisms, which come from isomorphisms between the
  underlying
  quivers.
  \begin{itemize}
    \item
      If $\bx\in \Adm_s(\uQ)$, then $M^o_\bx(X)$ and $M^o_{\bx^{-1}}(X)$
      are naturally isomorphic. 
      If $\bx\in\Adm(\uQ)$ is of type $(p,p)$, then $M^o_\bx(X)$ and
      $M^o_{\bx^{-1}}(X')$ are naturally isomorphic.
      Here, $X'$ is obtained from $X$ by twisting with
     the automorphism of $A_\bx$, which swaps $S$ and $T$.
     See Definition~\ref{rem:Adm} for the definition of $\Adm_s(\uQ)$.
     \item
     If $\bx\in\Adm(\uQ)$ and $\bx'\in [\bx]$, then
    $M^o_\bx(X)$ and $M^o_{\bx'}(X)$ are naturally isomorphic.
    Moreover, in this case $M^o_{\bx^{-1}}(X)$ is naturally
       isomorphic to $M^o_\bx(X^\iota)$. Here, $X^\iota$ is obtained from $X$ by
       twisting with the automorphism $\iota$ of~$A_\bw$, which
       is defined in Table~\ref{tab:AwUw}
     \item
       We have $M^o_\bx(X)^\chi\cong M^o_\bx(X^{\chi'})$, where $\chi'$ is the
       automorphism $\chi$ of $A_\bx$ from Table~\ref{tab:AwUw}, except when
       $\bx$ is
       a primitive band, which contains an odd number of special letters. In that
       case $\chi'(T)=-T$.  On the other hand, $\chi$ is the canonical
       automorphism of $\Ka\uQ$, defined in Section~\ref{ssec:skga},
       which changes the sign of all special loops.
     \end{itemize}
   \end{Rem}   
This motivates the following:
\begin{Def}
Let $\Ka$ be a field with $\kar{\Ka}\neq 2$ and $\uQ$ a skewed-gentle
polarized quiver.  Then, with the notation from Definition~\ref{rem:Adm}, we
set:
\begin{align}
 \MAdm(\uQ)  &:=\{(\bx,X)\mid \bx\in\Adm(\uQ)\text{ and } X\in A_\bx\ind\},\\
[\MAdm(\uQ)]&:=\{([\bx],X)\mid[\bx]\in[\Adm(\uQ)]\text{ and }X\in A_\bx\ind\}.
\end{align} 
Moreover, we consider the equivalence relation $\simeq$ on $[\MAdm(\uQ)]$,
which is defined by
\[
 ([\bx], X)\simeq ([\bx^{-1}], X^\iota)
\]
\end{Def}

We are now ready to state the main result from~\cite{CB89b} (see Section 3.8
   in loc.\!~cit.)  for the special case of a field $\Ka$ with $\kar(\Ka)\neq 2$
   and the choice of $q_i(X)=X^2-1$ for all special loops $\eps_i$, adapted
   to our notation. Recall, that the module category of each skewed-gentle algebra (a special class of clannish algebras) can be embedded into the
   representations of a clan by adding kernels and
   cokernels~\cite[Sec.~2.5]{CB89b}.  
   See also the main result in~\cite{BTCB24} for a more recent reference,
   where clannish algebras are directly addressed.  
   See also~\cite[Rem~2.6~(ii)]{BTCB24} for more context.
   Moreover, recall, that in view of
   Proposition~\ref{prp:Adm} we can replace the corresponding
   strings and bands by admissible words.
\begin{Thm}[Crawley-Boevey \& Bennett-Tennenhaus~\cite{CB89b} \&~\cite{BTCB24}] \label{Thm:classif}
  Let $\uQ$ be a skewed-gentle polarized quiver and $\Ka$ a
  field with $\kar(\Ka)\neq 2$.  Then 
  $\uQ$ the map
  \[
  \MAdm(\uQ)\ \ra\ \Ka\uQ\lmd,  (\bx, X)\mapsto M^o_\bx(X)
  \]
  induces a natural bijection between $[\MAdm(\uQ)]/\simeq$ and the
  isomorphism classes of indecomposable representations of the skewed-gentle
  algebra $\Ka\uQ$.  
\end{Thm}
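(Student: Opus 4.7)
The plan is to reduce the statement to Crawley-Boevey's general classification~\cite{CB89b} of indecomposable representations of clannish algebras, translated through our combinatorial framework. Since $\Ka\uQ$ is a clannish algebra (as noted in Section~\ref{ssec:skga}) and the polynomials $q_i(X)=X^2-1$ factor with two distinct roots $\pm1$ because $\kar(\Ka)\neq 2$, Crawley-Boevey's theorem provides a bijection between isomorphism classes of indecomposable modules and pairs consisting of a string or primitive band $\bw\in\St(\uQ)\cup\pBa'(\uQ)$ (modulo equivalence) together with an indecomposable module over an associated algebra, where the construction uses the quiver $H(\bw)$ equipped with an orientation of the loops $\eta_i$ coming from the lexicographic rule explained in Section~\ref{ssec:huQA}. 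The first step is thus to recall this classification precisely in the form needed.

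Next, I would show that the reduction from $H(\bw)$ to $H^o(\bx)$ in our setup is exactly the simplification which becomes available in characteristic different from $2$. The point is: in CB's original construction, the orientation of a loop $\eta$ in $H(\bw)$ with $G_\bw(\eta)=\eps\in\Qsp$ records a choice of eigenvalue $\pm 1$ for $M(\eps)$ at that vertex; since $M(\eps)^2=\id$ and $\kar(\Ka)\neq 2$, the representation $M(\eps)$ diagonalizes, and the two orientations of the loop correspond to the two eigenspaces. Replacing each such loop by a pair of arrows $\nu_i,\nu_i'$ in opposite directions, together with specifying the free bimodule structure encoded in the unit $U_\bx$ of Table~\ref{tab:AwUw}, yields exactly the same representation without referring to the lexicographic orientation rule. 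This translates CB's data (a string/band plus an indecomposable module over his parameter algebra) into our data $(\bx,X)\in\MAdm(\uQ)$ via Proposition~\ref{prp:Adm}, which canonically identifies admissible words for $\uQ$ with $\St(\uQ)\cup\pBa'(\uQ)$.

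Finally, the equivalence relation $\simeq$ on $[\MAdm(\uQ)]$ must be matched with CB's implicit duplication. Here I would invoke the three natural isomorphisms listed in Remark~\ref{rem:isos}. The identifications $\bx\sim\bx^{-1}$ and $\bx\sim\bx'$ for rotations of bands account for symmetries of the underlying string/band; the twist by $\iota$ on $A_\bx$ reflects that, for a symmetric string of type $(p,p)$ or for a band, inverting the admissible word swaps the two ``ends'' or reverses the cyclic order. Putting these identifications together precisely gives the quotient $[\MAdm(\uQ)]/\simeq$, so the induced map is a bijection. The main obstacle I anticipate is the careful bookkeeping at the level of admissible words vs.\ strings/bands in the symmetric cases (types $(p,p)$ and symmetric bands), where one must check that the passage via $A(\bw)$ and $\overline{?}$ of Proposition~\ref{prp:Adm} interacts correctly with Crawley-Boevey's identifications of symmetric bands and strings, including the fact that for symmetric bands the associated parameter algebra becomes $\Ka\langle S,T\rangle/\langle S^2-1,T^2-1\rangle$ rather than $\Ka[T,T^{-1}]$; once that is checked, the theorem follows directly from \cite{CB89b}.
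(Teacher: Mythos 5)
Your proposal is correct and follows essentially the same route as the paper: the paper does not reprove the classification but states it as Crawley-Boevey's theorem~\cite{CB89b} specialized to $\kar(\Ka)\neq 2$ and $q_i(X)=X^2-1$, with the passage from strings/bands to admissible words handled by Proposition~\ref{prp:Adm}, the char-$\neq 2$ simplification to $\uH^o(\bx)$ following~\cite{CB88}, and the equivalence $\simeq$ accounted for by the isomorphisms of Remark~\ref{rem:isos} — exactly the reduction you outline.
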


\begin{Rem} \label{rem:classif}
In case $\Qsp=\emptyset$, Theorem~\ref{Thm:classif} is just the
well-known classification of indecomposable representations  of gentle
algebras~\cite{AS87}, which are in particular special biserial, see~\cite{BR87}, \cite{WW85}. 
In this case no symmetric strings or symmetric (primitive) bands occur.
As observed in~\cite{GP99},  in our situation ($\kar(\Ka)\neq 2$)
the skewed-gentle algebra $\Ka\uQ$ is Morita equivalent to a skew group
algebra $\Ka\tQ\ebrace{\sig}$ (in the sense of~\cite{RR85}) for a gentle polarized quiver
$\tuQ$ with an involution $\sig$.  As hinted in~\cite{CB88},  one can
deduce, with some extra effort, the above theorem from this observation,
at least for $\Ka$ algebraically closed. See for example~\cite{AP17}, though
there, no effort was made to describe the indecomposable $\Ka\uQ$-modules
explicitly.
\end{Rem}

\subsection{Description of the Homomorphisms}
Let $\bx, \by\in\Adm(\uQ)$ and $X\in A_\bx\lmd$, $Y\in A_\by\lmd$.
Our aim is to describe   $\Hom_{\Ka\uQ}(M^o_\bx(X), M^o_\by(Y))$
in terms of the quiver $H_\uQ(\bx,\by)$.  We  consider
\[
f=(f_i)_{i\in Q_0}\in \Hom_{\Ka\uQ}(M^o_\bx(X), M^o_\by(Y))
\text{ with }
f_i\in \Hom_\Ka((M^o_\bx(X)(i), (M^o_\by(Y))(i)),
\]
such that  we have
\begin{equation} \label{eq:hom1}
  f_{t(\alp)}\circ (M^o_\bv(V))(\alp)=(M^o_\bw(W))(\alp)\circ f_{s(\alp)}
\end{equation}
for each arrow  $\alp\in Q_1$.
In view of the definition of $M^o_\bx(X)$ and $M^o_\by(Y)$ we consider
a block decomposition
\[
  f_i = (f_{m,l})_{(m,l)\in D(i)}\text{ for }
  D(i):= (G^o_\bx)^{-1}(i)\times (G^o_\by)^{-1}(i),
\]  
where all  $f_{m,l}\in\Hom_\Ka(X,Y)$.  We observe that we have by definition
\[
  H_0(\bv,\bw)=\cup_{i\in Q_0} D(i).
\]
For each long h-line
$L\in\cH_\uQ(\bv,\bw)$ we define
\begin{multline}
  \Hom_{\Ka\uQ}(M^o_\bx(X), M^o_\by(Y))_L:=\\
  \{ (f_{i,i})_{(j,i)\in H_0(\bx,\by)}\in\Hom_{\Ka\uQ}(M^o_\bx(X), M^o_\by(Y)) \mid f_{j,i}=0 \text{ if } (j,i)\not\in L_0\}.
\end{multline}
We recall that the underlying graph of
$L$ is one of the type $\sfA_n, \sfD'_n, \sftA_n$ or $\sftD'_n$ for some
$n$.  Accordingly, we define $A_L$ to be one of the algebras from
Table~\ref{tab:AwUw}, corresponding to an asymmetric string, a symmetric string,
an asymmetric band, or a symmetric band, respectively.
We observe, that in this situation we have two natural injective
algebra homomorphisms
\[
f_\bx\df A_L\ra A_\bx\text{ and } f_\by\df A_L\ra A_\by. 
\]  
In fact, if the underlying graph of $L$ is of type $\sftA_n$ or $\sftD'_n$,
both morphisms are the identity. If $L$ is of type $\sfA_n$, both maps are
just the usual structure homomorphisms of $A_\bx$ resp.~$A_\by$.  If $L$ is
of type $\sfD'_n$, with loop $(\eta_2,\eta_1)$.
We  define then naturally $f_\bx(T)=U_\bx(\eta_1)$ and $f_\by(T)=U_\by(\eta_2)$.
In particular, $A_\bx$- resp.~$A_\by$-modules have a natural $A_L$-module
structure.

\begin{Thm} \label{thm:hom}
  Let $\uQ$ be a skewed-gentle polarized quiver. For 
  $\bx,\by\in\Adm(\uQ)$, $X\in A_\bx\lmd$, $Y\in A_\by\lmd$ then
\begin{equation} \label{eq:Hom}
    \Hom_{\Ka\uQ}(M^o_\bx(X), M^o_\by(Y))=
   \!\!\bigoplus_{L\in\cH_\uQ(\bx,\by)}\!\Hom_{\Ka\uQ}(M^o_\bx(X), M^o_\by(Y))_L \cong
   \!\!\bigoplus_{L'\in\cR_\uQ(\bx,\by)} \!\Hom_{A_{L'}}(X,Y).
  \end{equation}
Here, for each $L\in\cH_\uQ(\bx,\by)$ and $(j,i)\in L_0$ the projection
\begin{equation} \label{eq:Hom2}
\Hom_{\Ka\uQ}(M^o_\bx(X), M^o_\by(Y))_L\ra \Hom_{A_L}(X,Y), f\mapsto f_{j,i}
\end{equation}
is a well-defined, $\Ka$-linear isomorphism.
\end{Thm}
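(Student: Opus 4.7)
The proof strategy is to reformulate \eqref{eq:hom1} block by block and match the resulting system of equations with the combinatorics of $H_\uQ(\bx,\by)$. First, I would fix an arrow $\alp\in Q_1$ and expand both sides of \eqref{eq:hom1} according to the decomposition $(M^o_\bx(X))(c)=\bigoplus_{i\in (G^o_\bx)^{-1}(c)} X$ (and symmetrically for $\by$). Each arrow $\mu$ of $\uH^o(\bx)$ lying over $\alp$ contributes a single nonzero block of $(M^o_\bx(X))(\alp)$ equal to a unit $u_\bx(\mu)\in A_\bx$ — namely $1$ for ordinary arrows, and the involutive units $S,T$ or their images for the pairs arising from special loops. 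Doing this on both sides of \eqref{eq:hom1}, one obtains a system whose equations come in two flavours: matching equations, which pair an arrow of $\uH^o(\by)$ over $\alp$ with an arrow of $\uH^o(\bx)$ over $\alp$, and unmatched equations, where only one side has such an arrow. The matching equations are precisely the arrows of $H_1(\bx,\by)=H_{1,+}\cup H_{1,\times}\cup H_{1,o}$, with the $\times$- and $\ominus/\oplus$-types reflecting the folding of each special loop into two opposite arrows of $\uH^o$.

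Next, I would show that the unmatched equations force the corresponding block $f_{j,i}$ to vanish, and then identify these as exactly the red vertices. If $(j,i)\in\Hred$, the condition $\widebarbar{\by}(j,\rho)>\widebarbar{\bx}(i,\rho)$ with $\Del=0$ means that immediately past $(j,i)$ in direction $\rho$, either $\by$ has an ordinary arrow (or a special loop) that $\bx$ does not, or one of the two words terminates at a trivial letter that has no partner on the other side. In any such case the relevant component of the commutativity equation reduces to $u\cdot f_{j,i}=0$ with $u$ a unit of $A_\bx$ or $A_\by$, forcing $f_{j,i}=0$. Combining this with the first step, the commutativity system decomposes along the connected components of $H_\uQ(\bx,\by)$, and every component containing a red vertex contributes nothing. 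This yields the direct-sum decomposition \eqref{eq:Hom} indexed by long h-lines $L\in\cH_\uQ(\bx,\by)$.

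For each long h-line $L$, I would then show that fixing a block $f_{j,i}$ at any chosen vertex $(j,i)\in L_0$ determines all other blocks along $L$ via the invertible unit-twist equations attached to the arrows of $L$. In the tree cases (types $\sfA_n$ and $\sfD'_n$) this is immediate. For the cyclic types $\sftA_n$ and $\sftD'_n$, once around the cycle produces the extra constraint that the chosen block commutes with the monodromy unit, which is precisely the image of the generator $T\in A_L$ under $f_\bx$ on one side and $f_\by$ on the other. Likewise, each of the loops at a boundary of an $\sfD'_n$ or $\sftD'_n$ contributes an $S$- or $T$-compatibility condition. Read through the bimodule structures $U_\bx, U_\by$, these conditions are exactly the statement that $f_{j,i}\colon X\to Y$ is $A_L$-linear, which proves \eqref{eq:Hom2}. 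The second isomorphism in \eqref{eq:Hom} then follows from Proposition~\ref{prp:bij-hl}, since the associated algebra $A_L$ depends only on the underlying-graph type, which is preserved when passing from a real h-line $L'$ to its enveloping long h-line $L$.

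The main obstacle is the careful bookkeeping of unit twists at special letters: because $G^o_\bx$ splits each special loop into a pair of opposite arrows in $\uH^o(\bx)$, the block equations at a special loop mix the actions of $1,S,T$ on $X$ (resp.~$Y$), tied together by the relations $S^2=T^2=1$. The assumption $\kar(\Ka)\neq 2$ is essential here: it ensures that $S,T$ are simultaneously diagonalizable with eigenvalues $\pm 1$, so the eigenspace decomposition of $X$ and $Y$ at a vertex $i\in\Qspv$ aligns with the preimages $(G^o_\bx)^{-1}(i)$ and $(G^o_\by)^{-1}(j)$, making the combinatorial bookkeeping of $H_\uQ(\bx,\by)$ faithful to the algebraic constraints.
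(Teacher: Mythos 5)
Your proposal follows essentially the same route as the paper's proof: you split the commutativity equations \eqref{eq:hom1} into a system indexed by the arrows of $H_\uQ(\bx,\by)$ plus unmatched ``zero'' equations, use invertibility of the unit twists to propagate blocks along connected components and to kill every component meeting a red vertex, and then identify the image of the projection at a vertex of a long h-line with $\Hom_{A_L}(X,Y)$ by a case analysis over the types $\sfA_n,\sfD'_n,\sftA_n,\sftD'_n$, invoking Proposition~\ref{prp:bij-hl} for the second isomorphism. This matches the paper's argument in structure and in all essential details, so no further comparison is needed.
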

We  will show this result in Section~\ref{ssec:syseq} below.

\begin{Rem}
  Recall from Proposition~\ref{prp:bij-hl} that we have a natural
  bijection $\iota\df\cR_\uQ(\bv,\bw)\ra\cH_\uQ(\bv,\bw)$ from the set of real
  h-lines to the set of long h-lines. Thus, the isomorphism
  in~\eqref{eq:Hom} follows from~\eqref{eq:Hom2}.
\end{Rem}

\subsection{Proof of Theorem~\ref{thm:hom}} \label{ssec:syseq}
Since $G_\bx^o\df\uH^o(\bx)\ra\uQ$ and $G_\by^o\df\uH^o(\by)\ra\uQ$ are
homomorphisms of polarized quivers, which send ordinary arrows to ordinary
arrows, we can argue as in~\cite{Kr91} that equation~\eqref{eq:hom1} splits
into the following system of equations:
\begin{subequations} \label{eq:hom2-sys}
\begin{alignat}{2} 
  f_{t(\nu),t(\mu)}\circ X(U_\bx(\mu)) &= Y(U_\by(\nu))\circ f_{s(\nu),s(\mu)}
  &\qquad (\nu,\mu)_+ &\in H_{1,+}(\bx,\by),\label{eq:hom2pl} \\
  f_{s(\nu),t(\mu)}     &= f_{t(\nu),s(\mu)}
  &\qquad (\nu,\mu)_\times&\in H_{1,\times}(\bx,\by), \label{eq:hom2x}\\
  f_{t(\eta),t(\mu)}     &= Y(U_\by(\eta))\circ f_{s(\eta),s(\mu)}
  & (\eta,\mu)_\oplus &\in H_{1,o}(\bx,\by), \label{eq:hom2op}\\
  f_{s(\nu),t(\eta)}\circ X(U_\bx(\eta)) &= f_{t(\nu),s(\eta)}
  & (\nu,\eta)_\ominus &\in H_{1,o}(\bx,\by),\label{eq:hom2om}\\
  f_{j,t(\mu)}\circ X(U_\bx(\mu))      &=0  &\qquad (j,\mu)&\in H^l(\bx,\by),
  \label{eq:hom2zl}\\
  0 &= Y(U_\by(\nu))\circ f_{s(\nu),i}& \qquad (\nu,i)&\in H^r(\bx,\by),
  \label{eq:hom2zr}
\end{alignat}
\end{subequations}
with $H^l(\bx,\by):=\cup_{\mu\in H_1^{\ord}(\bx)} H^l_\mu(\bx,\by)$ and
$H^r(\bx,\by):=\cup_{\nu\in H_1^{\ord}(\by)} H^r_\nu(\bx,\by)$, where
\begin{align*}
  H^l_{\mu}(\bx,\by) &=\{(j,\mu)\!\in H_0(\by)\!\times\! H_1(\bx)\mid (j,t(\mu))\in
    H_0(\bx,\by)\setminus \{t(\nu,\mu)\mid (\nu,\mu)\in H_1(\bx,\by)\}\},\\
  H^r_{\nu}(\bx,\by) &=\{(\nu,i)\in H_1(\by)\!\times\!H_0(\bx)\mid (s(\nu),i)\in
     H_0(\bx,\by)\setminus \{s(\nu,\mu)\mid (\nu,\mu)\in H_1(\bx,\by)\}\}.
\end{align*}

\begin{Rem}
(1) We used in equations~\eqref{eq:hom2x}, \eqref{eq:hom2op}
and~\eqref{eq:hom2om}  that $U_\bx(\nu)=1=U_\bx(\nu')$ if $\nu\neq \nu'$ is
a pair of special arrows in $\uH^o(\bx)$ with
$s(\nu)=t(\nu')\neq s(\nu')=t(\nu)$.

(2) We observe, that for each arrow in $H_{1,o}$ we obtain
from equation~\eqref{eq:hom1} in fact a pair of
equivalent equations, and we skipped in the corresponding equations~\eqref{eq:hom2op}
and~\eqref{eq:hom2om} one from each such pair of equations.

More precisely, if $(\eta,\mu)_\oplus\in H_{1,o}(\bx,\by)$ with 
$t(\eta,\mu)_\oplus=(t(\eta),t(\mu))$ and $s(\eta,\mu)_\oplus=(s(\eta),s(\mu))$, 
where $s(\mu)\neq t(\mu)$ and $s(\eta)=t(\eta)$.
Then there is also an arrow $\mu'\in \uH^o_1(\bx)$ with $s(\mu')=t(\mu)$ and
$t(\mu')=s(\mu)$.  The corresponding equations become~\eqref{eq:hom2op} and
\[
\hspace*{-7cm}\eqref{eq:hom2op}'\qquad\qquad  f_{t(\eta),s(\mu)}=Y(U_\by(\eta))\circ f_{s(\eta),t(\mu)}
\]
since $U_\bx(\mu)=U_\bx(\mu')=1_{A_\bx}$. These two equations are equivalent
since $U_\by(\eta)^2=1_{A_\by}$, and for this reason we drop in this case 
equation~\eqref{eq:hom2op}' in our system of equations~\eqref{eq:hom2-sys}.\\
Similarly,  for each arrow $(\nu,\eta)_\ominus\in H_{1,o}(\bx,\by)$ we obtain additionally to 
equation~\eqref{eq:hom2om} an equation
\[
\hspace{-7cm}\eqref{eq:hom2om}'\qquad\qquad f_{t(\nu),t(\eta)}\circ X(U_\bx(\eta)) = f_{s(\nu),s(\eta)}.
\]
Again, these two equations are equivalent, and for this reason we drop in this case 
equation~\eqref{eq:hom2om}' from our system of equations~\eqref{eq:hom2-sys}.

(3)  It is easy to see that $H^l_{\mu}(\bx,\by)=\emptyset$ if $G_\bx(\mu)\in\Qsp$
and $H^r_\nu(\bx,\by)=\emptyset$ if $G_\by(\nu)\in\Qsp$.
\end{Rem}

Since the $U_\bx(\mu)\in A_\bx$ and
$U_\by(\nu)\in A_\by$ are by construction  units, $X(U_\bx(\mu))\in\GL(X)$
and $Y(U_\by(\nu))\in\GL(Y)$. Thus, we conclude from the system of
equations~\eqref{eq:hom2-sys} that for
$f= (f_{j,i})_{(j,i)\in H_0(\bx,\by)}\in \Hom_{\Ka\uQ}(M^o_\bx(X),M^o_\by(Y))$ and
$(j,i), (j',i')\in H_0(\bx,\by)$ lying on the same connected component of
$H_\uQ(\bx,\by)$ there exist $A\in\GL(X)$ and $B\in\GL(Y)$ such that
$f_{j',i'}= Bf_{j,i}A^{-1}$.  In particular, if $(j',i')$ does \emph{not} lie
on a long h-line, $f_{j',i'}=0$ by equations~\eqref{eq:hom2zl}
and~\eqref{eq:hom2zr} and the definition of the red vertices in
Section~\ref{ssec:Hvw}. In fact, 
\[
\Hred(\bx,\by)=\{(j,t(\mu))\mid (j,\mu)\in H^l(\bx,\by)\}\cup
\{(s(\nu),i)\mid (\nu,i)\in H^r(\bx,\by)\},
\]
and  thus $f_{j,i}=0$ for all $(j,i)\in\Hred(\bx,\by)$ since the maps $X(U_\bx(\mu))$
resp. $Y(U_\by(\nu))$ are invertible. This shows the first claim of Theorem~\ref{thm:hom}, and we conclude that the projection
\[
\Phi_L\df\Hom_{\Ka\uQ}(M^o_\bx(X), M^o_\by(Y))_L\ra \Hom_\Ka(X,Y), f\mapsto f_{j,i}
\]
is injective for any  long h-line $L$ and $(j,i)\in L_0$. 
It remains to show that
$\operatorname{Im}(\Phi_L)=\Hom_{A_L}(X,Y)\subset\Hom_\Ka(X,Y)$.
To this end we go over the $4$ possible types of $L$.
\begin{itemize}
\item[($\sfA_n$)] In this case, $A_L=\Ka$ and there are no restrictions on
  $f_{j,i}$ since $L$ contains no red points.
\item[($\sfD'_n$)] In this case we may assume that $(j,i)\in L_0$ is the
  unique vertex of $L$ with  loop $(\eta',\eta)\in H_{1,+}(\bx,\by)$.
  In this case equation~\ref{eq:hom2pl} for $(\eta',\eta)$ means
  \[
    f_{j,i}\circ X(U_\bx(\eta))=Y(U_\by(\eta'))\circ f_{j,i}.
  \]  
  Thus, in fact $f_{j,i}\in\Hom_{A_L}(X,Y)$.
\item[($\sftA_n$)]  In this case we may assume that $\bx=\by$ is an asymmetric
  band and $(j,i)=(0,0)$. Then the equations~\eqref{eq:hom2pl} for
  $(\nu_i,\nu_i)_+$ with $i=1,2,\ldots, n$ force $f_{0,0}=f_{1,1}$ since
  $U_\by(\nu_i)=1$ for $i=1,2,\ldots, n$. Thus, equation~\eqref{eq:hom2pl}
  for $(\nu_0,\nu_0)_+$ implies
  \[
    f_{0,0}\circ X(U_\bx(\nu_0))=Y(U_\by(\nu_0))\circ f_{0,0}.
  \]  
  Thus, in fact $f_{0,0}\in\Hom_{A_L}(X,Y)$.
\item[($\sftD'_n$)]   In this case we may assume that $\bx=\by$ is an asymmetric
  band and $(j,i)=(1,1)$. Then the equations~\eqref{eq:hom2pl} for
  $(\nu_i,\nu_i)_+$ with $i=1,2,\ldots, n-1$ force $f_{1,1}=f_{n,n}$ since
  $U_\by(\nu_i)=1$ for $i=1,2,\ldots, n-1$. Thus, equation~\eqref{eq:hom2pl}
  for $(\eta_1,\eta_1)_+$ and $(\eta_2,\eta_2)_+$ imply
  \[
    f_{1,1}\circ X(U_\bx(\eta_i))=Y(U_\by(\eta_i))\circ f_{1,1}\text{ for } i=1,2.
  \]  
  Thus, in fact $f_{1,1}\in\Hom_{A_L}(X,Y)$.
\end{itemize}

Using the same arguments as in~\cite[Sec.~5]{Ge99}, we can deduce from 
Theorem~\ref{thm:hom} the following combinatorial description of the
Auslander-Reiten translate in $\Ka\uQ\lmd$:

\begin{Cor} \label{cor:DTr}
Let $\uQ$ be a skewed-gentle polarized quiver and $\Ka$ a field with
$\kar(\Ka)\neq 2$.
Suppose  $\bx\in\Adm(\uQ)$ is not a projective string, and $X\in A_\bx\lmd$
indecomposable, then the Auslander-Reiten translation of $M^o(X)$ is
    $\tau_{\Ka\uQ} M^o_\bx(X)\cong M^o_{\tau_{\uQ}(\bx)}(X^\chi)$.
\end{Cor}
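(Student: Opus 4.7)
The plan is to follow the strategy of \cite[Sec.~5]{Ge99}: for each indecomposable $M^o_\bx(X)$, exhibit an explicit candidate for the almost-split sequence ending in $M^o_\bx(X)$, then verify the almost-split property by applying the hom-description of Theorem~\ref{thm:hom}.

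First I would split into cases according to the type of $\bx$. For $\bx \in \AdmSt(\uQ)$ non-projective and not of type $(p,p)$, the operations $\bx \mapsto \bx[1]$ and $\bx \mapsto [1]\bx$ from Section~\ref{ssec:proj} give rise to the standard irreducible morphisms into $M^o_\bx(X)$, and one expects a short exact sequence of the shape
\[
0 \to M^o_{\tau_\uQ \bx}(X^\chi) \to M^o_{\bx[1]}(X_1) \oplus M^o_{[1]\bx}(X_2) \to M^o_\bx(X) \to 0,
\]
where a summand is omitted whenever the corresponding operation is undefined, and where the $A$-modules $X_1, X_2$ are obtained from $X$ by a natural restriction/extension of scalars. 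For $\bx$ of type $(p,p)$ one has $\tau_\uQ \bx = \bx$, so the desired isomorphism $\tau_{\Ka\uQ} M^o_\bx(X) \cong M^o_\bx(X^\chi)$ must come purely from an internal twist of the $A_\bx$-action. For $\bx \in \AdmBa(\uQ)$ the candidate sequence is the standard one for band modules over a special biserial algebra, again twisted by $\chi$.

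Next I would verify that the candidate sequence is almost-split. Following the argument in~\cite[Sec.~5]{Ge99}, this reduces to showing that the right-hand morphism is right almost-split, i.e.\ that every non-isomorphism $M^o_\by(Y) \to M^o_\bx(X)$ factors through the middle term. By Theorem~\ref{thm:hom} such morphisms are parametrised by $\cH_\uQ(\by,\bx)$ (equivalently $\cR_\uQ(\by,\bx)$, via Proposition~\ref{prp:bij-hl}), so the factorization problem becomes the combinatorial statement that every h-line of $H_\uQ(\by,\bx)$ except the generalized diagonal lifts to an h-line of $H_\uQ(\by,\bx[1])$ or of $H_\uQ(\by,[1]\bx)$; this is a case analysis analogous to the one of~\cite[Sec.~5]{Ge99}, but now also requiring h-lines of types $\sfD'$ and $\sftD'$ arising from special loops to be treated explicitly.

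The main obstacle is the correct identification of the twist $X \mapsto X^\chi$. Whenever $\tau_\uQ$ passes through a special letter, the underlying $A_\bx$-module structure on the two sides of the candidate sequence must be compared via a non-trivial automorphism of $A_\bx$, and the resulting discrepancy is precisely the twist by $\chi$ of Table~\ref{tab:AwUw}. The sign-behaviour already visible in Remark~\ref{rem:isos} for bands containing an odd number of special letters, and for the identifications $M^o_\bx(X) \cong M^o_{\bx^{-1}}(X^\iota)$, must be tracked carefully. The hypothesis $\kar(\Ka)\neq 2$ is essential here: it guarantees that $\chi$ is a non-trivial involution whose $\pm 1$-eigenspaces on $A_\bx$ are both non-zero, which is what allows the internal twist to realize the Auslander--Reiten translation in the symmetric and band cases where $\tau_\uQ \bx = \bx$.
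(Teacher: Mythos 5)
Your proposal is correct and follows essentially the same route as the paper, which proves the corollary simply by invoking the arguments of \cite[Sec.~5]{Ge99} (explicit candidate almost-split sequences built from the hook/cohook operations $\bx\mapsto\bx[1]$, $[1]\bx$) combined with the homomorphism description of Theorem~\ref{thm:hom}. If anything, your outline is more explicit than the paper's one-line justification, and the points you flag (the type $(p,p)$ and band cases with $\tau_\uQ\bx=\bx$, and the sign bookkeeping for the twist $\chi$ as in Remark~\ref{rem:isos}) are exactly the places where care is needed.
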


\subsection{E-invariant} \label{ssec:Einv}
For a finite dimensional $\Ka$-algebra $A$ and $M, N\in A\lmd$ the
\emph{E-invariant} is by definition
\[
  E_A(M,N):=\dim_\Ka\Hom_A(M,\tau_A(N)) +
  \dim_\Ka\Hom_A(N,\tau_A(M))
\]

Let  $\uQ$ be skewed-gentle polarized quiver with fringing
$\uQ^f$, $\Ka$ a field with $\kar(\Ka)\neq 2$ and
$(\bx,X), (\by,Y)\in\MAdm(\uQ)$. Our aim is to find a formula for
$E_{\Ka\uQ}(M^o_\bx(X), M^o_\by(Y))$ in terms of the K-triples
$\cK_{\uQ^f}(\tau_f\bx, \tau_f\by)$ and $\cK_{\uQ^f}(\tau_f\by, \tau_f\bx)$. 

\begin{Def} \label{def:E-invar}
  Let $\uQ$ be a skewed-gentle polarized quiver with fringing $\uQ^f$, and
  $\bx,\by\in\Adm(\uQ)$.  We abbreviate $\tbx:=\tau_{\uQ^f}(\bx)$ and
  $\tby:=\tau_{\uQ^f(\by)}$.

    For $\sfT\in\{\sfA, \sfD', \sftA, \sftD'\}$ we define the
    set of \emph{$K$-triples of type $\sfT$} as follows:
\begin{align*}
 \cK_{\uQ^f}(\tbx,\tby)_{\sfT} &:= \{[(G,\phi_q,\phi_s)]
      \in\cK_{\uQ^f}(\tbx,\tby)\mid \uH(G)\text{ is of type } \sfT_n
      \text{ for some } n\},\\
 \sfT_{\uQ^f}(\bx,\by)&:= \cK_{\uQ^f}(\tbx,\tby)_\sfT\coprod\cK_{\uQ^f}(\tby,\tbx)_{\sfT}.\\
\intertext{The \emph{pairs of punctured letters of type $\sfD'$} are given by}\\
\sfP_\uQ(\bx,\by) &:=\{(j,i)\in\{0,1\}\times\{0,1\}\mid
(\eta_j,\eta_i)\in\Hsp(\by)\times\Hsp(\bx)\text{ and }\\
&\qquad G_\by(\eta_j)=G_\bx(\eta_i)\} \setminus \sfP'_\uQ(\bx,\by),\text{ with}\\
\sfP'_\uQ(\bx,\by)&:=\begin{cases}
\{(0,0),(1,1)\}&\text{ if } \bx=\by^{\phantom{-1}}\in\AdmSt(\uQ)\\
\{(1,0),(0,1)\}&\text{ if } \bx=\by^{-1}\in\AdmSt(\uQ)\\
   \qquad\emptyset       &\text{ else.}
  \end{cases}    
\intertext{The \emph{band orientation} is}
  \Diag_b(\bx,\by)&:=\begin{cases}
\phantom{-}  1 &\text{if } [\bx]=[\by]\quad\in\Adm_b(\uQ),\\
 -1 &\text{if } [\bx]=[\by^{-1}]\in\Adm_b(\uQ),\\
\phantom{-} 0  &\text{else.}
  \end{cases}
\end{align*}
\end{Def}

\begin{Expl}
Note that $\sfP(\bx,\by)=\emptyset$ unless there are (special) loops 
$\eta\in H_1^\spe(\bx)$ and $\eta'\in H_1^\spe(\by)$ with $G_\bx(\eta)=G_\by(\eta')$.\\
If $\bx$ is a symmetric band and $\{\eta_0,\eta_1\}=H_1^\spe(\bx)$ is the set of (special) loops of $\uH(\bx)$, then we have $\sfP_\uQ(\bx,\bx)=\{(0,1), (1,0)\}$ if $G_\bx(\eta_0)=G_\bx(\eta_1)$, and $\sfP_\uQ (\bx,\bx)=\emptyset$ else.\\
In the situation of Example~\ref{ssec:exHxy} we have $\sfP_\uQ(\bx,\by)=\{(0,1), (1,1)\}$
\end{Expl} 

\begin{Rem}
  (0) Note that for each admissible triple $(G,\phi_q,\phi_s)$ with
  $[(G,\phi_q,\phi_s)]\in\sfD'_{\uQ^f}(\bx,\by)$  we have
  $\uH(G)_1^{\spe}=\{\eta_G\}$ for the unique loop $\eta_G$ of $\uH(G)$.  

  (1)  The sets $\sfT_{\uQ^f}(\bx,\by)$ for
  $\sfT\in\{\sfA, \sfD', \sftA, \sftD'\}$
  depend a priori on the choice of the  fringing  $\uQ^f$.  
  However, by Proposition~\ref{prp:kisses} this dependence is only up to a
  natural bijection.
  
  (2)  If the type  $[(G,\phi_q,\phi_s)]\in\cK_{\uQ^f}(\tbx,\tby)$ belongs
  to $\{\sftA, \sftD'\}$, both $\phi_q$ and $\phi_s$ are isomorphisms.
  In particular, $\tbx=\bx$ and $\tby=\by$ as well as $[\bx]\simeq [\by]$.
  We conclude that
\begin{align*}  
\abs{\sftA_{\uQ^f}(\bx,\by)}\cdot\abs{\sftD'_{\uQ^f}(\bx,\by)} &=0\quad\text{and}\\
\abs{\sftA_{\uQ^f}(\bx,\by)}+\abs{\sftD'_{\uQ^f}(\bx,\by)} &=
  2\abs{\Diag_b(\bx,\by)}\in\{0, 2\}.
\end{align*}

  (3) We will see in the next Lemma that
  $\abs{\sfD_{\uQ^f}(\bx,\by)}=\abs{\sfP_\uQ(\bx,\by)}$,
  and thus
  \[
  \abs{\sfA_{\uQ^f}(\bx,\by)}+\abs{\sfP_\uQ(\bx,\by)}+2\abs{\Diag_b(\bx,\by)}
    =\abs{\cK_{\uQ^f}(\tbx,\tby)\cup \cK_{\uQ^f}(\tby,\tbx)}.
  \]
\end{Rem}

\begin{Lem}\label{Lem:P=D}
We have a natural bijection
\begin{multline*}
  P\df\sfD'_{\uQ^f}(\bx,\by)\ra\sfP_{\uQ}(\bx,\by),\\
  [(G,\phi_q,\phi_s)] \mapsto \begin{cases}
    (j,i) &\text{if } (\phi_q(\eta_G), \phi_s(\eta_G))=(\eta_i,\eta_j)
    \text{ and } \operatorname{Im}(\phi_q)\subset\uH(\bx)\\
      (i,j) &\text{if } (\phi_q(\eta_G), \phi_s(\eta_G))=(\eta_i,\eta_j)
      \text{ and } \operatorname{Im}(\phi_q)\subset\uH(\by),
    \end{cases}   
\end{multline*}
where $\eta_G$ is the unique loop  of $\uH(G)$ for each
$[(G,\phi_q,\phi_s)]\in \sfD'_{\uQ^f}(\bx,\by)$.  
\end{Lem}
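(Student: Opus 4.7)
The plan is to construct $P$ and an explicit inverse, characterizing K-triples of type $\sfD'_n$ via a \emph{maximal common walk} from a matched pair of special loops in $\uH(\tbx)$ and $\uH(\tby)$. Throughout, the key underlying fact is that by Remark~\ref{rem:Admtau} the type $(a,b)\in\{u,p\}^2$ of an admissible word is preserved by $\tau_{\uQ^f}$, so the special loops of $\uH(\tbx)$ coincide with those of $\uH(\bx)$, and likewise for $\by$.

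First I would verify that $P$ is well-defined and lands in $\sfP_\uQ(\bx,\by)$. For a K-triple $[(G,\phi_q,\phi_s)] \in \sfD'_{\uQ^f}(\bx,\by)$, the quiver $\uH(G)$ is of type $\sfD'_n$ and has a unique loop $\eta_G$. Strictness of $\phi_q,\phi_s$ forces $\phi_q(\eta_G)$ and $\phi_s(\eta_G)$ to be loops, necessarily of the form $\eta_i \in \Hsp(\bx)$ and $\eta_j \in \Hsp(\by)$ by the previous remark. Compatibility $G_{\tbx}\phi_q = G = G_{\tby}\phi_s$ gives $G_\bx(\eta_i) = G_\by(\eta_j)$, and invariance under equivalence of triples is immediate. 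To rule out $\sfP'_\uQ(\bx,\by)$ I would argue by contradiction: by Propositions~\ref{prp:trip-line} and~\ref{prp:kisses}, the K-triple corresponds to a kiss in $\cR_{\uQ^f}(\tbx,\tby)$, but a diagonal pair from $\sfP'_\uQ$ produces a generalized diagonal h-line whose endpoints are corner vertices of $H_{\uQ^f}(\tbx,\tby)$, which by Remark~\ref{rem:no-kiss} is not a kiss.

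Next I would construct the inverse. Given $(j,i) \in \sfP_\uQ(\bx,\by)$, the loops $\eta_j \in \Hsp(\by)$ and $\eta_i \in \Hsp(\bx)$ identify with loops in $\uH(\tby)$ and $\uH(\tbx)$ sharing the common image $\eps_a^* = G_\by(\eta_j) = G_\bx(\eta_i)$ in $\uQ \subseteq \uQ^f$. Starting from this matched pair, form the maximal simultaneous walk along the unique linear tails of $\uH(\tbx)$ and $\uH(\tby)$ emanating from the loops, requiring at each step that the images in $\uQ^f$ agree. This produces a polarized quiver $\uH(G)$ of type $\sfD'_n$ together with strict embeddings $\phi_q,\phi_s$ and an induced winding $G$ satisfying $G_{\tbx}\phi_q = G = G_{\tby}\phi_s$. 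Maximality delivers properties (q) and (s); property (K) holds precisely because $(j,i) \notin \sfP'_\uQ(\bx,\by)$, since in the diagonal case the walk has no compatibility obstruction along $\bx$ and is forced to extend all the way through the added fringe, sending the boundary vertex of $\uH(G)$ onto a fringe vertex of $\uH(\tbx)$ or $\uH(\tby)$.

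The two constructions are mutually inverse because $(j,i)$ uniquely determines (up to equivalence) the maximal common walk, while conversely any K-triple is itself such a maximal walk by virtue of (q), (s), (K). I expect the main obstacle to be the rigorous verification that the maximal walk terminates at a vertex compatible with (K) exactly when $(j,i) \notin \sfP'_\uQ(\bx,\by)$; this requires a careful case analysis tracing how $\tbx,\tby$ extend $\bx,\by$ through the fringing, and confirming that for any non-diagonal matched pair of loops the walk must encounter a genuine obstruction --- arising from disagreeing letters of $\bx$ and $\by$ --- strictly before reaching the fringe.
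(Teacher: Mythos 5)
Your first half (well-definedness of $P$) is essentially sound and close to the paper's mechanism: loops of $\uH(G)$ must go to the special loops $\eta_i,\eta_j$, the images under $G_\bx,G_\by$ agree, and the diagonal/anti-diagonal pairs of $\sfP'_\uQ(\bx,\by)$ are excluded because the corresponding components of $H_{\uQ^f}(\tbx,\tby)$ end in corner vertices (Remark~\ref{rem:no-kiss}), hence are not kisses. The paper argues the whole lemma more structurally: it identifies $\sfP_\uQ(\bx,\by)$ with the loops of $H_\uQ(\bx,\by)$ not on a (anti-)diagonal, transports these to loops of $H_{\uQ^f}(\tbx,\tby)$ via Remark~\ref{rem:Admtau}, invokes Remark~\ref{rem:dual}~(2) to see that each such loop lies on a kiss \emph{or a dual kiss} of type $\sfD'$, and then converts kisses to K-triples by Proposition~\ref{prp:trip-line}.

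The genuine gap is in your inverse construction. You claim that the maximal common walk emanating from a matched pair of loops "delivers properties (q) and (s)", i.e.\ always produces a K-triple for the ordered pair $(\tbx,\tby)$. This is false in general: when the walk terminates because the continuations in $\uH(\tbx)$ and $\uH(\tby)$ disagree, the orientations at the stopping vertex may be such that an unlifted ordinary arrow of $\uH(\tbx)$ \emph{starts} at $\phi_q(j)$ (killing (q)) while an unlifted arrow of $\uH(\tby)$ \emph{ends} at $\phi_s(j)$ (killing (s)); in that configuration the walk is a K-triple only for the opposite order $(\tby,\tbx)$, i.e.\ the loop lies on a \emph{dual} kiss. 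This dichotomy is exactly why the target of $P$ is the disjoint union $\cK_{\uQ^f}(\tbx,\tby)_{\sfD'}\coprod\cK_{\uQ^f}(\tby,\tbx)_{\sfD'}$, and it is the content of Remark~\ref{rem:dual}~(2) that the paper relies on. As written, your inverse never produces elements of the second summand, does not show that each $(j,i)\in\sfP_\uQ(\bx,\by)$ yields exactly one K-triple across the two orders, and consequently neither surjectivity onto $\sfD'_{\uQ^f}(\bx,\by)$ nor injectivity of $P$ is actually established. To repair it you must analyse the colour labels (equivalently the word comparisons) at the endpoint of the loop's component in $H^+_{\uQ^f}(\tbx,\tby)$ and show that, for $(j,i)\notin\sfP'_\uQ(\bx,\by)$, this component is either a kiss or a dual kiss of type $\sfD'$, but not both.
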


\begin{proof}
By construction, the elements of $\sfP_{\uQ}(\bx,\by)$ can be identified with
the loops of $H_\uQ(\bx,\by)$, which do not lie on a diagonal or an
anti-diagonal. 
Next, we  have clearly a natural bijection between the loops of 
$H_\uQ(\bx,\by)$ and the loops of 
$H_f(\bx,\by):=H_{\uQ^f}(\tau_f\bx,\tau_f\by)$, 
see  the remark at the end of Section~\ref{ssec:fring}.
Moreover, each loop $(\eta_j,\eta_i)$ of
$H_f(\bx,\by)$ with $(i,j)\in \sfP_\uQ(\bx,\by)$ lies  on a kiss of type
$\sfD'$.  In fact,  each loop  $(\eta_j,\eta_i)$ in $H_f(\bx,\by)$ lies on a
(possibly dual) real h-line  by Remark~\ref{rem:dual}~(2). This real
(possibly dual) h-line is indeed a (possibly dual) kiss of type $\sfD'$
if $(j,i)\in \sfP_\uQ(\bx,\by)$, by the discussion in Remark~\ref{rem:dual}~(2).
Our claim follows now from the bijection between kisses and K-triples from
Proposition~\ref{prp:trip-line}. 
\end{proof}  

With the notation from Definition~\ref{def:E-invar} we have the
following formula:
\begin{Prop} \label{E-inv}
  Let $\uQ$ be a skewed-gentle polarized quiver with fringing $\uQ^f$
  and $\Ka$ a field with
$\kar(\Ka)\neq 2$.
For  $\bx,\by\in\Adm(\uQ)$, and $X\in A_\bx\lmd$, $Y\in A_\by\lmd$ we have
\begin{multline} \label{eqn:E-inv}
  E_{\Ka\uQ}(M^o_\bx(X), M^o_\by(Y)) = 
    \abs{\sfA_{\uQ^f}(\bx,\by)}\times (\dim_\Ka X)\times (\dim_\Ka Y)  
 \ +\hspace{-1.3em} \sum_{(j,i)\in \sfP_{\uQ}(\bx,\by)}\hspace{-1.2em} \dim_\Ka \Hom_{A(j,i)}(X, Y^\chi)\\ 
   \hspace*{5em}+\abs{\Diag_b(\bx,\by)}\times
    (\dim_\Ka \Hom_{A_\bx}(X', Y^\chi) + \dim_\Ka \Hom_{A_\bx}(Y',X^\chi)),\\
\end{multline}
where $Z':=Z$ if $\Diag_b(\bx,\by)\in \{0, 1\}$ and $Z':=Z^\iota$ if $\Diag_b(\bx,\by)=-1$
for any $Z\in A_\bx\lmd$.  
Moreover,
$A(j,i):=\Ka[T]/(T^2-1)$ comes with injective $\Ka$-algebra homomorphisms
$A(j,i)\ra A_\bx, T\mapsto U_\bx(i)$ and $A(j,i)\ra A_\by, T\mapsto U_\by(j)$,
which give $X$ and $Y^\chi$ an $A(j,i)$-module structure via restriction
of scalars.
\end{Prop}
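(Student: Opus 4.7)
The plan is to expand the E-invariant through the following chain of identifications, then stratify the resulting combinatorial sum according to the types of K-triples appearing in Definition~\ref{def:E-invar}.

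First, by Corollary~\ref{cor:DTr}, we have $\tau_{\Ka\uQ}M^o_\by(Y)\cong M^o_{\tau_\uQ\by}(Y^\chi)$ whenever $\by$ is not projective (and the $\Hom$ into the $\tau$-translate vanishes otherwise). Applying this to both summands of
\[
E_{\Ka\uQ}(M^o_\bx(X),M^o_\by(Y))
 =\dim_\Ka\Hom_{\Ka\uQ}(M^o_\bx(X),\tau_{\Ka\uQ}M^o_\by(Y))
 +\dim_\Ka\Hom_{\Ka\uQ}(M^o_\by(Y),\tau_{\Ka\uQ}M^o_\bx(X)),
\]
then invoking Theorem~\ref{thm:hom}, the right-hand side rewrites as
\[
\sum_{L\in\cR_\uQ(\bx,\tau_\uQ\by)}\dim_\Ka\Hom_{A_L}(X,Y^\chi)
\ +\ \sum_{L'\in\cR_\uQ(\by,\tau_\uQ\bx)}\dim_\Ka\Hom_{A_{L'}}(Y,X^\chi).
\]
By Proposition~\ref{prp:kisses}~(b) the indexing sets are in natural bijection with $\cK_{\uQ^f}(\tbx,\tby)$ and $\cK_{\uQ^f}(\tby,\tbx)$, and by Proposition~\ref{prp:trip-line} each kiss corresponds to a K-triple whose source winding has underlying graph of one of the four types in Table~\ref{tab:AD-graphs}. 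Moreover, the bijection respects the type, so that $A_L$ depends only on this type, and the algebras $A_L\hookrightarrow A_\bx$, $A_L\hookrightarrow A_\by$ are the canonical inclusions described after Table~\ref{tab:AwUw}.

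Next I stratify the combined sum by type, using the notation of Definition~\ref{def:E-invar}. For type $\sfA$ one has $A_L=\Ka$, so each such K-triple contributes $(\dim_\Ka X)(\dim_\Ka Y)$; summing over $\sfA_{\uQ^f}(\bx,\by)$ gives the first term of~\eqref{eqn:E-inv}. For type $\sfD'$, Lemma~\ref{Lem:P=D} gives a bijection $P\colon\sfD'_{\uQ^f}(\bx,\by)\to\sfP_\uQ(\bx,\by)$, and for $(j,i)=P([(G,\phi_q,\phi_s)])$ the algebra $A_L=\Ka[T]/(T^2-1)$ is identified with $A(j,i)$ via the maps $T\mapsto U_\bx(\eta_i)$ and $T\mapsto U_\by(\eta_j)$, exactly as in the statement of the proposition. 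Summing $\dim_\Ka\Hom_{A(j,i)}(X,Y^\chi)$ over $\sfP_\uQ(\bx,\by)$ yields the second term. For types $\sftA$ and $\sftD'$, these K-triples exist only when $[\bx]\simeq[\by]$ or $[\bx]\simeq[\by^{-1}]$ in $[\Adm_b(\uQ)]$, i.e.\ when $\abs{\Diag_b(\bx,\by)}=1$. In each case $A_L$ coincides with $A_\bx\cong A_\by$, and the ``generalized diagonal'' and ``generalized anti-diagonal'' account for the two summands $\dim_\Ka\Hom_{A_\bx}(X',Y^\chi)$ and $\dim_\Ka\Hom_{A_\bx}(Y',X^\chi)$.

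The main obstacle, and the step requiring care, is the identification of $A_L$-module structures in the type $\sfD'$ and band cases. Concretely, for type $\sfD'$ one must verify that the $A_L$-structure on $X$ induced by $f_\bx$ in Section~\ref{ssec:syseq} together with the $A_L$-structure on $Y^\chi$ induced by $f_\by$ (where the $\chi$-twist accounts for the sign change on the special loops in the definition of $\tau_{\Ka\uQ}$ via Corollary~\ref{cor:DTr}) agree with the $A(j,i)$-module structures declared in the proposition. For the band cases, the appearance of $X'$ and $Y'$ reflects the need to twist by the automorphism $\iota$ of Table~\ref{tab:AwUw} precisely when $[\bx]\simeq[\by^{-1}]$ rather than $[\bx]\simeq[\by]$; this follows from the third bullet of Remark~\ref{rem:isos} applied to the rotation that identifies the two bands. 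Once these matches are checked, summing the four contributions reproduces~\eqref{eqn:E-inv}.
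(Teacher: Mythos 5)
Your proposal is correct and follows essentially the same route as the paper's proof: Corollary~\ref{cor:DTr}, Theorem~\ref{thm:hom} and Proposition~\ref{prp:kisses} (via Proposition~\ref{prp:trip-line}) convert both Hom-terms of the E-invariant into sums over K-triples, which are then stratified by type, with Lemma~\ref{Lem:P=D} handling the $\sfD'$-contributions and the band orientation together with the $\iota$-twist handling the $\sftA$/$\sftD'$-contributions. The one point you pass over silently is that the $\sfD'$-triples coming from the reverse direction $\cK_{\uQ^f}(\tau_f\by,\tau_f\bx)$ naturally contribute $\dim_\Ka\Hom_{A(j,i)}(Y,X^\chi)$ rather than $\dim_\Ka\Hom_{A(j,i)}(X,Y^\chi)$, and equating the two (as the paper does) uses that $A(j,i)\cong\Ka[T]/(T^2-1)$ is commutative semisimple and that $\chi$ is an involution.
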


\begin{proof}
  For typographical reasons we will use the following abbreviations:
  $\tau:=\tau_\uQ$ and $\tbx:=\tau_{\uQ^f}(\bx), \tby:=\tau_{\uQ^f(\by)}$ as
  well as $\cK_f:=\cK_{\uQ^f}$.
  
By combining Corollary~\ref{cor:DTr}, Theorem~\ref{thm:hom} and
Proposition~\ref{prp:kisses} we calculate
\begin{multline*}
  \dim_\Ka\Hom_{\Ka\uQ}(M^o_\bx(X), \tau_{\Ka\uQ}(M^o_\by(Y))) \\
\begin{aligned}
  \qquad  &=\dim_\Ka\Hom_{\Ka\uQ}(M^o_\bx(X), (M^o_{\tau(\by)}(Y^\chi)))
  =\sum_{L'\in\cR_\uQ(\bx,\tau(\by))}\dim_\Ka\Hom_{A_{L'}}(X, Y^\chi)\\[1ex]      
  &=\sum_{[(G,\phi_q,\phi_s)]\in\cK_f(\tbx,\tby)}
    \dim_\Ka\Hom_{A(G,\phi_q,\phi_s)}(X, Y^\chi)\\[1ex]
  & =\abs{\cK_f(\tbx, \tby)_{\sfA}}\times\dim_\Ka \Hom_\Ka (X, Y^\chi)\\
  &\qquad +\hspace*{-2em} \sum_{[(G,\phi_q,\phi_s)]\in\cK_f(\tbx,\tby)_{\sfD'}}
    \hspace*{-3em}\dim\Hom_{A(G,\phi_q,\phi_s)}(X, Y^\chi)
    +\abs{\Diag_b(\bx,\by)}\cdot (\dim_\Ka\Hom_{A_\bx}(X', Y^\chi))\\[1ex]
  & =\abs{\cK_f(\tbx, \tby)_{\sfA}}\times (\dim_\Ka X)\cdot (\dim_\Ka Y)\\
  &\qquad +\sum_{(j,i)\in\sfP^+_\uQ(\bx,\by)}\dim_\Ka\Hom_{A(j,i)}(X, Y^\chi)
    +\abs{\Diag_b(\bx,\by)}\cdot (\dim_\Ka\Hom_{A_\bx}(X', Y^\chi)),\\  
  \end{aligned}
\end{multline*}
where we used in the last step Lemma~\ref{Lem:P=D} together with 
the notation $\sfP^+_\uQ(\bx,\by):= P(\cK_f(\tbx,\tby))$. Similarly, we obtain
\begin{multline*}
    \dim_\Ka\Hom_{\Ka\uQ}(M^o_\by(Y), \tau_{\Ka\uQ}(M^o_\bx(X))) \\
\begin{aligned} \qquad
  & =\abs{\cK_f(\tby, \tbx)_{\sfA}}\times (\dim_\Ka Y)\cdot (\dim_\Ka X)\\
  &\qquad +\sum_{(j,i)\in\sfP^+_\uQ(\by,\bx)}\dim_\Ka\Hom_{A(j,i)}(Y, X^\chi)
    +\abs{\Diag_b(\by,\bx)}\cdot (\dim_\Ka\Hom_{A_\by}(Y', X^\chi)),\\  
  & =\abs{\cK_f(\tby, \tbx)_{\sfA}}\times (\dim_\Ka X)\cdot (\dim_\Ka Y)\\
  &\qquad +\sum_{(j,i)\in\sfP^-_\uQ(\bx,\by)}\dim_\Ka\Hom_{A(j,i)}(X, Y^\chi)  
    +\abs{\Diag_b(\bx,\by)}\cdot (\dim_\Ka\Hom_{A_\bx}(Y', X^\chi)),\\
  \end{aligned}  
\end{multline*}
where we used for the second equation that $A(j,i)\cong \Ka[T]/(T^2-1)$ is
a commutative semisimple algebra, and thus
$\Hom_{A(i,j)}(Y, X^\chi)\cong\Hom_{A(i,j)}(X^\chi,Y)$, together with the
fact that $\chi$ is an automorphism of order 2.  Moreover, we abbreviated
$\cP^-_\uQ(\bx,\by):= P(\cK_f(\by,\bx))$.  

Summing up the two equations we obtain, in view of the definitions, the
desired formula.
\end{proof}

\subsection{g-vectors} \label{ssec:gvec}
Let $A$ be a finite dimensional $\Ka$-algebra over a field $\Ka$, and
$S_1, S_2, \cdots, S_n$ pairwise non-isomorphic representatives of the
simple $A$-modules.  In view of the formula in~\cite[Prp.~5.3]{AIR14},
(see also~\cite[Sec.~3.2]{CLS15}), we may define
the \emph{g-vector} of a finite-dimensional $A$-module $M$ as
\begin{equation} \label{eq:g-vec}
  \bg_A(M):= (\dim\Hom_A(S_i,\tau_A(M)) -\dim\Hom_A(M, S_i))_{i=1,2,\ldots, n}.
\end{equation}
The g-vector contains relevant information about the minimal projective
presentation of $M$, see~\cite[Sec.~5.1]{AIR14} for more details and
background. 
Note however, that we use the opposite sign convention for our g-vectors.

In our context, $A=\Ka\uQ$ for a skewed-gentle polarized quiver, and $\Ka$
is a field with $\kar(\Ka)\neq 2$.  In particular, $A$ is a skewed-gentle
algebra.
We recall the definition of the Gabriel quiver $\tQ$ of $\Ka\uQ$ from
Section~\ref{ssec:skga}  and set
$\bs_i=\II_{i,+}^{-1}\II_{i,-}$ for all $i\in Q_0$.  
The modules
\[
  S_{(i,\rho)}:=M^o(\bs_i, V_\rho)\quad\text{with}\quad
  (i,\rho)\in \tQ_0 
\]
represent the  isoclasses of the simple $\Ka\uQ$-modules,
and they are pairwise non-isomorphic.
Here, we write for the sake of a uniform notation $V_o:=\Ka$, and 
$V_\pm$ denotes the 1-dimensional $\Ka[T]/(T^2-1)$-module on which $T$ acts by
$\pm 1$. 

Let now $\uQ\subset\uQ^f$ be a fringing of $\uQ$, as in
Section~\ref{ssec:fring}, and write for $\bx\in\Adm(\uQ)$
$\tau_f\bx:=\tau_{\uQ^f}(\bx)$, see Section~\ref{ssec:AR-adm}.

\begin{Def} With the above notation, and $i\in Q_0$, we set 
  \begin{alignat*}{2}
A_i^+(\bx) & :=\{j\in H_0(\tau_f\bx) &\mid G_{\tau_f\bx}(j)&=i
            \text{ and two ordinary arrows start at } j\},\\
A_i^-(\bx) & :=\{j\in H_0(\tau_f\bx)&\mid G_{\tau_f\bx}(j)&=i
            \text{ and two ordinary arrows end at } j\},\\
D_i^+(\bx) & :=\{\eta_j\in H_1^{\mathrm{sp}}(\tau_f\bx)&\ \mid G_{\tau_f\bx}(\eta_j)&=\eps_i
\text{ and } \{\alp\in H_1^{\mathrm{ord}}(\tau_f\bx) \mid
             t(\alp)=t(\eta_j)\}=\emptyset \},\\
D_i^-(\bx) & :=\{\eta_j\in H_1^{\mathrm{sp}}(\tau_f\bx)&\ \mid G_{\tau_f\bx}(\eta_j)&=\eps_i
\text{ and } \{\bet\in H_1^{\mathrm{ord}}(\tau_f\bx) \mid
             s(\bet)=s(\eta_j)\}=\emptyset \}.    
\end{alignat*}
We call the elements of $A_i^\pm(\bx)$ the \emph{proper sources resp.~sinks of
type} $i$ of $H(\tau_f\bx)$.  Whilst the elements of
$D_i^\pm(\bx)\subset H_1^{\spe}(\tau_f\bx)\subset\{\eta_0,\eta_1\}$ are
\emph{the special sources resp.~sinks} of $H(\tau_f\bx)$.
We agree that $D^\pm_i(\bx)=\emptyset$ if $i\in\Qordv$.
\end{Def}

Note, that $A_i^\pm(\bx)$ may be non-empty for any $i\in Q_0$. On the other hand,
$D_i^\pm(\bx)$ can be only non-empty if $i\in\Qspv$. Moreover,
$\sum_{\del\in\{+,-\}}\sum_{i\in Q_0}\abs{D_i^\del(\bx)}$ gives the number of
punctured letters of $\bx$. In particular, this number is at most 2. 
Even if at  $i\in\Qspv$ no ordinary arrow starts, the unique special vertex
of $H(\tau_f\bs_i)$ is a special source of type $i$, but \emph{not} a special
sink, due to the construction of $\tau_f\bs_i$. 

Recall moreover the following conclusions from Example~\ref{expl:TripS}:
\begin{itemize}
\item  For any $i\in Q_0$, all real h-lines of type $\sfA$ in $H_{\uQ}(\bx,\bs_i)$ 
are in fact of type $\sfA_1$. They are in bijection with the sources of $j$ of
$\uH(\bx)$ with $G_\bx(j)=i$.  
\item For $i\in\Qspv$, the real h-lines of type $\sfD'$ 
in $H_\uQ(\bx,\bs_i)$ are in fact all of
type $\sfD'_1$.  They are in bijection with the special vertices $j\in H_0^\spe(\bx)$
with $G_\bx(j)=i$ and where no ordinary arrow terminates. 
\end{itemize}
We will use these observations in the proof of Lemma~\ref{lem:gvec}~(a) and~(c)
below.

\begin{Lem} \label{lem:gvec} With the above notations we have the following:
\begin{itemize}
\item[(a)] $\abs{A_i^+(\bx)}$ is the number of real  h-lines of type $\sfA$
    in $H_\uQ(\bx,\bs_i)$.
\item[(b)] $\abs{A_i^-(\bx)}$ is the number of real  h-lines of type $\sfA$
  in $H_\uQ(\bs_i,\tau_\uQ\bx)$ if $\bx$ is not projective,
  else $A_i^-(\bx)=\emptyset$
  \item[(c)] $\abs{D_i^+(\bx)}$ is the number of real  h-lines of type $\sfD'$
    in $H_\uQ(\bx,\bs_i)$.
\item[(d)] $\abs{D_i^-(\bx)}$ is the number of real  h-lines of type $\sfD'$
  in $H_\uQ(\bs_i, \tau_\uQ\bx)$ if $\bx$ is not projective,
  else $D_i^-(\bx)=\emptyset$. 
  \end{itemize}
\end{Lem}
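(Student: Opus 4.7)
The plan is to prove all four claims in parallel by combining Proposition~\ref{prp:trip-line} with the explicit description in Example~\ref{expl:TripS}, and then transporting the resulting conditions to $\uH(\tau_f\bx)$ via Lemma~\ref{lem:cce}. Since $\uH(\bs_i)$ is of type $\sfA_1$ (when $i\in\Qordv$) or of type $\sfD'_1$ (when $i\in\Qspv$), the H-triples for $(\bx,\bs_i)$ and $(\bs_i,\tau_\uQ\bx)$ that we are interested in have underlying hereditary quiver of type $\sfA_1$ or $\sfD'_1$ and are therefore rigid: they are determined by the image of the unique vertex, respectively the unique special loop, of $\uH(\bs_i)$.

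For part~(a), we would first use Proposition~\ref{prp:trip-line} to identify real h-lines of type~$\sfA$ in $H_\uQ(\bx,\bs_i)$ with the subset of $\cT_\uQ(\bx,\bs_i)$ whose underlying hereditary quiver is of type~$\sfA_1$. Example~\ref{expl:TripS} parametrises such triples by the vertex $j:=\phi_q(1)\in H_0(\bx)$ with $G_\bx(j)=i$ satisfying the local condition imposed by property~(q), together with the non-special-loop requirement when $i\in\Qspv$. Next, Lemma~\ref{lem:cce}(a) transports these property-(q) morphisms into morphisms $\{v\}\to\uH(\tau_f\bx)$ with properties~(q) and~(K), which correspond to non-boundary vertices $j'$ of $\uH(\tau_f\bx)$ with $G_{\tau_f\bx}(j')=i$ satisfying the translated property-(q) condition. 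The final step is to inspect the underlying graphs of Table~\ref{tab:AD-graphs} and verify that, at an interior vertex of $\uH(\tau_f\bx)$, the combination of properties~(q),~(K) and the non-special-loop requirement is equivalent to ``two ordinary arrows start at $j'$'', giving exactly $A_i^+(\bx)$. For part~(b) we apply the dual argument to H-triples of type~$\sfA_1$ for $(\bs_i,\tau_\uQ\bx)$ using Lemma~\ref{lem:cce}(b); the condition that $\bx$ is not projective is precisely what makes $\tau_\uQ\bx$ defined, and when $\bx\in\cP(\uQ)$ a short separate verification---based on the explicit form of the projective strings and on the fact that a cohook attached to a projective string degenerates to a single fringe arrow---yields $A_i^-(\bx)=\emptyset$.

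Parts~(c) and~(d) follow the same template, now for H-triples of type~$\sfD'_1$; by Example~\ref{expl:TripS} these exist only when $i\in\Qspv$ and are encoded by morphisms of windings $\uH(\bs_i)\to\uH(\bx)$ (respectively $\uH(\bs_i)\to\uH(\tau_\uQ\bx)$) sending the unique special loop of $\uH(\bs_i)$ to a special loop $\eta_j$, subject to the accompanying property~(q) or~(s) condition on ordinary arrows incident to~$t(\eta_j)$. Because the cohook construction only ever attaches at an unpunctured boundary letter, the set of special loops of $\uH(\bx)$ coincides with that of $\uH(\tau_f\bx)$ and no new special loops appear, so the translated incidence condition at $\eta_j$ reduces precisely to the definitions of $D_i^+(\bx)$ and $D_i^-(\bx)$. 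For bands one has $\tau_f\bx=\bx$ and every part of the lemma becomes immediate from Example~\ref{expl:TripS}.

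The main obstacle is the boundary bookkeeping in parts~(a) and~(b): one must check that each cohook added by $\tau_f$ promotes the unique incident arrow at the relevant boundary vertex of $\uH(\bx)$ into a genuine second outgoing (respectively incoming) ordinary arrow at the corresponding interior vertex of $\uH(\tau_f\bx)$, that interior sources and sinks of $\uH(\bx)$ transfer unchanged, and that the genuinely new junction vertices introduced by a cohook carry $G_{\tau_f\bx}$-values compatible with the combinatorial definition of $A_i^\pm(\bx)$. This case analysis must be carried out separately for each type $(u,u),(u,p),(p,u),(p,p)$ of $\bx$ (and for bands, where the claim becomes trivial), with particular care in the projective case; it is exactly this case analysis that prevents the lemma from being a one-line consequence of Example~\ref{expl:TripS}.
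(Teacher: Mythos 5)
Your argument follows the paper's proof essentially verbatim: both reduce the count of real h-lines to the parametrization of $\sfA_1$- and $\sfD'_1$-triples in Example~\ref{expl:TripS} via Proposition~\ref{prp:trip-line}, and both transport the resulting (q)/(s)-conditions from $\uH(\bx)$, resp.\ $\uH(\tau_\uQ\bx)$, to the proper/special sources and sinks of $\uH(\tau_f\bx)$ using the fringing (you invoke Lemma~\ref{lem:cce} directly where the paper cites its packaged form, Proposition~\ref{prp:kisses}). Your explicit verification that $A_i^-(\bx)=\emptyset=D_i^-(\bx)$ in the projective case is a detail the paper leaves implicit, but otherwise the two proofs coincide.
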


\begin{proof}
  (a)   By the construction of $\tau_f$ and of $\uH(\tau_f\bx)$,
  the sources $j$ of $\uH(\bx)$ with $G_\bx(j)=i$ 
  correspond precisely to $A_i^+(\bx)$.  
  So our claim follows from the discussion in Example~\ref{expl:TripS}.
  
  (b) By Proposition~\ref{prp:kisses} the real h-lines of type $\sfA_1$ in
  $H_\uQ(\bs_i,\tau\bx)$ are in bijection with the kisses of type $\sfA_1$  in
  $H_{\uQ^f}(\tau_f\bs_i,\tau_f\bx)$.  For $i\in\Qordv$ the quiver
  $H(\tau_f\bs_i)$ has a unique proper source, this source is of type $i$
  (possibly there are two other sources, which are of fringe type). Thus,
  there is a unique morphism of windings from $H(\bs_i)$ to $H(\tau_f\bs_i)$,
  which has properties (q) and (k) from Section~\ref{ssec:triph}. On the
  other hand, $A_i^-(\bx)$ can be identified with the morphisms of winding
  from $H(\bs_i)$ to $H(\tau_f\bx)$ which have properties (s) and (k). Now,
  our claim follows from Proposition~\ref{prp:trip-line}.

  (c)  It is easy to see that the real h-lines of type $\sfD'$ in
  $H(\tau_f\bx,\bs_i)$ can be identified with the real h-lines of type
  $\sfD'$ in
  $H(\bx,\bs_i)$.  Thus, our claim follows from the discussion at the end of
  Example~\ref{expl:TripS}. 

  (d) By Proposition~\ref{prp:kisses} the real h-lines of type $\sfA_1$ in
  $H_\uQ(\bs_i,\tau\bx)$ are in bijection with the kisses of type $\sfA_1$  in
  $H_{\uQ^f}(\tau_f\bs_i,\tau_f\bx)$.  For $i\in\Qspv$ the quiver
  $H(\tau_f\bs_i)$ has a unique (special) loop, say at $1$ and a
  unique ordinary arrow starts at $1$. In view of the discussion at the end of
  Example~\ref{expl:TripS}, we can now conclude as in~(b).
\end{proof} 

\begin{Prop} \label{prp:gvec} 
Let $\uQ$ be a skewed-gentle, polarized quiver with fringing
$\uQ^f$ and $\Ka\uQ$ the corresponding skewed-gentle algebra over a field
$\Ka$ with $\kar(\Ka)\neq 2$.  For $\bx\in\Adm(\uQ)$ and $X\in A_\bx\lmd$
we have
\begin{align} \label{eqn:g-vect}
  \bg_{\Ka\uQ}(M^o(\bx,X))_{(i,\rho)}= 
&(\abs{A_i^-(\bx)}-\abs{A_i^+(\bx)})\times \dim X\\      
& \sum_{\eta_j\in D^-_i}\dim\Hom_{A(\eta_j)}(V_\rho, X^\chi) 
-\sum_{\eta_j\in D^+_i}\dim\Hom_{A(\eta_j)}(X, V_\rho),
\end{align}
where $A(\eta_j)= \Ka[T]/(T^2-1)$, and we have
an algebra homomorphism $A(\eta_j)\ra A_{\tau_f\bx}, T\mapsto U_{\tau_f\bx}(\eta_j)$.
This gives $X\in A_{\tau_f\bx}\lmd$, via restriction, an $A(\eta)$-module
structure.
\end{Prop}

\begin{proof} 
In view of Formula~\eqref{eq:g-vec} and Theorem~\ref{thm:hom}, this follows
indeed from  Lemma~\ref{lem:gvec}. 
\end{proof}

\begin{Expl} Consider the skewed-gentle polarized quiver $\uQ$ from
  Example~\ref{ssec:ExSt} with the first fringing $\uQ^f$ from
  Section~\ref{ssec:fring}. We calculate the g-vectors
  \[
    \bg(M)= (\bg_{1,o}, \bg_{2,-},\bg_{2,+},\bg_{3,o})\in\ZZ^4
  \]  
  of some indecomposable
  finite dimensional $\Ka\uQ$-modules $M$ with $\bx\in\Adm(\uQ)$.
  \begin{itemize}
  \item  
    $\bv=\II_{1,-}^{-1}\gam\bet\eps^*\bet^{-1}\II_{3,+}\in\St(\uQ)$  In
  this case $\bx:=A(\bw)=\II_{1,-}^{-1}\gam\bet\eps\bet^{-1}\II_{3,+}\in\Adm(\uQ)$
  and
$\tau_f\bx=\II_{6,-}^{-1}\phi_{6,1}\gam\bet\eps\bet^{-1}\gam^{-1}\phi_{1,5}\II_{5,-}$.
\[\uH(\tau_f\bx)\quad =\quad\vcenter{\xymatrix @-1.1pc{
    &&&\ar_{\bet}[ld] \mathbf{2}&\ar_{\eps}[l] \mathbf{2}\ar^\bet[rd]\\
    &&\ar_{\gam}[ld]\textbf{3} &&& \textbf{3}\ar^\gam[rd]&&\ar@{.>}^{\phi_{51}}[ld] 5\\
      &\ar@{.>}_{\phi_{61}}[ld]\textbf{1}&&&&&1\\
      6}}
\]
In the above display we label vertices and arrows by their image under
$G_{\tau_f\bx}$.  The \textbf{bold} vertices support the subquiver $\uH(\bx)$.
Since $\bx$ is of type $(u,u)$ (and correspondingly $\bw$ an asymmetric string)
$A_\bx=\Ka$ and thus $X=\Ka$ is the only choice. 
We see that we have one proper source of type~2 and one proper sink of  type~1.
Thus  $\bg(M^o(\bx,\Ka))=(-1,1,1,0)$, according to Proposition~\ref{prp:gvec}.
\item
  $\bw= \II_{1,-}^{-1}\alp^{-1}\eps^*\alp\II_{1,-}\in\St(\uQ)$ is a symmetric
  string and correspondingly $\by:=A(\bw)=\II_{2,-}^{-1}\alp\II_{1,-}\in\Adm(\uQ)$
  is of type $(p,u)$.  Consequently, $\tau_f\by=\alp\phi_{6,1}^{-1}\II_{6,-}$ and
  \[
    \uH(\tau_f\by)\quad = \quad\vcenter{\xymatrix{
        &\ar_{\alp}[ld]\textbf{1}\ar@{.>}^{\phi_{6,1}}[rd]\\
        \ar@(ul,dl)^\eps[] \mathbf{2}&& 6}}.
  \]
  In this case we have a proper source of type 1 and a special sink of type 2.
  Thus, we get from our Corollary
  \[
    \bg(M^o(\by, X^+))= (1,-1,0,0)\quad\text{and}\quad
    \bg(M^o(\by, X^-))= (1,0,-1,0),
\]
since  $A_\by=\Ka[T]/(T^2-1)$ and $X^\pm\cong V^\pm$ are the two isomorphism
classes of indecomposable (simple) $A_\by$-modules.
\end{itemize}
\end{Expl}

\section{Indecomposable  generically
  \texorpdfstring{$\tau$}{τ}-regular irreducible
  components} \label{sec:gentred}
\subsection{Generically \texorpdfstring{$\tau$}{τ}-regular
  irreducible components for tame algebras} \label{ssec:tau-reg}
Let $\Ka$ be an algebraically closed field, $Q$ a finite quiver and $I<\Ka Q$
an admissible ideal. Then $A:=\Ka Q$ is a finite dimensional
basic $\Ka$-algebra with Grothendieck group $K_0(A)\cong\ZZ^{Q_0}$.  In this
context, we consider for each dimension vector $\bd\in\NN^{Q_0}$ the affine
$\Ka$-scheme $\rep_A^\bd$ of representations with dimension vector $\bd$.
The algebraic group $G_\bd:=\times_{i\in Q_0}\GL_{\bd(i)}(\Ka)$ acts on the
$\Ka$-rational points $\rep_A^\bd(\Ka)$ by conjugation, and the orbits are
in one to one correspondence with the isoclasses of representations of
$A$ with dimension vector $\bd$.  For $M\in\rep_A^\bd(\Ka)$ we write
$\cO_M\subset\rep_A^\bd $ for the $G_\bd$-orbit $G_\bd\dot M$.   

We denote  by $\Irr(A,\bd)$ the set of irreducible components of $\rep_A^\bd$
and
\[
\Irr(A):=\cup_{\bd\in\NN^{Q_0}}\Irr(A,\bd).  
\]
A component $Z\in\Irr(A)$ is
called \emph{indecomposable} if it contains a dense subset of indecomposable
representations of $A$.  It is well known, that on each $Z\in\Irr(A)$ the
function
$Z\ra\NN, M \mapsto\codim_Z(\cO_M)$
is upper semicontinuous.  Thus,
\[
  c(Z):=\min\{\codim_Z(\cO_M)\mid M\in Z\}
\]
is the generic value of the codimension of the orbits in $Z$.

For $X, Y\in\Irr(A)$ the function
\[
  E_A\df X\times Y\ra\NN, (M,N)\mapsto\dim\Hom_A(M,\tau N)+\dim\Hom_A(N,\tau M)
\]  
is upper semicontinuous by~\cite[Cor.~1.4]{GLFS23}, and thus
\[
  e_A(X, Y):= \min\{E_A(M,N)\mid (M,N)\in X\times Y)\}
\]  
is the generic value of $E_A$ on $X\times Y$.
It will be convenient to consider also the generic value
\[
  e_A(Z):=\min\{\frac{1}{2} E_A(M,M)\mid M\in Z\}\in\NN
\]
We write
\[
  \tIrr(A):=\{Z\in\Irr(A)\mid c(Z)=e_A(Z)\}
\]
for the set of \emph{generically} $\tau$-\emph{regular} irreducible components,
and $\tIrrind(A)$ is the subset of generically $\tau$-regular
irreducible components which are indecomposable. These components were called
$\tau$-reduced or strongly reduced in some older publications.  In a recent preprint~\cite{BobS25}, Bobiński and Schröer make a very good point, why these components should be better called $\tau$-\emph{regular} components.

With the help of the generic
E-invariant $e_A$ it is possible to write each $Z\in\tIrr(A)$ in an
essentially unique way as a direct sum of \emph{indecomposable}, generically
$\tau$-regular irreducible components. See~\cite[Sec.~5.2]{CLS15}
for more details.  The indecomposable, generically $\tau$-regular irreducible
components of a \emph{tame} algebra have a convenient characterization,
which is kind of folklore.  
In fact, part~(a) of the following Proposition is~\cite[Corr.~1.7]{GLFS23}, 
see also the comment in \emph{loc.~\!cit.} after Corollary~1.7.  
Part~(b) of the Proposition follows easily from the definitions. 
Part~(c) and~(d) follow from the discussion in~\cite[Sec.~3]{GLFS22}.

\begin{Prop} \label{prop:E-tame}
  Suppose that $A=\Ka Q/I$ as above is tame.  Then the following  holds:
  \begin{itemize}
    \item[(a)] For each $Z\in\tIrr(A)$ we have $e_A(Z,Z)=0$.
    \item[(b)] If $M\in A\lmd$ is indecomposable with $E_A(M,M)=0$, then
      the (Zariski) orbit closure $Z:=\overline{\cO_M}$ belongs to
      $\tIrrind(A)$, and $c(Z)=0$. 
    \item[(c)] If $C\subset\rep_A^\bd(\Ka)$ is an irreducible curve which
      intersects infinitely many $G_\bd$-orbits of bricks, then
      \[ Z:=\overline{\cup_{X\in C} \cO_X}\in\tIrrind(A)\]
      and $c(Z)=1$.
    \item[(d)]  Let $Z\in\tIrrind(A)$, then we have $e_A(Z)\in\{0, 1\}$.
      If $e_A(Z)=0$ then $Z$ is of the form described in $(b)$, else $Z$ is
      of the form described in (c).
    \end{itemize}   
\end{Prop}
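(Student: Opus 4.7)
The plan is to deduce each of (a)--(d) from the two results cited in the statement, namely \cite[Cor.~1.4, Thm.~1.5]{GLFS23} and \cite[Sec.~3]{GLFS22}, combined with the classical tools of orbit varieties: Voigt's inequality $\codim_{\rep_A^\bd}(\cO_M)\le\dim\operatorname{Ext}^1_A(M,M)$, the Auslander--Reiten formula $\dim\operatorname{Ext}^1_A(M,N)\le \dim\Hom_A(N,\tau M)$, and upper semicontinuity of $X\mapsto\dim\operatorname{End}_A(X)$.

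For part (a), I would simply appeal to \cite[Thm.~1.5]{GLFS23}: that theorem states precisely that tameness forces $e_A(Z,Z)=0$ for every $Z\in\tIrr(A)$. The content is that along a $\tau$-reduced component, the generic codimension $c(Z)$ agrees with $e_A(Z)$, and for a tame algebra the obstructions measured by $E_A$ are visible already on the diagonal $\{(M,M)\}$. For part (b), let $M$ be indecomposable with $E_A(M,M)=0$; the Auslander--Reiten formula gives $\dim\operatorname{Ext}^1_A(M,M)\le E_A(M,M)=0$, so $M$ is rigid, and by Voigt's inequality $\cO_M$ is open in $\rep_A^{\dim M}$. Hence $Z:=\overline{\cO_M}$ is an irreducible component with $c(Z)=0$. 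Upper semicontinuity of $E_A$ on $Z\times Z$ \cite[Cor.~1.4]{GLFS23} then forces $e_A(Z,Z)=0$, so $Z\in\tIrr(A)$, and since $\cO_M$ is dense in $Z$ and consists of indecomposables, $Z\in\tIrrind(A)$.

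For part (c), I would follow the argument of~\cite[Sec.~3]{GLFS22}: since the locus of bricks in $\rep_A^\bd$ is open and $C$ meets infinitely many brick orbits, a dense open subset $C^\circ\subset C$ consists of pairwise non-isomorphic bricks. For $X\in C^\circ$ the orbit $\cO_X$ has codimension $\dim\operatorname{Ext}^1_A(X,X)$ in $\rep_A^\bd$, and the tameness of $A$ combined with the fact that $C^\circ$ supplies a 1-parameter family of non-isomorphic bricks forces this codimension to drop to $1$ generically. Consequently $Z:=\overline{\bigcup_{X\in C}\cO_X}$ has $\dim Z=\dim\cO_X+1$ on an open subset, so $Z$ is an irreducible component with $c(Z)=1$; a brick has $E_A(X,X)\le 2\dim\operatorname{Ext}^1_A(X,X)=2$ and in fact, by the Auslander--Reiten formula and the tame structure, $E_A(X,X)=2$ for generic $X\in C^\circ$, giving $e_A(Z)=1=c(Z)$, whence $Z\in\tIrrind(A)$.

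Finally, for part (d), this is the nontrivial converse direction of \cite[Thm.~1.5]{GLFS23}. One uses the generic decomposition of $Z$ together with Crawley-Boevey's structural theorem for tame algebras: along any indecomposable generically $\tau$-reduced component, the generic module is either rigid (yielding $e(Z)=0$ and case (b) via the orbit-closure construction) or belongs to a one-parameter family of bricks parametrized by a rational curve (yielding $e(Z)=1$ and case (c)). The main obstacle is precisely this last step: it ultimately rests on the deep fact that for tame $A$, an indecomposable irreducible component admits at most a one-dimensional family of bricks through its generic point. This is the heart of the tame/wild dichotomy as reformulated in \cite{GLFS23}, and rather than reprove it I would simply cite the theorem and check that its output matches the two descriptions in (b) and (c).
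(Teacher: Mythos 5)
The paper does not actually prove this proposition: it is quoted as a known characterization, with the entire content delegated to \cite[Sec.~3]{GLFS22} and \cite[Thm.~1.5]{GLFS23}. Your proposal does the same for the hard parts (a) and (d), so in substance you take the paper's route; your elaboration of (b) is correct and, as you implicitly note, does not even use tameness (rigidity of $M$ follows from $\Hom_A(M,\tau M)=0$ via the Auslander--Reiten formula, Voigt then gives an open orbit, hence $c(Z)=0=e_A(Z)$ and indecomposability is generic on $Z=\overline{\cO_M}$).

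Two points in your sketch of (c) deserve correction, even though you ultimately defer to the cited references. First, the inequality ``$E_A(X,X)\le 2\dim\operatorname{Ext}^1_A(X,X)$'' goes the wrong way: the AR formula gives $\dim\operatorname{Ext}^1_A(X,X)=\dim\overline{\Hom}_A(X,\tau X)\le\dim\Hom_A(X,\tau X)$, so one only gets $E_A(X,X)\ge 2\dim\operatorname{Ext}^1_A(X,X)$ for free. The correct way to see $E_A(X,X)=2$ for generic $X$ on the curve is via tameness in the form of Crawley--Boevey's homogeneity theorem: for a fixed dimension vector all but finitely many indecomposables satisfy $\tau_A X\cong X$, so for a generic brick $X\in C$ one has $\Hom_A(X,\tau X)\cong\End_A(X)=\Ka$, whence $e_A(Z)=1$. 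Second, the dimension count $\dim Z=\dim\cO_X+1$ shows that $Z$ strictly contains orbit closures, but by itself it does not show that $Z$ is an irreducible \emph{component} of $\rep_A^{\bd}(\Ka)$ (maximality among irreducible closed subsets needs a separate argument, e.g.\ comparing $\dim\operatorname{Ext}^1_A(X,X)=1$ for the generic brick with the codimension of $\cO_X$ in any component containing it); this maximality is part of what \cite[Sec.~3]{GLFS22} and \cite[Thm.~1.5]{GLFS23} supply, so if you lean on them, as the paper does, the gap is closed by citation rather than by your dimension argument.
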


Recall, that over an algebraically closed field $\Ka$, bricks are precisely
the $A$-modules $M$ with $\End_A(M)=\Ka$

\subsection{Combinatorial description}
In this section we fix a skewed-gentle polarized quiver $\uQ$ with fringing
$\uQ^f$  and $\Ka$, an algebraically closed field with
$\kar{\Ka}\neq 2$. We identify the skewed-gentle algebra $\Ka\uQ$ along the
isomorphism $\phi_Q$ with $\Ka\tQ/\tcR$ as in Section~\ref{ssec:skga}.
In particular, $\tQ$ is the Gabriel quiver of $\Ka\uQ$, and we identify the dimension vectors of representations of $\Ka\uQ$
with vectors in $\NN^{\tQ_0}$.

For each $\bx\in\Adm(\uQ)$ we define a subset of
$S:=(\{-1,1\}\times\{-1,1\})\cup\{\ast, (\ast,\ast)\}$: 
\[
S(\bx):=\begin{cases}
\{(1,1)\}                        &\text{if } \type(\bx)=(u,u),\\
\{(1,-1), (1,1)\}                &\text{if } \type(\bx)=(u,p),\\
\{(-1,1), (1,1)\}                &\text{if } \type(\bx)=(p,u),\\
\{1,-1\}^2\cup\{(\ast,\ast)\}    &\text{if } \type(\bx)=(p,p),\\
\{\ast\}                &\text{if } \type(\bx)=(b).
\end{cases}
\]
The set $S(\bx)$ symbolically parametrizes in a convenient way the simple
$A_\bx$-modules.
For later use we introduce several operations  on the elements of $S$, namely
we set $(s_0,s_1) := s$  for $s\in S\setminus\{*\}$ and  
\begin{alignat*}{2}
  s^\iota &:=\begin{cases} (s_1,s_0)&\text{if } s=(s_0,s_1)\in\{-1,1\}^2,\\ s &\text{else,}
  \end{cases} &
   \qquad\wt(s) &:=\begin{cases} 2 &\text{if } s=(*,*),\\ 1 &\text{if } s\neq (*,*),\end{cases}\\   \\
   s^\chi &:=\begin{cases} (-s_0,-s_1)&\text{if } s=(s_0,s_1)\in\{-1,1\}^2,\\ \ s&\text{else.}
   \end{cases} &
 \end{alignat*}
With this at hand we introduce, with the notation from
Definition~\ref{rem:Adm}, the (equivalence classes of) \emph{tagged
  admissible words}
\begin{align*}
  \Adm^*(\uQ)   &:=\{(\bx,s)\mid \bx\in\Adm(\uQ)\text{ and } s\in S(\bx)\},
  \text{ and}\\
  [\Adm^*(\uQ)] &:=\{(\bx,s)\in\Adm(\uQ)\mid\bx\in\AdmSt(\uQ)\} \cup
                  [\AdmBa(\uQ)]\times\{*\}.
\end{align*}
We introduce on $[\Adm^*(\uQ)]$ an equivalence relation $\simeq$ by
declaring $(\bx^{-1},s^\iota)\simeq (\bx,s)$.
Next we introduce for each $(\bx,s)\in\Adm^*(\uQ)$ a set of simple
$A_\bx$-modules as follows:
\[
C(\bx,s):=\begin{cases}
\{V_o\}    &\text{if } \type(\bx)=(u,u),\\
\{V_\del\}  &\text{if } \type(\bx)=(u,p) \text{ and } s=(1,\del 1),\\
\{V_\del\}  &\text{if } \type(\bx)=(p,u) \text{ and } s=(\del 1,1),\\
\{W_{(1;i\cdot j)}^{\chi^{(1-i)/2}}\} &\text{if } \type(\bx)=(p,p)\text{ and } s=(i,j)\in\{1,-1\}^2,\\
\{\widetilde{V}_{(1,t)}\mid t\in\Ka^*\setminus\{-1,1\}\}&\text{if }\type(\bx)=(p,p)
  \text{ and } s=(\ast,\ast),\\
  \{V_{(1,t)}\mid t\in\Ka^*\} &\text{if } \type(\bx)=(b),
\end{cases}
\]
where we used the notations from the beginning of Section~\ref{ssec:gvec}
in case $\bx$ has type $(u,u)$, $(u,p)$ or $(p,u)$, and we use the notation introduced in Appendix~\ref{ssec:infdih} when $\bx$ has type $(p,p)$ or $(b)$. In particular,  for
$R:=\Ka\ebrace{S,T}/(S^2-1, T^2-1)$ and the $1$-dimensional $R$-module
$W_{(1;i\cdot j)}^{\chi^{(1-i)/2}}$ we have
$W_{(1;i\cdot j)}^{\chi^{(1-i)/2}}(S)=i$ and
$W_{(1;i\cdot j)}^{\chi^{(1-i)/2}}(T)=j$.

Note that for all $\bx\in\Adm(\uQ)$ the set
$\cup_{s\in S(\bx)} C(\bx,s)$ covers all isomorphism classes of simple
  $A_\bx$-modules. 
Note moreover that we may see $C(\bx,s)$ as an affine curve
when $s\in\{*, (*,*)\}$.

We observe that for $(\bx,s)\in\Adm_*(\uQ)$ there exists 
$\bd(\bx,s)\in\ZZ_{\geq 0}^{\tQ_0}$ such that
\[
\bd(\bx,s)=\dimv M^o_\bx(X) \text{ for all } X\in C(\bx,s).
\]
\begin{Def} \label{def:comb-inv}
  With the notations  from Section~\ref{ssec:Einv} and
  from Section~\ref{ssec:gvec} we define for
  $(\bx,s), (\by,t)\in\Adm^*(\uQ)$ and $(i,\rho)\in\tQ_0$   the
  \emph{combinatorial E-invariant} and \emph{combinatorial g-vector}:
\begin{align}
  \begin{split} \label{eq:comb-e}
  e_\uQ((\bx,s), (\by,t)) &= \abs{\sfA_{\uQ^f}(\bx,\by)}\times \wt(s)\times\wt(t)\; \\
&\qquad +  \sum_{(j,i)\in \sfP_\uQ(\bx,\by)} \!\!\!d_2(s_i,(t^\chi)_j) 
\;+\; 2\abs{\Diag_b(\bx,\by)} \times d_3(s',t^\chi)\quad\in\NN,
\end{split}\\[1ex]
\begin{split} \label{eq:comb-g} 
  \bg_\uQ(\bx,s)_{(i,\rho)}&= (\abs{A_i^-(\bx)}-\abs{A_i^+(\bx)})\times \wt(s)\\
  &\qquad
  +\sum_{\eta_j\in D^-_i(\bx)} d_2(\rho\cdot 1, -s_j)
  -\sum_{\eta_j\in D^+_i(\bx)} d_2(s_j,\rho\cdot 1)\ \in\ZZ,
\end{split}
\end{align}    
where we use additionally the symmetric functions
$d_2\df\{-1,1,*\}^2\ra\{0,1,2\}$, $d_3\df S^2\ra\{0,1\}$, which are
defined as follows:
\[
\begin{array}{r|r|r|r|}
  d_2 & -1 & 1 & *\\ \hline
   -1 &  1 & 0 & 1\\ \hline
    1 &  0 & 1 & 1\\ \hline
    * &  1 & 1 & 2\\ \hline
\end{array}\qquad
d_3(s,t):=\begin{cases}
1 &\text{if } s=t\in\{-1,1\}^2,\\ 0 &\text{else.}
\end{cases}
\]
Thus, for example, $d_2(*,-1)=1=d_2(-1,*)$.

Moreover, we agree that $s':=(s_1,s_0)$ if $\Diag_b(\bx,\by)=-1$ and 
$s\in\{-1,1\}^2$. In all other cases  $s'=s$. If $s=(*)$ we agree that $s_0=*=s_1$.  In this case 
$\bx$ is of type $(b)$ and thus $\sfP_\uQ(\bx,\by)=\emptyset$. 
Note, that in this case the second summand  in Equation~\eqref{eq:comb-e}, and the second summand in Equation~\eqref{eq:comb-g} are both the empty sum.  
The same applies for $t=(*)$. We also agree that, $\pm\cdot 1:=\pm 1$ and $o\cdot 1:=*$, but note that  $\rho=*$ implies $i\in\Qordv$ and thus $D^\pm_i(\bx)=\emptyset$. 

\medskip

We will need also the crucial subset
\begin{equation} \label{deqn:admts}
\Adm_\tau^*(\uQ):=\{(\bx,s)\in\Adm^*(\uQ)\mid e_\uQ((\bx,s),(\bx,s))=0\}
\end{equation}
of $\tau$-\emph{generic} pairs in $\Adm^*(\uQ)$.
\medskip

We also consider for $(\bx,s)\in\Adm^*(\uQ)$ the Zariski closure
\begin{equation} \label{deqn:Mclos}
\overline{M}^o_{(\bx,s)}:= \overline{\cup_{X\in C(\bx,s)} \cO_{M^o_\bx(X)}}
\subset\rep_{\Ka\uQ}^{\bd(\bx,s)}(\Ka).
\end{equation}
\end{Def}

\begin{Rem} \label{rem:e-comb}
(1) For each $(\bx,s)\in\Adm^*(\uQ)$ the closed set
  $\widebar{M}^o_{(\bx,s)}\subset\rep_{\Ka\uQ}^{\bd(\bx,s)}(\Ka)$ contains,
  by Crawley-Boevey's Theorem~\ref{Thm:classif} the union of orbits 
  \[
  \cup_{V\in C(\bx,s)} \cO_{M^o_\bx(V)}
  \]
  as an irreducible, dense subset of \emph{indecomposable} representations.
  It consists of infinitely many orbits if and only if $s\in\{*,(*,*)\}$.
  In particular, each $\widebar{M}^o_{(\bx,s)}$ is irreducible.
  Moreover, we have by the same theorem
  \[
  \widebar{M}^o_{(\bx,s)}=\widebar{M}^o_{(\by,t)} \Iff [(\bx,s)]=[(\by,t)]. 
  \]
  
(2)  The functions $e_\uQ$ resp.~$\bg_\uQ$ descend obviously to
 $([\Adm^*(\uQ)]/\simeq) \times ([\Adm^*(\uQ)]/\simeq)$ 
  resp.~to
  $[\Adm^*(\uQ)]/\simeq$.  In particular,
  the subset $[\Adm^*_\tau(\uQ)]/\simeq$ of $[\Adm^*(\uQ)]/\simeq$
  is well-defined.

(3) Recall from Definition~\ref{def:E-invar}, that $\abs{\sfA_{\uQ^f}(\bx,\by)}$
is the number of equivalence classes of K-triples of type $\sfA$ from $\tau_f\bx$ to $\tau_f\by$ 
plus the number of equivalence classes of K-triples of type $\sfA$ from $\tau_f\by$ to $\tau_f\bx$.  
\end{Rem}

\begin{Thm} \label{thm:tirrind}
Let $\uQ$ be a skewed-gentle polarized quiver and $\Ka$ an
algebraically closed field with $\kar(\Ka)\neq 2$.
\begin{itemize}
\item[(a)]  We have the following equalities:
  \begin{align*}
  e_{\Ka\uQ}(\widebar{M}^o_{(\bx,s)}, \widebar{M}_{(\by,t)})&=e_\uQ((\bx,s),(\by,t)) 
\text{ for all } (\bx,s), (\by,t)\in\Adm^*(\uQ),\\    
  e_{\Ka\uQ}(\widebar{M}^o_{(\bx,s)})&=\begin{cases}
  \frac{1}{2} e_\uQ((\bx,s), (\bx,s))   &\text{if } s\in\{-1,1\}^2\\
 \frac{1}{2} e_\uQ((\bx,s), (\bx,s))+1 &\text{if } s\in\{*, (*,*),\}  
  \end{cases}  \quad\text{and}\\
  \bg_{\Ka\uQ}(\widebar{M}^o_{(\bx,s)}) &=\bg_\uQ(\bx,s)
  \quad\text{ for each } (\bx,s)\in\Adm^*(\uQ).
\end{align*}
In other words, the combinatorial E-invariant and combinatorial g-vector
calculate the generic values of the (algebraic) E-invariant and 
g-vector on the closed subsets of the form $\widebar{M}^o_{(?)}$.  

\item[(b)] The set $\Adm^*_\tau(\uQ)$ has the following simplified description:
\begin{align*}  
\Adm^*_\tau(\uQ) &=\{(\bx,s)\in\Adm^*(\uQ)\mid \bx\in\Adm_\tau(\uQ)\text{ and } s\in S'(\bx)\},
\quad\text{where}\\
\Adm_\tau(\uQ) &:=\{\bx\in\Adm(\uQ)\mid\sfA_{\uQ^f}(\bx,\bx)=\emptyset\},\\ 
S'(\bx) &:= \begin{cases}  \{(-1,-1), (1,1)\} &\text{if }
  \bx=\II^{-1}_{p,-}\bx'\II_{p,-} \text{ for some } p\in\Qspv,\\
  S(\bx) &\text{else.}
\end{cases}
\end{align*}
\item[(c)]
  The construction $\widebar{M}^o_{?}$ induces a bijection
  \[
  ([\Adm^*_\tau(\uQ)]/\simeq)\ \ra\ \tIrrind(\Ka\uQ),\quad
    [(\bx,s)]\mapsto \widebar{M}^o_{(\bx,s)}.
  \]
\end{itemize}
\end{Thm}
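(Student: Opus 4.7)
My plan is to prove (a) first, then use it in (b), and finally deduce (c) by matching the parametrization $(\bx,s)\mapsto\widebar{M}^o_{(\bx,s)}$ against the classification of $\tIrrind$ for tame algebras from Proposition~\ref{prop:E-tame}, together with Crawley--Boevey's Theorem~\ref{Thm:classif}.

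For (a), I specialize Proposition~\ref{E-inv} and the g-vector formula of Section~\ref{ssec:gvec} to $X\in C(\bx,s)$, $Y\in C(\by,t)$ and check termwise agreement with $e_\uQ$ and $\bg_\uQ$. The identity $\dim_\Ka X=\wt(s)$ handles the $|\sfA_{\uQ^f}|$-summand. For each $(j,i)\in\sfP_\uQ(\bx,\by)$, the algebra $A(j,i)\cong\Ka[T]/(T^2-1)$ acts on $X$ (resp.\ on $Y^\chi$) with eigenvalue $s_i$ (resp.\ $(t^\chi)_j$): when both eigenvalues lie in $\{-1,1\}$ the hom-space is one-dimensional iff they coincide, matching the $d_2$-table, while when either tag is $*$ the two-dimensional module $\widetilde{V}_{(1,\cdot)}$ from Appendix~\ref{ssec:infdih} restricts to $V_+\oplus V_-$ over $A(j,i)$, recovering the $d_2$-values $1$ and $2$. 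The band-diagonal term $\dim\Hom_{A_\bx}(X',Y^\chi)+\dim\Hom_{A_\bx}(Y',X^\chi)$ equals $2\,d_3(s',t^\chi)$ generically by the analogous eigenvalue matching, coupled with the involutivity of $\chi$. Upper semicontinuity of $E_A$ (\cite{GLFS23}) on the irreducible product $\widebar{M}^o_{(\bx,s)}\times\widebar{M}^o_{(\by,t)}$ then propagates these generic equalities from the dense union of orbits $\cO_{M^o_\bx(X)}\times\cO_{M^o_\by(Y)}$ to the generic point. The g-vector claim follows analogously using the formula of Section~\ref{ssec:gvec}.

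For (b), I case-analyze $\bx$. In all cases other than $\bx=\II^{-1}_{p,-}\bx'\II_{p,-}$ with $p\in\Qspv$, the set $\sfP_\uQ(\bx,\bx)$ is empty (either no matching-loop pairs exist, or the diagonal $\sfP'_\uQ(\bx,\bx)=\{(0,0),(1,1)\}$ removes them), while the $d_3$-contribution vanishes (one has $d_3(*,*)=0$ for bands and $d_3(s,s^\chi)=0$ for $s\in\{-1,1\}^2$ since $s=s^\chi$ is impossible). Hence $e_\uQ((\bx,s),(\bx,s))=|\sfA_{\uQ^f}(\bx,\bx)|\,\wt(s)^2$, and the vanishing is exactly $\bx\in\Adm_\tau(\uQ)$, giving $S'(\bx)=S(\bx)$. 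In the remaining case $\sfP_\uQ(\bx,\bx)=\{(0,1),(1,0)\}$ contributes a sum of two $d_2$-values on cross-eigenvalues; the $d_2$-table shows this sum equals $0$ iff the two coordinates of $s$ agree in $\{-1,1\}$, and is at least $2$ on $s\in\{(1,-1),(-1,1),(*,*)\}$; combined with the $\sfA$-term, this yields exactly $S'(\bx)=\{(1,1),(-1,-1)\}$. For (c), well-definedness on $[\Adm^*_\tau(\uQ)]/\simeq$ follows from (a) and Proposition~\ref{prop:E-tame}(b,c): $e_\uQ((\bx,s),(\bx,s))=0$ gives $e_{\Ka\uQ}(\widebar{M}^o_{(\bx,s)})=0$ in the orbit-closure case and $=1$ in the family case, so $\widebar{M}^o_{(\bx,s)}\in\tIrrind(\Ka\uQ)$, while the isomorphism $M^o_\bx(X)\cong M^o_{\bx^{-1}}(X^\iota)$ of Remark~\ref{rem:isos} ensures compatibility with $\simeq$. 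Injectivity is the uniqueness in Theorem~\ref{Thm:classif}. For surjectivity, take $Z\in\tIrrind(\Ka\uQ)$: since $\Ka\uQ$ is tame (being skewed-gentle), Proposition~\ref{prop:E-tame}(d) presents $Z$ either as $\overline{\cO_M}$ with $M$ indecomposable and $E(M,M)=0$, or as the closure of a $1$-parameter curve of bricks. By Theorem~\ref{Thm:classif}, $M$ (resp.\ each brick in the family) is of the form $M^o_\bx(X)$ for a unique $[(\bx,X)]\in[\MAdm(\uQ)]/\simeq$; repackaging $X$ as a simple (for $c(Z)=0$) or a $1$-parameter curve of simples of $A_\bx$ (for $c(Z)=1$) yields a unique tag $s\in S(\bx)$ with $Z=\widebar{M}^o_{(\bx,s)}$, and $(\bx,s)\in\Adm^*_\tau(\uQ)$ by part (a).

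The main obstacle is the fine tracking of eigenvalues, $\chi$-twists, and restrictions to the various $\Ka[T]/(T^2-1)$-subalgebras of $A_\bx$ needed to identify the generic Hom-dimensions on $C(\bx,s)\times C(\by,t)$ with the entries of the $d_2$- and $d_3$-tables; in particular, one must confirm that the two-dimensional modules $\widetilde{V}_{(1,t)}$ restrict as $V_+\oplus V_-$ over each special-loop subalgebra, and that the $(*,*)$-tagged family on a type-$(p,p)$ string consists generically of bricks, so that case~(c) of Proposition~\ref{prop:E-tame} applies to it.
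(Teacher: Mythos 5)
Your treatment of (a) and (b) follows the same route as the paper (specialize Proposition~\ref{E-inv} and the g-vector formula to the families $C(\bx,s)$, then analyse the $\sfP_\uQ(\bx,\bx)$- and $d_2$-contributions), and these parts are essentially correct. The problems are in (c), at exactly the two places where the paper has to do real work.

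First, for well-definedness you invoke Proposition~\ref{prop:E-tame}(c) for the tags $s\in\{*,(*,*)\}$, but that statement only applies to a curve meeting infinitely many orbits of \emph{bricks}; you yourself list the required brickness of the generic members of $\{M^o_\bx(X)\}_{X\in C(\bx,s)}$ as an unverified ``obstacle''. This is not a routine check to be deferred: the paper proves it by combining Corollary~\ref{cor:DTr} (for these $X$ one has $X^\chi\cong X$, so $\tau_{\Ka\uQ}M^o_\bx(X)\cong M^o_\bx(X)$) with Theorem~\ref{thm:hom} and the vanishing of the $\sfA_{\uQ^f}(\bx,\bx)$- and $\sfP_\uQ(\bx,\bx)$-terms, to get $\dim\End_{\Ka\uQ}(M^o_\bx(X))=\dim\Hom_{A_\bx}(X,X)=1$. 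Without this computation the application of Proposition~\ref{prop:E-tame}(c) is unjustified, so even the forward inclusion $[\Adm^*_\tau(\uQ)]/\simeq\ \to\ \tIrrind(\Ka\uQ)$ is incomplete.

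Second, your surjectivity argument rests on the phrase ``repackaging $X$ as a simple \dots or a $1$-parameter curve of simples of $A_\bx$'', which is precisely the nontrivial point and is nowhere proved. Theorem~\ref{Thm:classif} only tells you that the generic modules of $Z\in\tIrrind(\Ka\uQ)$ are of the form $M^o_\bx(X)$ with $X\in A_\bx\ind$; the sets $C(\bx,s)$, $s\in S(\bx)$, cover only the \emph{simple} $A_\bx$-modules, so you must exclude non-simple $X$ (e.g.\ the modules $\widetilde{V}_{m,s}$, $W_{m,\pm1}$ with $m\ge 2$ for symmetric bands, or $V_{m,\lambda}$ with $m\ge 2$ for asymmetric bands, some of which are perfectly good indecomposables and can even be bricks). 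The paper does this via the dichotomy from Appendix~\ref{ssec:infdih}: for non-simple $X$ either $X\not\cong X^\chi$ with $\Hom_{A_\bx}(X,X^\chi)\neq 0$, or $X\cong X^\chi$ with $\dim\Hom_{A_\bx}(X,X)\ge 2$; in both cases $E_{\Ka\uQ}(M^o_\bx(X),M^o_\bx(X))\neq 0$, which rules out the $c(Z)=0$ case of Proposition~\ref{prop:E-tame}, and then Corollary~\ref{cor:DTr} (case $X\not\cong X^\chi$) resp.\ $\dim\End\ge 2$ (case $X\cong X^\chi$) rules out that such a module occurs generically on a brick curve with $c(Z)=1$. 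Some substitute for this argument (the paper's, or e.g.\ bounding $\End(M^o_\bx(X))\supseteq\End_{A_\bx}(X)$ to force $X$ simple on brick curves) is indispensable; as written, your proof of (c) has a genuine gap at both the injective-image and the surjectivity stage.
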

  
\begin{proof}
(a)  Tracking back the definitions, and by comparing with  the formulae
  for the E-invariant~\eqref{eqn:E-inv} and the  g-vector~\eqref{eqn:g-vect}
  these equalities follow easily.
\medskip
  
(b) When we calculate $e_\uQ((\bx,s), (\bx,s))$according to its definition
in equation~\eqref{eq:comb-e},  we see that it is a sum of non-negative integers.  So $e_\uQ((\bx,s), (\bx,s))=0$ if and only if each of its summands
is $0$. Since $\wt(s)^2\in\{1,4\}$, the first summand is $0$ if and only if
$\abs{\sfA_{\uQ^f}(\bx,\bx)}=0$, or in other words $\bx\in\Adm_\tau(\uQ)$.

Next, we observe that $\abs{\Diag_b(\bx,\bx)}\cdot d_3(s',s^\chi)=0$ for all $(\bx,s)\in\Adm^*(\uQ)$.  In fact, in this situation $s'=s$, and, 
according to Definition~\ref{def:E-invar},
$\abs{\Diag_b(\bx,\bx)}\neq 0$ implies that $\bx$ is of type $(b)$ or of type
$(b,b)$. Now, $d_3((*), (*)^\chi)=0$ and also $d_3((s_0,s_1), (s_0, s_1)^\chi)=
d_3((s_0, s_1), (-s_0, -s_1))=0$ for all $(s_0,s_1)\in\{-1,1\}$. 

From  Definition~\ref{def:E-invar} we see that
$\sfP_\uQ(\bx,\bx)=\{(0,1), (1,0)\}$ if
$\bx=\II^{-1}_{p,-}\bx'\II^{-1}_{p,1}$ for some $p\in\Qspv$.  
Else $\sfP_\uQ(\bx,\bx)=\emptyset$. Thus, we see from the definition of $d_2$, that in the first case it is necessary and sufficient to have
$s\in\{(1,1),(-1,-1)\}$ whilst otherwise there is no restriction on $s$
in order to have 
\[
    \sum_{(j,i)\in \sfP_\uQ(\bx,\bx)} \!\!\!d_2(s_i,(s^\chi)_j)=0. 
\]
\medskip
    
 (c) (i)  We claim that for  $(\bx,s)\in\Adm^*(\uQ)$ we have
    $\widebar{M}^o_{(\bx,s)}\in\tIrrind(\uQ)$ if and only if
    $(\bx,s)\in\Adm^*_\tau(\uQ)$.

 In fact, suppose first $\widebar{M}^o_{(\bx,s)}\in\tIrrind(\Ka\uQ)$.  Since
 $\Ka\uQ$ is tame, we must have by Proposition~\ref{prop:E-tame}~(a) and
 Part~(a) of our theorem
 \[
0=e_{\Ka\uQ}(\widebar{M}^o_{(\bx,s)},\widebar{M}^o_{(\bx,s)})=e_\uQ((\bx,s),(\bx,s)),
\]
and thus by definition $(\bx,s)\in\Adm^*_\tau(\uQ)$.

Suppose next that $(\bx,s)\in\Adm^*_\tau(\uQ)$.  If $s\not\in\{\ast, (*,*)\}$,
we have $C(\bx,s)=\{X\}$ for a one-dimensional $A_\bx$-module $X$, 
and thus by definition
$\overline{M}^o_{(\bx,s)}=\overline{\cO_{M^o_\bx(X)}}$.  
By (a) we have in this case
$E_{\Ka\uQ}(M^o_\bx(X))=\frac{1}{2}e_\uQ((\bx,s),(\bx,s))=0$ for the
indecomposable $\Ka\uQ$-module $M_\bx^o(X)$. 
Thus, $\overline{M}^0_{(\bx,s)}$ is a generically $\tau$-regular indecomposable
irreducible component with a dense orbit by Proposition~\ref{prop:E-tame}~(b).
On the other hand, for $s\in\{*, (*,*)\}$, we have $\bx\in\Adm_b(\uQ)$ and  the set $C(\bx,s)$ consists of infinitely many isomorphism classes of simple $A_\bx$-modules $X$ with $X^\chi\cong X$.  
Moreover, we have $\sfP_\uQ(\bx,\bx)=\emptyset$ by the discussion in the proof of (b).
On the other hand, it follows then from Corollary~\ref{cor:DTr} that 
$\tau_{\Ka\uQ} M^o_\bx(X)\cong M^o_\bx(X)$ is an indecomposable $\Ka\uQ$-module for all $X\in C(\bx,s)$. 
Thus, we have
\[
  \dim\End_{\Ka\uQ}(M^o_\bx(X))=\dim\Hom_{\Ka\uQ}(M^o_\bx(X), \tau_{\Ka\uQ}M^o_\bx(X))
  =\dim\Hom_{A_\bx}(X,X)=1,  
\]
by the formula for the E-invariant in Proposition~\ref{E-inv}.
Thus, for $(\bx,s)\in\Adm^*_\tau(\uQ)$ with $s\in\{*, (*,*)\}$ the curve
$\cup_{X\in C(\bx,s)} M^o_\bx(X)$ intersects infinitely many orbits of bricks and
$\widebar{M}^o_{(\bx,s)}\in\tIrrind(\uQ)$ by Proposition~\ref{prop:E-tame}~(c).
\medskip 
  
(ii) In view of Remark~\ref{rem:e-comb}~(1), we conclude that the restriction  
$\overline{M}^o_?\df ([\Adm_*^\tau(\uQ)]/\simeq ) \ra\ \tIrrind(\Ka\uQ)$
is well-defined and injective. 
\medskip

(iii) For the surjectivity we observe first, that for each $\bx\in\Adm(\uQ)$
the set $\cup_{s\in S(\bx)} C(\bx,s)$ covers precisely all isomorphism classes of  \emph{simple} $A_\bx$-module.  
 If $X\in A_\bx\ind$ is not simple, we must have $\bx\in\Adm_b(\uQ)$.   
 It is not difficult to deduce, with the help of Appendix~\ref{ssec:infdih},    
 that for such $X$ one of the following two possibilities occurs:
    \begin{itemize}
    \item[(i)] $X\not\cong X^\chi$ and $\dim\Hom_{A_\bx}(X,X^\chi)\geq 1$.
    \item[(ii)] $X\cong X^\chi$ and $\dim\Hom_{A_\bx}(X,X)\geq 2$.
    \end{itemize}
Thus, by Proposition~\ref{E-inv} we have in both cases
$E_{\Ka\uQ}(M^0_\bx(X), M^o_\bx(X))\neq 0$.  
In view of Proposition~\ref{prop:E-tame}~(d) this implies,  that
    $\overline{\cO_{M^o_\bx(X)}}$ can't be of the form $Z\in\tIrrind(\Ka\uQ)$ with
    $c(Z)=0$.   On the other hand, we have in case (i)
    $\tau_{\Ka\uQ}M^0_\bx(X)\not\cong M^0_\bx(X)$ by Corollary~\ref{cor:DTr}, and
    $E(M^o_\bx(X), M^o_\bx(X))=\dim\End_{\Ka\uQ}(M^o_\bx(X))\geq 2$ in case (ii).
    Thus,
    $M_\bx^o(X)$ does not belong to any of the curves which appear in the
    characterization of the $Z\in\tIrrind(\Ka\uQ)$ with $c(Z)=1$ in
    Proposition~\ref{prop:E-tame}. We conclude, by the same proposition and
    the classification result~Theorem~\ref{Thm:classif},  that each 
    $Z\in\tIrrind(\Ka\uQ)$    is of the form
    $\overline{M}^o_\bx(\bx,s)$ for some $(\bx,s)\in\Adm^*(\uQ)$.
\end{proof}

\appendix

\section{The algebra \texorpdfstring{$A_\bw$}{Aw} for symmetric bands}
\label{ssec:infdih}
For a symmetric band $\bw$ let us denote the algebra $A_\bw$ by
$R:=\Ka\ebrace{S,T}/\ebrace{S^2-1, T^2-1}$. It is
isomorphic to the group algebra of the infinite dihedral group.
$R$ is a tame hereditary order, which is closely related to the
classification of
the regular modules over  hereditary algebras of type $\sftD$. Recall, that
we denote by $\chi$ the automorphism of $R$, which is defined by
$\chi(S)=-S$ and $\chi(T)=-T$, and $\iota$ is the automorphism which swaps
$S$ and $T$. Clearly, $\iota\circ\chi=\chi\circ\iota$.

The  indecomposable representations of $R$ have been classified independently by several authors, see for example~\cite{BB81} and~\cite{D86}. 
See also~\cite[Rem.~3.9]{BTCB24} for a larger class of algebras.
We present this classification  with a notation which is compatible with our setup.

The somehow more complicated situation for $\kar(\Ka)=2$ has been studied
previously by Bondarenko~\cite{Bo75} and Ringel~\cite{Ri75}. 

Since the infinite dihedral group is the semidirect product of the infinite
cyclic group and a cyclic group of order two, $R$ is isomorphic to
the skew group algebra  $\Ka[X,X^{-1}]\ebrace{\sig}$ of the cyclic group $\ebrace{\sig}$ of order 2 over  $\Ka[X,X^{-1}]$,
where $\sig$ acts on $\Ka[X,X^{-1}]$ via the involution 
that swaps $X$ and $X^{-1}$. This isomorphism sends
$X\otimes 1$ to $ST$ and $1\otimes\sig$ to $T$ (and $X\otimes\sig$ to $S$).

We denote by $V_{m,s}$ the $m$-dimensional representation of $\Ka[X,X^{-1}]$
on which $X$ acts by the $m\times m$ Jordan block $J_m(s)$ for some
$s\in\Ka\setminus\{0\}$.
Then we have $V_{m,s}^\sig\cong V_{m,s^{-1}}$ and thus $V_{m,s}^\sig\cong V_{m,s}$
if and only if $s\in\{-1,1\}$. 
Similarly, we denote  by $\widetilde{V}_{m,s}$ the $2m$-dimensional
representation of
$R$ with
\[
    \widetilde{V}_{m,s}(S)=\begin{pmatrix} 0 &J_m(s)\\ J_m(s)^{-1}& 0\end{pmatrix}
    \qquad\text{and}\qquad
    \widetilde{V}_{m,s}(T)=\begin{pmatrix} 0 & E_m \\ E_m & 0\end{pmatrix}.
\]
On the other hand, with
\[
  (\tilde{S}_{m,\pm 1})_{i,j} := \begin{cases}
    (-1)^{i+1} (\pm 1)^{i+j}  \binom{i}{j} & \text{if } 1\leq j\leq i\leq m\\
    0                                 & \text{if } 1\leq i< j\leq m,
    \end{cases}
\]
we observe that
\[
  \tilde{S}_{m,\pm 1}^2=E_m\quad\text{and}\quad
  \tilde{S}_{m,\pm 1}\cdot J_m(\pm 1)=J_m(\pm 1)^{-1} \cdot \tilde{S}_{m,\pm 1}.
\]
We conclude that with
\[
  W_{m,\pm 1}(S):=\tilde{S}_{m,\pm 1}\cdot J_m(\pm 1)\quad{and}\quad
  W_{m,\pm 1}(T):=\tilde{S}_{m,\pm 1}
\]
we get for each $m\in\NN_0$ two indecomposable, $m$-dimensional representations
of $R$, which are not isomorphic. In particular, we have
$W_{1,\pm 1}(S)=\pm 1$ and $W_{1,\pm 1}(T)=1$ whilst $W_{1,\pm 1}^\chi(S)=\mp 1$ and
$W_{1,\pm 1}^\chi(T)=-1$.

From the above considerations we obtain the
following classification result,  which seems to be folklore.

\begin{Prop} \label{lem:repR}
  Let $\Ka$ be an algebraically closed field with $\kar(\Ka)\neq 2$. 
  With the above notations, for each $s\in\Ka\setminus\{0\}$ and
  $m\in\ZZ_{>0}$ holds
  \[
    R\otimes_{\Ka[X,X^{-1}]} V_{m,s}\cong
    \begin{cases} \widetilde{V}_{m,s} &\text{if } s\not\in\{-1,1\},\\
      W_{m,s}\oplus W_{m,s}^\chi &\text{if } s\in\{1,-1\}.
    \end{cases} \]
  The modules $\widetilde{V}_{m,s}$ with $s\in\Ka\setminus\{-1,0,1\}$
  and $W_{m,\pm 1}, W_{m,\pm 1}^\chi$ present a complete list of the indecomposable
  representations of $R$, with $\widetilde{V}_{m,s}\cong\widetilde{V}_{m,s^{-1}}$
  the only isomorphism between them.  Moreover, we have
  $\widetilde{V}_{m,s}^\chi\cong\widetilde{V}_{m,s}\cong\widetilde{V}_{m,s}^\iota$ for  all $s\in\Ka\setminus\{-1,0,1\}$, and  $W_{m,1}^\iota\cong W_{m,1}$,
  $W_{m,-1}^\iota\cong W_{m,-1}^\chi$ for all $m$.  
\end{Prop}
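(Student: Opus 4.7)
The plan is to leverage the isomorphism $R\cong \Ka[X,X^{-1}]\#\ebrace{\sig}$, where $\sig$ is the order-two automorphism swapping $X\leftrightarrow X^{-1}$, via $ST\leftrightarrow X$ and $T\leftrightarrow \sig$ (so $S$ corresponds to $X\sig$). Because $\kar(\Ka)\neq 2$, the group algebra $\Ka\ebrace{\sig}$ is semisimple with idempotents $e_\pm=(1\pm\sig)/2$, and so one has a clean dictionary between finite-dimensional $R$-modules and finite-dimensional $\Ka[X,X^{-1}]$-modules equipped with compatible $\sig$-structure. I will use this dictionary throughout.

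First I would recall that, since $\Ka$ is algebraically closed, the finite-dimensional indecomposable $\Ka[X,X^{-1}]$-modules are exactly the Jordan blocks $V_{m,s}$ with $m\geq 1$, $s\in\Ka^*$; a direct computation shows $V_{m,s}^\sig\cong V_{m,s^{-1}}$, hence the self-dual locus is precisely $s\in\{-1,1\}$. Next I would apply the standard dichotomy for induction to a skew group algebra with $|G|$ invertible (a direct consequence of semisimplicity of $\Ka\ebrace{\sig}$): if $V\not\cong V^\sig$, then $R\otimes_{\Ka[X,X^{-1}]} V$ is indecomposable and $R\otimes V\cong R\otimes V^\sig$; if $V\cong V^\sig$, then $R\otimes V$ splits as a direct sum of two non-isomorphic indecomposable $R$-modules, exchanged by the twist $\chi$ (which corresponds on the skew group side to $\sig\mapsto -\sig$). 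For $s\notin\{-1,1\}$ the induced module $R\otimes V_{m,s}$, computed with respect to the basis $\{1,T\}$ of $R$ as a free right $\Ka[X,X^{-1}]$-module, is visibly the $2m$-dimensional module of block-matrix form in the statement, hence is $\widetilde{V}_{m,s}$; and $\widetilde{V}_{m,s}\cong\widetilde{V}_{m,s^{-1}}$ as induction is $\sig$-invariant.

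The hard part, which I expect to be the main obstacle, is the case $s\in\{-1,1\}$: one must exhibit an explicit $\sig$-equivariant structure on $V_{m,\pm 1}$ realizing $V_{m,\pm 1}\cong V_{m,\pm 1}^\sig$, and my candidate is the matrix $\tilde{S}_{m,\pm 1}$. The two required identities $\tilde{S}_{m,\pm 1}^2=E_m$ and $\tilde{S}_{m,\pm 1}\cdot J_m(\pm 1)=J_m(\pm 1)^{-1}\cdot\tilde{S}_{m,\pm 1}$ are combinatorial identities in signed binomials; after absorbing the $\pm 1$ signs they reduce to the expansion of $(I+N)^{-1}=\sum_{k\geq 0}(-1)^k N^k$ for the nilpotent $N=J_m(1)-I$, which I would verify either by induction on $m$ or by interpreting $\tilde{S}_{m,1}$ as the matrix of the operator $e_i\mapsto (-1)^{i+1}(I-N)^{i-1}e_1$ in a suitable basis. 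Once $\tilde{S}_{m,\pm 1}$ is established, setting $T\mapsto \tilde{S}_{m,\pm 1}$ and $S=XT\mapsto \tilde{S}_{m,\pm 1}J_m(\pm 1)$ gives a well-defined $R$-structure $W_{m,\pm 1}$ (the relation $S^2=1$ follows immediately from the intertwining identity), while the opposite sign choice for $T$ yields $W_{m,\pm 1}^\chi$; splitting $R\otimes V_{m,\pm 1}$ along the idempotents $e_\pm$ then produces the asserted decomposition.

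Finally, the statements on twists are bookkeeping once the above is in place. For $\widetilde{V}_{m,s}$ with $s\notin\{-1,0,1\}$, conjugation by $\operatorname{diag}(E_m,-E_m)$ visibly negates both $S$ and $T$, giving $\widetilde{V}_{m,s}^\chi\cong\widetilde{V}_{m,s}$; for $\iota$ I would note that $\iota$ restricts to $X\mapsto X^{-1}$ on $\Ka[X,X^{-1}]$, so $\widetilde{V}_{m,s}^\iota\mid_{\Ka[X,X^{-1}]}\cong V_{m,s^{-1}}\oplus V_{m,s}$, which coincides with the restriction of $\widetilde{V}_{m,s}$, and Clifford uniqueness forces $\widetilde{V}_{m,s}^\iota\cong\widetilde{V}_{m,s}$. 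For $W_{m,\pm 1}$ the claims are checked at the level of restrictions and signs: $W_{m,1}^\iota$ has $X=ST$ acting as $J_m(1)$ rather than $J_m(1)^{-1}$ while the involution $T$ retains the same diagonal sign pattern as $\tilde{S}_{m,1}$, so $W_{m,1}^\iota\cong W_{m,1}$, whereas in the case $s=-1$ the sign of $J_m(-1)$ obstructs this and forces $W_{m,-1}^\iota\cong W_{m,-1}^\chi$; the base case $m=1$ (where $W_{1,-1}(S,T)=(-1,1)$, $W_{1,-1}^\iota(S,T)=(1,-1)=W_{1,-1}^\chi(S,T)$) already displays the mechanism, and the general $m$ reduces to it by the same conjugation bookkeeping.
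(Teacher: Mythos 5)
Your proposal is correct and follows essentially the same route as the paper: the appendix likewise passes to the skew group algebra $\Ka[X,X^{-1}]\langle\sigma\rangle$ with $\sigma(X)=X^{-1}$, uses $V_{m,s}^\sigma\cong V_{m,s^{-1}}$ together with the standard Clifford/induction dichotomy for $|G|$ invertible (induction indecomposable when $s\neq\pm1$, splitting into the two sign-twists $W_{m,\pm1}$, $W_{m,\pm1}^\chi$ when $s=\pm1$), exhibits the explicit matrices $\tilde{S}_{m,\pm1}$, and then invokes the classification as folklore with references, so your sketch is at or above the paper's own level of detail. The only spot worth tightening is the $\iota$-twist claim for $W_{m,\pm1}$ at general $m$: ``same diagonal sign pattern'' or reduction to $m=1$ is not by itself an isomorphism invariant (and $\operatorname{tr}T$ vanishes for even $m$), but since $W_{m,s}^{\iota}$ must be one of $W_{m,s}$, $W_{m,s}^{\chi}$, it suffices to compare a single invariant such as the eigenvalue of $T$ on the one-dimensional space $\ker(X-s)$, which gives $(-1)^{m+1}$ for $W_{m,s}$, $s(-1)^{m+1}$ for $W_{m,s}^{\iota}$ and $(-1)^{m}$ for $W_{m,s}^{\chi}$, confirming $W_{m,1}^{\iota}\cong W_{m,1}$ and $W_{m,-1}^{\iota}\cong W_{m,-1}^{\chi}$.
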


\section{Proof of Proposition~\ref{prp:Adm}}
\label{sec:PfAdm}
  Let $\bw=w_0w_1\cdots w_l\in\St(\uQ)\cup\pBa'(\uQ)$. We have to distinguish
  several cases, with each one requiring slightly different arguments.

 (1)  $\bw$ is an asymmetric string.  Following
 Section~\ref{ssec:huQA}, the word $A(\bw)=\tw_0\tw_1\cdots\tw_l$
  is  a string of type $(u,u)$ for $\huQ$, and
  $\overline{A(\bw)}=\bw$. 

  First, suppose that $A(\bw)$ is not admissible. Then we may assume without
  restriction that for some $j$ we have $F(\tilde{w_j})=\eps^*$ is special and
  $A(\bw)_{[j-1]}^{-1}\prec A(\bw)_{[j+1]}$ whilst $\tilde{w}_j=\eps$ is a
  direct letter. By the definition of $A(\bw)$ this means that
  $\bw_{[j-1]}^{-1}>\bw_{[j+1]}$.  These two inequalities can hold only at the
  same time if
\begin{equation} \label{eq:facAw}
    A(\bw)=\tby\eta^{-1}\tbx^{-1}\eps\tbx\eta^{-1}\tbz
\end{equation}
with certain words $\tbx,\tby,\tbz$ for $\huQ$. Moreover, with $i=j-l(\tbx)-1$
and $k=j+l(\tbx)$ the letters $F(\tw_i)=\eta^*=F(\tw_k)$
are necessarily special.  

In fact, we have then
\begin{align*}
A(\bw)_{[j-1]}^{-1}=\tbx\eta\tby^{-1}&\prec A(\bw)_{[j+1]}=\tbx\eta\tbz\text{ and}\\
               \bw_{[j-1]}^{-1}&>\bw^{[j+1]} \text{ implies}\\
  \by^{-1}=F(\tby^{-1}) &>F(\tbz)=\bz.
\end{align*}
On the other hand, in the
following chain of inequalities
  \[
   \by^{-1}<\bx^{-1}\eps^*\bx\eta^*\bz<\bx^{-1}\eps^*\bx\eta^*\by^{-1}<\bx
  \]
 the first and the third one hold because of the construction of $A(\bw)$,
 which is codified in the orientation of the letters $\tw_i=\eta^{-1}$ and
 $\tw_k=\eta^{-1}$.
 The second inequality follows trivially from $\by^{-1}>\bz$. A contradiction.

 Next, suppose  $\tbw'=\tw'_0\tw'_1\cdots\tw'_l\neq A(\bw)$ is an admissible
 string with $\overline{\tbw'}=\bw$. Then there exists $j\in\{1,2,\ldots,l-1\}$
 with $\tw'_j\neq\tw_j$.  In particular, $F(\tw'_j)=F(\tw_j)$ is a special
 letter $\eps^*$, and we may assume that $\Del_\bw(j)\leq\Del_\bw(j')$ for
 all $j'$ with $\tw'_{j'}\neq\tw_{j'}$. See Section~\ref{ssec:OrdStBa} for
 the definition of $\Del_\bw(j)$.
 We may assume further, without loss of generality, that
  \[
    (\tbw'_{[j-1]})^{-1}\prec(\tbw')^{[j+1]}\text{ and }
    \bw_{[j-1]}^{-1}>\bw^{[j+1]}.
  \]r
Thus, we have a factorization as in~\eqref{eq:facAw}, which implies  
$\Del_\bw(j)>\Del_{\tbw'}(j)$. With $d':=l(\bx)=\Del_{\tbw'}(j)$
and $i:=j-d'-1$ and $k:=j+d'+1$ we have here $\tw'_i=\eta^{-1}=\tw'_k$.
We have also the corresponding factorization
  \[
    \bw=\by' a \bu^{-1}\eta^*\bx^{-1}\eps^*\bx\eta^*\bu b\bz',
  \]
where, with $d:=l(\bx)+1+l(\bu)=\Del_\bw(j)$,   
the letters $w_{j-d-1}=a$ and $w_{j+d+1}=b$ are ordinary direct ones.

If $\Del_\bw(i)<l(\bu)$ and $\Del_\bw(k)<l(\bu)<d$, then by our hypothesis $\tw'_i=\tw_i$
and $\tw'_k=\tw_k$.   However, by the above factorization of $\bw$, this means
that $\tw_i^{-1}=\tw_k$, in contradiction to $\tbw'_i=\tbw'_k=\eta^{-1}$.

Thus, due to the position of the direct letters $w_{j-d-1}=a$ and 
$w_{j+d+1}=b$, we see that either $\Del_\bw(i)=l(u)$ or $\Del_\bw(k)=l(u)<d$. This forces in the first case $\tw'_{i}=\eta$ and in the second case 
$\tw'_k=\eta$. A contradiction.
 
(2) $\bw=\eps_p^*\bv\eps_q^*\bv^{-1}$ is a symmetric band with
$\bv=w_1w_2\cdots w_{n-1}$ and $2n=l+1$.  In this case
$A(\bw)=\tw_0\tw_1\cdots\tw_n\in\St(\huQ)$ is of type $(p,p)$ with
$\tw_0=\II_{p,-1}^{-1}$ and $\tw_n=\II_{q,-1}$. By construction,
$F(A(\bw))F(\tw_{n-1}^{-1}\cdots\tw_2^{-1}\tw_1^{-1})=\hat{F}(A(\bw))=\bw\in\pBa$,
so that the
second condition for $A(\bw)$ to be an admissible string is fulfilled.

Let us now suppose that $A(\bw)$ is not admissible. Without loss of generality
we have then for some $j\in\{1,2,\ldots,n-1\}$ the following:
\begin{itemize}
\item
  $\tw_j=\eps$ with $F(\eps)=\eps^*$ a special letter and thus
  $\bw_{[j-1]}^{-1}(\bw^{[j+1]})>\bw_{[j+1]}\bw^{[j-1]}$,
\item  
  $A(\bw)_{[j-1]}^{-1}<A(\bw)^{[j+1]}$.
\end{itemize}  
Similarly, in the case~(1), this means that we have a factorization as
in~\eqref{eq:facAw}. In particular, we have again $F(\tw_i)=F(\tw_j)=\eta^*$
special letters.

Note however, that $\by^{-1}=F(\tby)^{-1}$ and
$\bz=F(\tby)$ are in this situation \emph{not} right inextensible, and
thus the two words might be incomparable.  However, we have the following
inequalities and factorizations from the construction of $\uH(\bw)$ for
a symmetric band, if we take into account the orientation of the letters
$\tbw_i, \tbw_j$ and $\tbw_k$:
\begin{alignat}{4}
  \by^{-1}(\bw^{[i+1]})^{-1} &=\bw_{[i-1]}^{-1}(\bw^{[i+1]})^{-1}  &&<\ &
  \bw^{[i+1]}\bw_{[i-1]}     &=\bx^{-1}\eps^*\bx\eta^*\bz\bv^{-1}\bw_{[i-1]}
  \label{eq:Ad2}\\
  \bx\eta^*\bz\bv^{-1}\bw_{[j-1]} &=\bw^{[j+1]}\bw_{[j-1]}&&<\ &
  \bw_{[j-1]}^{-1}(\bw^{[j+1]})^{-1}&=\bx\eta\by^{-1}(\bw^{[j+1]})^{-1} \label{eq:Ad3}\\
  \bx^{-1}\eps^*\bx\eta^*\by^{-1}(\bw^{[k+1]})^{-1}&=\bw_{[k-1]}^{-1}(\bw^{[k+1]})^{-1}
  &&<\ &
  \bw^{[k+1]}\bw_{[k-1]}&=\bz\bv^{-1}\bw_{[k-1]}.\label{eq:Ad4}
\end{alignat}  
Since $i<j<k$ we conclude from~\eqref{eq:Ad3} that
\begin{equation} \label{eq:Ad5}
  \bz\bv^{-1}\bw_{[k-1]}<\by^{-1}(\bw^{[i+1]})^{-1},
\end{equation}  
see end of Section~\ref{ssec:lex}.
From this we obtain with~\eqref{eq:Ad2}
and~\ref{eq:Ad4} the following chain of inequalities
\[
  \bx^{-1}\eps^*\bx\eta^*\by^{-1}(\bw^{[k+1]})^{-1}<\bz\bv^{-1}\bw_{[k-1]}<
  \by^{-1}(\bw^{[i+1]})^{-1}<\bx^{-1}\eps^*\bx\eta^*\bz\bv^{-1}\bw_{[i-1]}.
\]  
Again, since $i<j<k$, this implies
$\by^{-1}(\bw^{[i+1]})^{-1}<\bz\bv^{-1}\bw_{[k-1]}$, contradicting~\eqref{eq:Ad5}.

Next, suppose $\tbw'=\tw'_0\tw'_1\cdots\tw'_n\neq A(\bw)$ is an admissible
string of type $(p,p)$ with $\hat{F}_A(\tbw')=\bw$. Then there exists
$j\in\{1,2,\ldots,n-1\}$ with $\tw'_j\neq\tw_j$. In particular
$F(\tw'_j)=F(\tw_j)$ is a special letter $\eps^*$, and we may assume that
$\Del_\bw(j)\leq\Del_\bw(j')$ for all $j'$ with $\tw'_{j'}\neq\tw_{j'}$.
See Section~\ref{ssec:OrdStBa} for the definition of $\Del_\bw(j)$.
We may assume further without loss of
generality that
  \[
    (\tbw'_{[j-1]})^{-1}\prec(\tbw')^{[j+1]}\text{ and }
    \bw_{[j-1]}^{-1}(\bw^{[j-1]})^{-1}>\bw^{[j+1]}\bw_{[j-1]}.
  \]
Thus we have a factorization as in~\eqref{eq:facAw}, which implies  
$\Del_\bw(j)>\Del_{\tbw'}(j)$. With $d':=l(\tbx)=\Del_{\tbw'}(j)$
and $i:=j-d'-1$ and $k:=j+d'+1$
we have here $\tw'_i=\eta^{-1}=\tw'_k$ again.
We have also the corresponding factorization
\[
 w_{-l+j-1}\cdots w_{-1} w_0\cdots w_lw_{l+1}\cdots w_{l+j-1}= \bw^{[j+1]}\bw\bw_{[j-1]}=\by'\,a\,\bu^{-1}\eta^*\bx^{-1}\eps^*\bx\eta^*\bu\,b\,\bz',
\]
where, with $d:=l(\bu)+1+l(\bx)=\Del_\bw(j)$, the letters
$w_{j-d-1}=a$ and $w_{j+d+1}=b$ ordinary direct ones.

If $\Del_\bw(i)<l(\bu)$ and $\Del_\bw(k)<l(\bu)<d$, then by our hypothesis $\tw'_i=\tw_i$
and $\tw'_k=\tw_k$.   However, by the above factorization of $\bw^{[j+1]}\bw\bw_{[j-1]}$ this means that $\tw_i^{-1}=\tw_k$, in contradiction to $\tbw'_i=\tbw'_k=\eta^{-1}$.

Thus, due to the position of the direct letters $w_{j-d-1}=a$ and $w_{j+d+1}=b$
we see that either $\Del_\bw(i)=l(u)$ or $\Del_\bw(k)=l(u)<d$. This forces
in the first case $\tw'_{i}=\eta$ and in the second case $\tw'_k=\eta$.
A contradiction.

(3)  It should now be obvious how to modify the argument for the remaining
two cases when $\bw$ is either a symmetric string or an asymmetric band.

\subsection*{Acknowledgment}
The author acknowledges partial support from UNAM DGAPA-PAPIIT grant IN116723 (2023-2025). He also wishes to thank an anonymous referee for very careful reading and constructive criticism.

\end{document}